\documentclass{amsart}

\usepackage{paper_style}
\usepackage[csfont, paper]{std_math}
\usepackage{local}

\IfFileExists{./\foldername/abstract.tex}{}{}

\newif\ifourcomments
\ourcommentstrue

\ifourcomments
\newcommand{\kanote}[1]{\marginpar{\color{blue}\tiny [KA] #1}}
\newcommandx{\kaadd}[2][2=]{{\color{blue}{\tiny [KA]} #1} \marginpar{\color{magenta} [KA] #2}}
\newcommand{\manote}[1]{\marginpar{\color{magenta}\tiny [MA] #1}}
\newcommand{\maadd}[1]{{\color{magenta}{\tiny [MA]} #1} \marginpar{\color{magenta} [MA]}}
\newcommand{\awnote}[1]{\marginpar{\color{cyan}\tiny [AW] #1}}
\newcommand{\awadd}[1]{{\color{cyan}{ #1} \marginpar{\color{cyan} [AW]}}}

\else
\newcommand{\kanote}[1]{}
\newcommandx{\kaadd}[2][2=]{}
\newcommand{\manote}[1]{}
\newcommand{\maadd}[1]{}
\newcommand{\awnote}[1]{}
\newcommand{\awadd}[1]{}
\fi

\hypersetup{pdfauthor={Menny Aka, Konstantin Andritsch, Andreas Wieser}
	,pdftitle=Planes in quadratic 4-space
	,urlcolor=blue
	,citecolor=red
	,linkcolor=OliveGreen
	,colorlinks=true
}
\begin{document}
		
\title[Planes in quadratic $4$-space and associated shapes of lattices]{Planes in quadratic $4$-space and \\ associated shapes of lattices}

\author{Menny Aka}
\address[M.A.]{Department of Mathematics, ETH Zürich, Zürich, Switzerland}
\email{menny.akka@math.ethz.ch}

\author{Konstantin Andritsch}
\address[K.A.]{Department of Mathematics, ETH Zürich, Zürich, Switzerland}
\email{konstantin.andritsch@math.ethz.ch}

\author{Andreas Wieser}
\address[A.W.]{Institute for Advanced Study, 1 Einstein Drive, Princeton, NJ 08540, USA}
\email{awieser@ias.edu}

\thanks{M.A.~and K.A.~acknowledge the support of the SNF Grant 10003145, Equidistribution in Number Theory. 
A.W.~acknowledges the support of ERC 2020 grant HomDyn (grant no.~833423) and SNF grant 217944.
This material is based upon
work supported by a grant from the Institute for Advanced Study School of Mathematics.
}

\subjclass[2020]{37A17, 11E16, 11G15, 11H55}
\keywords{Equidistribution, quadratic forms, higher-rank actions}
\date{\today.}
    
    \vspace{-0.5cm}
    

    \begin{abstract}
        Let $Q=-x_1^2-x_2^2-x_3^2+x_4^2$ be the standard signature $(1,3)$ quadratic form.
        To each non-degenerate rational plane $L$ in the four-dimensional quadratic space $(\mathbb{Q}^4,Q)$ we can naturally attach a periodic geodesic on the Bianchi orbifold $\mathrm{SL}_2(\mathbb{Z}[i])\backslash \mathbb{H}^3$ which records the position of $L$ in the Grassmannian up to integer rotations.
        Moreover, each such plane $L$ defines a CM point and a periodic geodesic on the modular curve through restriction of $Q$ to $L$ and its orthogonal complement.
        Lastly, the local isomorphism between $\mathrm{SO}_{1,3}(\mathbb{R})$ and $\mathrm{SL}_2(\mathbb{C})$ gives rise to a further periodic geodesic on the Bianchi orbifold.
        
        In this article, we exhibit a natural coupling of all the above objects and prove simultaneous equidistribution under a Linnik-type splitting condition.
        The main ingredient is the classification of joinings of higher-rank diagonalizable actions on homogeneous spaces due to Einsiedler and Lindenstrauss.
    \end{abstract}

	\maketitle


    \section{Introduction}\label{sec:introduction}
    
    Duke's theorem and its extensions form a family of equidistribution results describing, for example, the distribution of periodic geodesics or complex multiplication (CM) points on the modular surface, and of integer points on large spheres. Distribution properties of these objects were already studied by Linnik and his school \cite{Linnik1968ErgodicPO}.
    Recent advances in our understanding of higher-rank diagonalizable actions on homogeneous spaces due to Einsiedler and Lindenstrauss~\cite{einsiedler_joinings_2019} opened the door to the study of natural coupled-distribution problems of the above objects, or other arithmetic objects of interest \cite{AkaEinsiedlerShapira-1in3,aka_planes_2021,ALMW,BlomerBrumley}.
    Most prominently, Khayutin~\cite{Khayutin-mixing} used these advances to make significant progress towards the mixing conjecture of Michel and Venkatesh~\cite{MichelVenkateshICM}.
    In later developments, analytic methods \textemdash~see the work of Blomer, Brumley~\cite{BlomerBrumley}, Blomer, Brumley, Khayutin~\cite{BlomerBrumleyKhayutin}, and Blomer, Brumley, {Radiwi\l\l} \cite{BlomerBrumleyRadiwill} \textemdash~have established important equidistribution results in this context under the increasingly weaker assumptions on zeros of certain $L$-functions.
    
    This article studies two-dimensional rational subspaces in $\QQ^4$ equipped with a quaternary quadratic form, which gives rise to a notion of discriminant on these subspaces, and to various natural objects associated to these subspaces. In a precursor of this work, two of us (M.A.~and A.W.) with Einsiedler~\cite{aka_planes_2021} studied two-dimensional rational subspaces in $\QQ^4$ equipped with the quadratic form $x_1^2+x_2^2+x_3^2+x_4^2$. In this case,
    to each two-dimensional rational (oriented) subspace $L$ (henceforth called a \emph{plane}) one can associate four complex multiplication points (CM points) in the modular curve $\SL_2(\ZZ)\backslash \HypSpace^2$. 
    Namely, two of these CM points describe the shape of the lattice $L \cap \ZZ^4$ in $L$ and its orthogonal complement $L^\perp \cap \ZZ^4$ in $L^\perp$. 
    Equivalently, they are (the equivalence classes of) the binary forms obtained by restricting $x_1^2+x_2^2+x_3^2+x_4^2$ to $L$ and $L^\perp$.
    The other two CM points arise from an accidental local isomorphism between $\SO_4$ and $\SO_3 \times \SO_3$.
    Geometrically, this isomorphism allows one to view, very explicitly, the affine variety of oriented two-dimensional subspaces as a product of two $2$-dimensional spheres. Under this isomorphism, each rational plane is parametrized by two vectors in $\ZZ^3$ in the ternary quadratic space $(\QQ^3,x^2+y^2+z^2)$, called Klein vectors in \cite{aka_planes_2021}.
    In particular, by taking the orthogonal complement of the Klein vectors, two further definite binary forms are associated to a rational plane.
    Conjecturally, the \enquote{coupled} tuples consisting of subspaces and their four associated CM points equidistribute in the appropriate product space (a Grassmannian times four copies of the modular curve) when the subspaces are varied with fixed discriminant and the discriminant goes to infinity.
    In \cite{aka_planes_2021}, a slightly weaker version of this is established using \cite{einsiedler_joinings_2019}.
    
    It was hinted at in \cite{aka_planes_2021} that these constructions can be extended to any quaternary space arising from the norm form on rational quaternion algebras. Indeed, in later work with Feller and Miller \cite{AFMW24}, M.A.~and A.W.~used this observation for the quadratic form $x_1x_4-x_2x_3$ arising from the rational quaternion algebra of rational $2\times 2$ matrices toward an application in low-dimensional topology. 
    
    The aim of this article is to study rational quadratic forms that do not arise from a rational quaternion algebra, e.g. the signature $(1,3)$ form $\Q = -x_1^2-x_2^2-x_3^2+x_4^2$ on $\QQ^4$. 
    There are two main novelties. 
    The first is the study of the distribution problem of rational planes $L$ and the binary forms arising from restricting $\Q$ to $L$ and $L^\perp$. 
    This already presents an interesting coupled distribution problem of classical arithmetic objects. 
    Explicitly, as explained below, it is a coupling between geodesics on the Bianchi orbifold, geodesics in the modular surface and CM points. 
    Already the individual equidistribution of these associated arithmetic objects is of interest, see e.g.~Theorem \ref{thm:PositionEquiIntro}. 
    The second novelty is a generalization of the Klein vector construction from \cite{aka_planes_2021} via Clifford algebras, which allows us to treat general quaternary quadratic spaces. 
    For $\Q$, this construction gives rise to an additional coupling with a geodesic in the Bianchi orbifold.
    
    We turn to describing our main results more precisely. For simplicity of exposition and for technical reasons, we limit ourselves to the quadratic space $(\QQ^4,\Q)$ where, as above, $\Q = -x_1^2-x_2^2-x_3^2+x_4^2$.
    It is useful to view this quadratic form as the determinant form on the subspace $\mathcal{H}$ of Hermitian matrices in $\Mat[2](\QQi)$ together with the coordinates
    \begin{align*}
    \begin{pmatrix}
    -x_3- x_4 & x_2 + ix_1 \\
    x_2 - ix_1 & x_3-x_4
    \end{pmatrix}.
    \end{align*}
    Note that $\SL_2(\QQi)$ acts on $\mathcal{H}$ via $g.x = gx g^{\mathsf{H}}$, where $g^{\mathsf{H}}$ denotes the hermitian involution (transpose conjugate) of $g\in \SL_2(\QQi)$.
    This action preserves the determinant. 
    Thus, this shows that $\SO_Q$ is, in fact, isogenous to $\Res{\QQi}(\SL_2)$ which is an \emph{outer} form of $\SL_2\times \SL_2$. This contrasts the situation here with the setting in~\cite{aka_planes_2021}.
    
    Consider now an oriented two-dimensional subspace $L \subset \QQ^4$.
    Its discriminant (with respect to $\Q$) \textemdash~denoted by $\disc(L)$ \textemdash~is the discriminant of the restriction of $\Q$ to the two-dimensional lattice $L \cap \ZZ^4$.
    We will simply say that $L$ is definite, indefinite etc, if $Q|_L$ is definite indefinite etc.
    The discriminant $\disc(L)$ is positive if $L$ is definite, negative if $L$ is indefinite, and zero if $L$ is degenerate.
    Since $Q$ has signature $(1,3)$, any definite two-dimensional subspace is negative definite.  
    For $D \geq 1$ we set
    \begin{align}\label{eq:Def-of-JD}
    \P_D = \{L: \disc(L) =D\}.
    \end{align}
    Note that $\P_D$ is acted upon by $\SO_Q(\ZZ)$ and, in particular, also by the infinite group $\SpinV(\ZZ)\simeq \SL_2(\ZZi)$. 
    An elementary argument shows that $\P_D$ is non-empty for every $D \geq 1$.
    Moreover, by finiteness of class numbers the set of orbits $\J_D = \SpinV(\ZZ) \backslash \P_D$ is finite.
    
    Suppose $\L$ is definite (if $L$ is indefinite, consider its orthogonal complement). 
    As explained above, we will associate to it several arithmetic objects. We start with the \enquote{position} of $\L$.  
    The \enquote{position} of $\L$ is contained in a proper open subset of the real affine variety of oriented two-dimensional real subspaces (as opposed to the case considered in \cite{aka_planes_2021}) and the stabilizer of a point under the $\SO_\Q(\RR)$-action is non-compact; this is an unfavorable situation. 
    Instead, by \enquote{dualizing} our setting, the position of $\L$ is captured by a periodic geodesic in the unit tangent bundle $\Tan(\Y_{\QQi})$ of the Bianchi orbifold
    \begin{align}\label{eq:Bianchiorbifold}
        \Y_{\QQi} \defeq \dquot{\SL_2(\ZZi)}{\SL_2(\CC)}{\SU_2(\CC)} = \lquot{\SL_2(\ZZi)}{\HypSpace^3}
    \end{align}
    associated to the ring of integers $\ZZi$. The base point of this geodesic is essentially given by any element conjugating the stabilizer of $\L$ into the full diagonal subgroup. Indeed, note that $\Y_{\QQi} \simeq \tlquot{\SL_2(\ZZi)}{\HypSpace^3}$ by acting with $\SL_2(\CC)$ on $\HypSpace^3$ by extending the Möbius transformation on its boundary $\partial\HypSpace^3 \simeq \CC$. The group $\SU_2(\CC)$ is the stabilizer of $o\defeq(0,1) \in\HypSpace^3 \simeq\partial\HypSpace^3\times\RR_{>0}\simeq\CC\times\RR_{>0}$. 
    Thus, the unit tangent bundle can be written as
    \[ \Tan(\Y_{\QQi}) \defeq \dquot{\SL_2(\ZZi)}{\SL_2(\CC)}{A_\CC^1}, \]
    considering that $A_\CC^1 = \{\operatorname{diag}(e^{it},e^{-it})\st t\in\RR\}\subseteq\SU_2(\CC)$ is the stabilizer of $(o,v_o)$, where $v_o$ is the unit tangent vector at $o$ \enquote{pointing upwards}. Geodesics in $\Tan(\Y_{\QQi})$ correspond to $A=\{\operatorname{diag}(e^{t},e^{-t})\st t\in\RR\}$-orbits. Moreover, notice that $AA_\CC^1 = A_\CC=\{\operatorname{diag}(a,a^{-1})\st a\in\CC\}$. 
    Here, we mention that the length of the periodic geodesic, in fact, only depends on the discriminant $\disc(\L)$ and that the geodesic itself only depends on the class of $\L$ under integral rotations. We refer to \S\ref{sec:shapes} for a completely explicit, albeit technical setup.

    \begin{theorem}[Duke-type theorem for periodic geodesics on a Bianchi orbifold]\label{thm:PositionEquiIntro}
        Let $\nu_D$ be the (finite) normalized sum of the length measures on all periodic geodesics associated to classes of subspaces in $\J_D$.
        Then $\nu_D$ converges to the uniform measure on $\Tan(\Y_{\QQi})$ as $D \to \infty$ along non-squares.
    \end{theorem}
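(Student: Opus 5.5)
We reduce the statement to an equidistribution result for packets of periodic torus orbits and then invoke Duke's theorem in the general form for quaternion or matrix algebras over $\QQ$, i.e.\ the results of Einsiedler--Lindenstrauss--Michel--Venkatesh for anisotropic and split tori in $\PGL_2$ over number fields. We restrict to the case $D$ a non-square, and describe the setup adelically.

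\emph{Step 1: the tori.} For a definite plane $L$, let $T_L\subset \SpinV\simeq\Res{\QQi}(\SL_2)$ be the pointwise stabilizer of $L^\perp$, or equivalently the identity component of the stabilizer of $L$ inside the stabilizer of the pair $(L,L^\perp)$. Concretely, the stabilizer of $L$ in $\SO_Q$ is isogenous to $\SO(Q|_L)\times\SO(Q|_{L^\perp})$. The factor $\SO(Q|_{L^\perp})$ is a $\QQ$-torus split over $K_D=\QQ(\sqrt{D})$ (real quadratic, since $Q|_{L^\perp}$ is indefinite), and $\SO(Q|_L)$ is split over $\QQ(\sqrt{-D})$. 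In $\Res{\QQi}(\SL_2)$ the preimage is a maximal torus $\Res{E/\QQi}^{(1)}\mathbb{G}_m$ with $E=\QQi(\sqrt{D})$. Its real points are conjugate to $A_\CC$, and the element conjugating $T_L(\RR)$ to $A_\CC$ gives the base point of the geodesic; its real-split part $A$ gives the geodesic flow, while the compact part $A_\CC^1$ is quotiented out. So the orbit $\SL_2(\ZZi)g_L A$ is periodic of length $\asymp\log\varepsilon_D$ and, after projecting along $A^1_\CC$, is exactly the orbit $\SL_2(\ZZi)g_LA_\CC$.

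\emph{Step 2: packets.} Using that $\SpinV$ acts on $\P_D$ with finitely many orbits, and that the genus is governed by the adelic torus, I would show that the union over $\J_D$ of these orbits is a finite union of projections of adelic torus orbits $[\mathbf{T}(\mathbb{A})g]$ in $\SL_2(\QQi)\backslash\SL_2(\mathbb{A}_{\QQi})/K_f$, with $K_f=\prod_v\SL_2(\mathcal O_v)$. More precisely, there are boundedly many genus classes, each a union of such packets, up to the spin-genus/class-group-index issues, which are handled by considering the image of $\mathbf{T}(\mathbb{A})$ in the finite abelian quotient. The measure $\nu_D$ is then a convex combination of pushforwards of Haar measures on these packets. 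I would check that the discriminant of the order $\mathrm{End}(L\cap\ZZ^4)$-data, i.e.\ of the quadratic order of $E$ attached to $L$, tends to infinity with $D$.

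\emph{Step 3: equidistribution.} I would apply the Duke-type theorem for torus orbits in $\mathrm{PGL}_2$ over the number field $\QQi$, namely that Haar measures on packets $[\mathbf T(\mathbb A)g]$ with discriminant $\to\infty$ equidistribute. This holds via subconvexity (Waldspurger/Michel--Venkatesh) for the base field $\QQi$ at the archimedean complex place, with the torus having $\RR$-rank one inside $\SL_2(\CC)$. Since $\SL_2$ is simply connected and strong approximation holds, the limit is the $\SL_2(\mathbb A)$-invariant measure, which projects to the uniform measure on $\Tan(\Y_{\QQi})$ after averaging over $A_\CC^1$. The non-square hypothesis ensures that $E$ is a field and the torus is anisotropic over $\QQ$, so the orbits are compact and periodic.

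The main obstacle is Step 2: matching $\SpinV(\ZZ)$-classes of planes exactly with packets, including the contribution of $\SO_Q(\ZZ)$ versus spin images and the component group of $\mathbf T(\mathbb A)$. The full packet might split into several $\SpinV(\ZZ)$-equivalence classes whose union is only a fixed proportion of a packet. I would handle this by noting that the discrepancy is a character twist on a bounded abelian quotient, and that twisted Weyl sums also tend to zero by the same subconvexity bound. This requires the characters involved to be nontrivial on the torus, which I would verify from the local structure.
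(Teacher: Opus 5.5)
Your Steps 1 and 2 essentially reproduce the paper's setup. The stabilizer $\mathbf{H}_L$ is the norm-one torus of $K=\QQ(i,\sqrt{D})$ over $\QQ(i)$, and $\nu_D$ is a convex combination of projections of the orbits $\mathbf{G}_1(\QQ)\mathbf{H}_L(\mathbb{A})k_L$. Your first description of $T_L$ as the pointwise stabilizer of $L^\perp$ is a slip: that torus has real points conjugate to $A^1_\CC$. You then use the full stabilizer, which is right. Also, the matching of $\mathrm{Spin}_Q(\ZZ)$-classes with packets, which you call the main obstacle, is in fact exact (Lemma~\ref{lem:disjoint-proj}, Proposition~\ref{prop:Projection}).

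\textbf{The gap is in Step~3.} The orbits you must equidistribute are orbits of a torus in $\SL_2$ over $\QQ(i)$, not $\mathrm{PGL}_2$-packets. The sentence ``since $\SL_2$ is simply connected and strong approximation holds, the limit is $\SL_2(\mathbb{A})$-invariant'' does not transfer a $\mathrm{PGL}_2$ result to them. Strong approximation only identifies the $K_f$-quotient with $\SL_2(\ZZ[i])\backslash\SL_2(\CC)$ and lets you pass from invariance to the uniform measure there; it does not produce the invariance.
\begin{itemize}
\item Inside the $\mathrm{PGL}_2$-packet of $K^\times/\QQ(i)^\times$, the $\SL_2$-orbit is the orbit of the image of the norm-one torus. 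By Hilbert 90, this image is the subgroup of squares.
\item By Lemma~\ref{lem:relationtoclassgroups}, this corresponds, up to a bounded kernel, to $\mathfrak{cl}_K^2$, of index $|\mathfrak{cl}_K[2]|$.
\item This index is unbounded: by genus theory it grows with the number of prime factors of $D$.
\end{itemize}
So in general each $\SL_2$-orbit is a vanishing proportion of the $\mathrm{PGL}_2$-packet, and equidistribution of the full packet says nothing about it.

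What is missing is exactly what the paper supplies.
\begin{itemize}
\item First, the bound $|\mathfrak{cl}_K[2]|=D^{o(1)}$ for the biquadratic field $K$ (the paper cites Halter-Koch). This gives the volume hypothesis $\mathrm{vol}\geq \mathrm{disc}^{1/2-\delta}$.
\item Second, a Duke theorem for such sub-packets (Theorem~\ref{thm:Duke}). You lift an $\SL_2$-automorphic form to the projective group via Labesse--Langlands, so the Weyl sum becomes $\int\varphi(t^2g)\,dt$ over the $\mathrm{PGL}_2$-packet. You then expand the indicator of the sub-packet, and of the image of $t\mapsto t^2$, into $O(D^{\delta'+\varepsilon})$ characters, including genus characters. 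Finally, you beat this count with the power saving $D^{-\eta}$ from Waldspurger's formula plus subconvexity.
\end{itemize}

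Your last paragraph contains the twisted-period idea, but attaches it to the wrong discrepancy and asserts that the abelian quotient is bounded. Since the quotient has unbounded size, showing that each twisted period tends to zero is not enough. You need the number of characters times the maximal twisted period to tend to zero, and that uses both the $2$-torsion bound and a genuine power saving. Your claim in Step~2 of boundedly many genus classes is likewise unjustified, though harmless: $\nu_D$ is a convex combination, so uniform equidistribution of each packet suffices.
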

    
    This theorem relates to a variant of a Linnik-type equidistribution problem over the quadratic field $\QQi$; see~\S\ref{subsec:EquiEachFac}.
    
    We will now \enquote{couple} the measure $\nu_D$ considered in Theorem \ref{thm:PositionEquiIntro} to other measures of similar kind using geometric constructions.
    So let $L$ be, again, an oriented two-dimensional definite subspace of $\QQ^4$. 
    The restriction of $\Q$ to the integer lattice $L(\ZZ) = L \cap \ZZ^4$ in $L$ is a definite binary form uniquely determined up to the standard $\SL_2(\ZZ)$-action (as the subspace is oriented). We thus obtain a point in the modular surface
    \begin{align*}
        [\LZ] \in \Y \defeq \dquot{\SL_2(\ZZ)}{\SL_2(\RR)}{\SO_2(\RR)} \simeq \lquot{\SL_2(\ZZ)}{\HypSpace^2}
    \end{align*}
    Restricting $\Q$ to the orthogonal complement $L^\perp$ (we will equip it below with a natural orientation), we get an $\SL_2(\ZZ)$-class of binary indefinite quadratic forms. Such a class gives rise to a periodic geodesic on the unit tangent bundle of the modular surface - the space of two-dimensional unimodular $\ZZ$-lattices -
    \[ \Tan(\Y) \defeq \lquot{\SL_2(\ZZ)}{\SL_2(\RR)}. \]
    The length of this geodesic is closely related to the length of the geodesic we viewed as the \enquote{position} of $L$ (cf.~Lemma~\ref{lem:lengths}). Versions of these constructions exist in any dimension (e.g.~as in \cite{aka_equidistribution_2021}). 
    
    We now turn to construct, given $L$, another geodesic on the above Bianchi orbifold, generalizing the Klein vectors construction mentioned above. This is a specialization of a more abstract construction, which readily works for any quaternary rational quadratic form and is carried out in \S\ref{sec:geodesic-qoKL} using Clifford algebras. It is the sought-after generalization of the Klein vectors construction mentioned above.
    
    Recall that we identify $\QQ^4$ with the set of Hermitian matrices $\mathcal{H} \subset \Mat[2](\QQi)$. Given an oriented integer basis $v_1,v_2$ of $L$, define
    \begin{align*}
    \Klein(\L) \defeq v_1\adj[v_2] - v_2\adj[v_1]
    \end{align*}
    where $\adj[\cdot]$ denotes the adjugate.
    The vector $\Klein(L)$, henceforth called Klein vector, is thus a $\ZZi$-integral matrix independent of the choice of basis. Also, one can check that $\det(\Klein(L))=4\disc(L)$. Take the orthogonal complement of $\Klein(L)$ within traceless matrices over $\QQi$ and with respect to the quadratic form $\det$. The quadratic form $\det$ on the $\ZZi$-lattice within this orthogonal complement gives rise to an $\SL_2(\ZZi)$-equivalence class of a binary quadratic form over $\ZZi$. 
    Such a class gives rise to a periodic geodesic on the unit tangent bundle of the Bianchi orbifold $\Y_{\QQi}$.
    
    The geometric constructions associated with a plane $\L$ dictate a specific intertwining of the above objects, as we now describe.
    For any oriented definite rational plane $\L$ one can define a probability measure on $\Tan(\Y_{\QQi})\times\Y\times\Tan(\Y)\times\Tan(\Y_{\QQi})$ by
    \begin{align*}
        \nu^{joint}_{\L}(\varphi) = \frac{1}{\ell_D}\int_{0}^{\ell_{D}}\!\varphi(a_s.z_{\L,1},z_{\L,2},a_s^2.z_{\L,3},a_s^2.z_{\L,4})\dd{s}.
    \end{align*}
    for all continuous compactly supported functions $\varphi$.
    Here, $\ell_{D}$ denotes the length of the periodic geodesic in the first component, which is determined by $\L$ and only depends on its discriminant $D$, $a_s$ denotes the unit-speed geodesic flows in the corresponding spaces, and $z_{\L} \defeq (z_{\L,1},z_{\L,2},z_{\L,3},z_{\L,4})$ is an interdependent tuple of points lying on the corresponding objects (determined by the geometric constructions above). 
    We refer to \S\ref{sec:planes-to-equi} for an explicit definition.
    The measure $\nu^{joint}_\L$ turns out to be only dependent on the $\SpinV(\ZZ)$-class of $\L$ and we define
    \begin{align*}
    \nu^{joint}_D = \frac{1}{|\J_D|} \sum_{\SpinV(\ZZ)\L \in \J_D} \nu^{joint}_\L.
    \end{align*}
    The measure $\nu^{joint}_D$ projects precisely to $\nu_D$ in the first factor $\Tan(\Y_{\QQi})$ and to similar measures in the remaining factors.

    The following is our main result.

    \begin{theorem}\label{thm:equi}
        Let $p$ be an odd prime. As $D\to\infty$ with $D$ square-free, odd and $-D$ or $D$ a non-zero square modulo $p$, the measures $\nu^{joint}_D$ converge to the uniform probability measure on 
        \[  \Tan(\Y_{\QQi})\times\Y\times\Tan(\Y)\times\Tan(\Y_{\QQi}). \]
    \end{theorem}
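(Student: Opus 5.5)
We will reformulate the joint measure $\nu^{joint}_D$ as the pushforward of a single adelic (or $S$-arithmetic) periodic orbit measure, and then apply the joinings classification of Einsiedler--Lindenstrauss~\cite{einsiedler_joinings_2019}.

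\emph{Step 1: a common group.} The four factors are quotients of $\SL_2(\CC)\simeq\SpinV(\RR)$, $\SL_2(\RR)$, $\SL_2(\RR)$ and $\SL_2(\CC)$. Fix a plane $L$ with $\disc(L)=D$. Its stabilizer in $\SpinV$ is a $\QQ$-torus $\mathbf{T}_L$. Over $\QQ$, the splitting field of $\mathbf{T}_L$ is governed by $\QQ(\sqrt{-D})$ together with $\QQi$ (equivalently by $\QQ(\sqrt{D})$ and $\QQ(\sqrt{-D})$). We realise the four objects attached to $L$ through four algebraic homomorphisms:
\begin{enumerate}
\item the natural map $\SpinV\to\Res{\QQi}(\SL_2)$;
\item the actions on $L$ and on $L^\perp$, giving $\SO_{Q|_L}$ and $\SO_{Q|_{L^\perp}}$, each lifted to $\SL_2$;
\item the action on the orthogonal complement of $\Klein(L)$, using the Clifford algebra construction of \S\ref{sec:geodesic-qoKL}.
\end{enumerate}
Assembling these gives a diagonal embedding of $\mathbf{T}_L$ into $\mathbf{G}=\Res{\QQi}\SL_2\times\SL_2\times\SL_2\times\Res{\QQi}\SL_2$. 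The factor $a_s^2$ in the last two coordinates reflects how a unit of $\mathbf{T}_L(\RR)$ acts on each component.

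\emph{Step 2: adelic lift and the splitting condition.} Using finiteness of class numbers and strong approximation for the simply connected groups involved, we identify $\nu^{joint}_D$ with the projection to $\mathbf{G}(\ZZ)\backslash\mathbf{G}(\RR)$ of a translate of the orbit measure of $\mathbf{T}_L(\mathbb{A})$ on $\mathbf{G}(\QQ)\backslash\mathbf{G}(\mathbb{A})$. This uses the fact that $\J_D$ is a union of genus classes, each a single $\mathbf{T}(\mathbb{A}_f)$-orbit up to finite index. Since $\pm D$ is a square mod $p$, the torus splits at $p$, so each projection carries the action of a rank-$\ge1$ split torus at the place $p$. Jointly the acting group has rank at least $2$ (real geodesic flow plus the $p$-adic torus), which is the higher-rank input needed. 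The hypotheses that $D$ is square-free and odd serve to make the local orders maximal, so that the relevant volumes, which are class numbers, grow like $D^{1/2\pm\epsilon}$ by Siegel's theorem, and so that no local obstructions arise at $2$.

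\emph{Step 3: equidistribution of each factor and joinings.} Each marginal equidistributes by known Duke/Linnik-type theorems: the $\Y$ and $\Tan(\Y)$ factors by Duke's theorem, and the Bianchi factors by Theorem~\ref{thm:PositionEquiIntro} and its analogue over $\QQi$. Under the splitting condition, these marginals can also be obtained by Linnik's ergodic method. Any weak-$*$ limit of $\nu^{joint}_D$ is therefore a joining of Haar measures, invariant under the diagonal $p$-adic split torus and the real flow. By \cite{einsiedler_joinings_2019}, such a joining is algebraic: an ergodic component is invariant under a finite-index subgroup of the $\QQ$-points of the graph of some $\QQ$-isogenies between factors. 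Escape of mass is excluded because each marginal is a probability measure.

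\emph{Step 4, the main obstacle: ruling out nontrivial joinings.} We must show that no two factors can be isogenous compatibly with the coupling. Over $\QQ$, the groups $\Res{\QQi}\SL_2$ and $\SL_2$ are not isogenous, but the two $\Res{\QQi}\SL_2$ factors (positions $1$ and $4$) and the two $\SL_2$ factors (positions $2$ and $3$) could be. Several arguments exclude a graph-joining:
\begin{itemize}
\item a mismatch in the real actions, since $a_s$ acts on one factor and $a_s^2$ on the other;
\item a mismatch in the $p$-adic actions;
\item the fact that a graph-joining would force $\mathbf{T}_L$ to be related across the two factors by a fixed rational isogeny independent of $D$, which is incompatible with the varying arithmetic data.
\end{itemize}
Specifically, an isogeny induces a fixed $\QQ$-conjugation relating the embedded tori; a counting argument shows this is possible for only a negligible proportion of $L$ as $D\to\infty$. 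With nontrivial joinings ruled out, the limit is the product Haar measure, which proves Theorem~\ref{thm:equi}.
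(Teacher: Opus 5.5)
Your architecture matches the paper's: lift to an $S$-arithmetic torus orbit, prove individual equidistribution, apply Einsiedler--Lindenstrauss, and exclude a graph joining of the two Bianchi factors by the mismatch between $a_s$ and $a_{2s}$. However, your application of the joinings theorem has a genuine gap.

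\textbf{The $p$-adic split torus and the joinings theorem.} Up to isogeny, $\HH_L\simeq\mathrm{Spin}_{Q|_L}\times\mathrm{Spin}_{Q|_{L^\perp}}$, and the congruence condition splits only one of these at $p$. If $D$ is a square mod $p$, the split piece is $\HH_L^{pt}$, which acts trivially on $L$. If $-D$ is a square, it is $\HH_{L^\perp}^{pt}$, which acts trivially on $L^\perp$. So your claim in Step 2 that every factor carries a split $p$-adic torus fails for one of the $\mathrm{SL}_2$-factors. The real flow $(a_s,1,a_{2s},a_{2s})$ is also trivial on the CM factor. Hence, unless both $\pm D$ are squares mod $p$:
\begin{itemize}
\item in the first case, factor $2$ sees only compact groups at both places;
\item in the second case, factors $2$ and $3$ each carry only a rank-one action (one $p$-adic, one real).
\end{itemize}
The classification in \cite{einsiedler_joinings_2019} needs a class-$\mathcal{A}'$ (in particular higher-rank) action on every factor; ``joint rank $\geq 2$'' is not enough. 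So you cannot apply it to the four-fold joining. Your mismatch bullets for the pair $(2,3)$ reason about an algebraic joining whose existence was never established.

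The missing idea is the elementary fact that a joining of an ergodic system with a trivial one is the product (Lemma~\ref{lem:joinings-for-ergodic-and-trivial}). The paper proceeds as follows:
\begin{itemize}
\item Apply \cite{einsiedler_joinings_2019} only to factors $\{1,3,4\}$ (resp.\ $\{1,4\}$), where the $\mathbb{Z}^2$-action generated by the real flow and a $p$-adic element is genuinely higher rank. There only the pair $(1,4)$ needs the eigenvalue argument.
\item In the $-D$ case, split off factor $3$: the $p$-adic element is ergodic on $\XX_{p,1}\times\XX_{p,4}$ and trivial on $\XX_{p,3}$.
\item Finally split off factor $2$: the real flow is ergodic on $\XX_p'$ (Howe--Moore) and trivial on $\XX_{p,2}$.
\end{itemize}
Note also that $p$-adic invariance only makes sense for limits of the lifts $\mu_{L_n}$ on $\XX_p$. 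One passes to a subsequence along which the integral bases converge $p$-adically to a limiting plane $E_p$, whose stabilizer supplies the invariance. Weak-$*$ limits of $\nu^{joint}_D$ on the real quotient carry no $p$-adic action.

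\textbf{The marginals in Step~3.} Step~3 is also incomplete: the marginals are not Duke's full collections, so citing Duke's theorem does not suffice. The $j$-th marginal of a single packet is the orbit of the image of $\HH_L(\mathbb{Q})\backslash\HH_L(\mathbb{A}_f)/\HH_L(\widehat{\mathbb{Z}})$ under $\theta_L$, $\theta_{L^\perp}$, $\theta_{\Lambda_L}$, or under the identity for the first factor. In the first factor this group itself only surjects onto the squares of the class group of $\mathbb{Q}(i,\sqrt{D})$. In general these images are proper subgroups of the relevant class groups. Moreover, $\nu^{joint}_D$ is merely a convex combination of such packet measures, and getting the weights right needs the disjointness of Lemma~\ref{lem:disjoint-proj}. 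The needed inputs are:
\begin{itemize}
\item the images contain all squares: by Corollary~\ref{cor:stabilizer-subgroup}, $\beta\beta'$ acts on $L$ by $\ClNrm(\beta')\beta^2$ and on $\Klein(L)^\perp\simeq L\otimes L^\perp$ by $(\beta\beta')^2$;
\item the $2$-torsion (hence $4$-torsion) of the class groups of $\mathbb{Q}(\sqrt{\pm D})$ and $\mathbb{Q}(i,\sqrt{D})$ is $D^{o(1)}$, by genus theory resp.\ Halter-Koch;
\item a Duke-type theorem for subpackets of volume at least $\mathrm{disc}^{1/2-\delta}$, such as Theorem~\ref{thm:Duke}. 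Linnik's method at the split prime, combined with this index bound, would also work.
\end{itemize}
Theorem~\ref{thm:PositionEquiIntro} itself rests on exactly this, and no ``analogue'' for the fourth factor is available without it.

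Finally, the counting argument in your last bullet is unnecessary. Besides the eigenvalue mismatch for $(1,4)$, the pairs $(1,3)$ and $(3,4)$ are excluded simply because $\mathrm{Res}_{\mathbb{Q}(i)/\mathbb{Q}}\mathrm{SL}_2\not\cong\mathrm{SL}_2$.
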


    We note that, conjecturally, one should be able to omit the congruence condition at the auxiliary prime $p$. 
    Removing the auxiliary splitting prime requires significant efforts and, in particular, necessitates avoiding the aforementioned rigidity results of Einsiedler and Lindenstrauss. These rigidity results require congruence conditions at two distinct places.
    Using a more elementary disjointness argument (see \S\ref{sec:proof-dyn}), we can make use of the prime at infinity, that is, the geodesic flow.

    \begin{remark}[Other quadratic forms]
    It is likely that a theorem of the above nature can be established for general quadratic forms; in particular, we extend the Klein vectors to this generality. The distribution results in this article are proved for the form $-x_1^2-x_2^2-x_3^2+x_4^2$ for simplicity of exposition and as this choice relates to natural arithmetic objects. Additionally, it allows us to avoid technical difficulties pertaining to the Steinitz class of the complement of the Klein vector.
    \end{remark}
    
    \begin{remark}[Other future directions]
    To the authors' knowledge, analytic methods \cite{BlomerBrumley,BlomerBrumleyKhayutin,BlomerBrumleyRadiwill} have not yet provided any progress on an equidistribution problem in more than $2$ factors as in Theorem~\ref{thm:equi}; such progress would doubtlessly be interesting to the community.
    
    In a precursor of the problem studied here, Aka, Einsiedler, Shapira \cite{AkaEinsiedlerShapira-1in3,AkaEinsiedlerShapira-grids} considered integer points on spheres together with their orthogonal lattices. That slightly simpler problem can be refined to consider additionally a natural marked point on the torus associated to each CM point \textemdash~see \cite{AkaEinsiedlerShapira-grids} and, particularly, the striking work of Khayutin \cite{Khayutin-grids} relying on geometric invariant theory. The authors are unaware of a similar refinement in the context of this paper or of \cite{aka_planes_2021}.
    \end{remark}
    	
    \subsection{Outline of the paper}\label{sec:outline}
    
    This paper is organized as follows.
    In \S\ref{sec:Clifford-algebra-and-Klein-map}, we generalize the construction of Klein vectors (which vaguely parametrizes two-dimensional subspaces) from \cite{aka_planes_2021} to general quadratic forms. Here, the main tools include the notion of Clifford algebra for a quadratic form.
    In \S\ref{sec:shapes}, we discuss the arithmetic objects and quadratic forms attached to each non-degenerate oriented rational plane. 
    In \S\ref{sec:dynamical-theorem}, we formulate our dynamical main theorem, \Cref{thm:dynamical-version}, and use it to deduce \Cref{thm:equi} above. Finally, in \S\ref{sec:proof-of-dyn-thm} we prove the dynamical theorem (\Cref{thm:dynamical-version}) using the aforementioned classification of joinings of higher-rank actions of Einsiedler and Lindenstrauss \cite{einsiedler_joinings_2019} and a variant of Duke's equidistribution theorem \cite{duke_hyperbolic_1988} (cf.~Theorem~\ref{thm:Duke}).
    
    \subsection{Notation}
    
    For a locally compact group $G$ and $\Gamma < G$ a lattice, the quotient $\tlquot{\Gamma}{G}$ is equipped with a (left) $G$-action $g.\Gamma h = \Gamma hg^{-1}$. We denote by $m_{\Gamma\backslash G}$ the $G$-invariant probability measure on $\tlquot{\Gamma}{G}$.
    
    Let $\Adeles$ (respectively $\Adeles_f$) be the ring of adeles (respectively finite adeles).
    We view $\QQ$ as diagonally embedded in $\Adeles$ and, as such, $\QQ$ is discrete.

    Given a semisimple linear algebraic group $\mathbf{M}$ defined over $\QQ$, the 
    subgroup $\mathbf{M}(\QQ) < \mathbf{M}(\Adeles)$ is a lattice by a theorem of Borel and Harish-Chandra \cite{borel_properties_1961}.
    For $g\in\mathbf{M}(\Adeles)= \mathbf{M}(\R) \times \prod_p' \mathbf{M}(\QQ_p)$, we denote its components by $g = (g_\infty,g_2,g_3,\dots)$.
    We say that $\mathbf{M}$ has class number one with respect to a compact open subgroup $K_f < \mathbf{M}(\Adeles_f)$, if
    \begin{align}\label{eq:class number one}
    \mathbf{M}(\Adeles) = \mathbf{M}(\QQ)(\mathbf{M}(\RR)\times K_f).
    \end{align}
    If $\mathbf{M}$ is simply connected and $\mathbf{M}(\RR)$ has no compact factors, strong approximation implies that $\mathbf{M}$ has class number one with respect to any compact open subgroup of $\mathbf{M}(\Adeles_f)$.

    We let
    \begin{align}\label{eq:defG}
    \GG = \mathrm{Res}_{\QQi/\QQ}(\SL_2) \times \SL_2 \times \SL_2 \times \mathrm{Res}_{\QQi/\QQ}(\SL_2)
    \end{align}
    and denote by $\GG_1,\ldots,\GG_4$ the factors of $\GG$.
    These are simply connected groups and have class number one with respect to any compact open subgroup of the finite adelic points.
    Throughout, we will denote the components of a geometric point $g$ in $\GG$ by $g_j$ in the corresponding factor $\GG_j$ of $\GG$, $1\leq j \leq 4$.
    
    \subsection*{Acknowledgements}
    We are very grateful to Philippe Michel for providing us with an outline of the proof of Theorem~\ref{thm:Duke} (a variant of Duke's theorem) and for directing us to useful references mentioned in \S\ref{subsec:EquiEachFac}.
    We thank Manfred Einsiedler for inspiring discussions. 
    A.W.~would also like to thank the Forschungsinstitut für Mathematik at ETH Zurich and the Institute for Advanced Study for providing such excellent work environments.
	
\section{Clifford Algebras and the Klein map}\label{sec:Clifford-algebra-and-Klein-map}
	
    We will now construct a map \textendash~henceforth called Klein map \textendash~\enquote{parametrizing} two-dimensional subspaces of an arbitrary non-degenerate four-dimensional quadratic vector space $\V$ over $\QQ$. The Klein map is constructed using Clifford algebras into which we give a short introduction in the following. 

\subsection{Preliminaries on Clifford Algebras}\label{sec:prel-clifford-alg}

    Here, we recall \textendash~mostly without proofs \textendash~a few standard facts about Clifford algebras. We refer to \cite{Knus-campinas}, \cite{cassels2008rational}, or \cite[Chapter IV]{knus_quadratic_1991} for a more thorough discussion. 
    
    Throughout the following, $\KK$ will always denote a general field of characteristic zero. We consider a $d$-dimensional quadratic space $(\V,\Q)$ over $\KK$, i.e.~a vector space $\V$ over $\KK$ equipped with a quadratic form $\Q\colon\V\to\KK$. 
    We write $\bQ$ for the bilinear form associated to $\Q$ given by 
    \begin{align}\label{eq:bilinear-form}
        \bQ(x,y) = \Q(x+y)-\Q(x)-\Q(y).
    \end{align} 
    We call the quadratic space $(\V,\Q)$ \highlight{non-degenerate} if the bilinear form $\bQ$ is a perfect pairing, i.e.~if the adjoint ${\bQ}^*\colon\V\to\V^*,~x\mapsto \bQ(\cdot,x)$, of $\bQ$ is an isomorphism (the terms \enquote{non-singular} or \enquote{regular} can also be found in the literature). 
    The \highlight{discriminant} of $\Q$ with respect to a basis $\{e_1,\dots,e_d\}$ is
    \[ \disc(\Q) \defeq \det((\bQ(e_i,e_j)/2)_{i,j}). \]
    Changing the basis alters the discriminant by a non-zero square. We note that $(\V,\Q)$ is non-degenerate exactly when the discriminant is non-zero.

    \begin{definition}\label{def:clifford-alg}
        The \highlight{Clifford algebra} of the quadratic space $(\V,\Q)$ is a $\KK$-algebra $\Cl$ together with a $\KK$-linear map $\iota\colon\V\to\Cl$,  which is universal with respect to the property
    	\begin{align}\label{eq:universal-property}
    		\iota(x)^2 = \Q(x)
    	\end{align}
    	for all $x\in\V$: if $A$ is any $\KK$-algebra with a $\KK$-linear map $\phi: \V \to A$ satisfying $\phi(x)^2 = \Q(x)$ for all $x\in\V$, there is a $\KK$-algebra homomorphism $\phi': \Cl\to 
        A$ such that $\phi' \circ \iota = \phi$.
    \end{definition}
    
    To construct the Clifford algebra explicitly, take the graded tensor algebra \linebreak$T(\V)=\KK\oplus\V\oplus(\V\otimes\V)\oplus\cdots$ and the quotient of it by the two-sided ideal $\mathcal{J}$ of $T(\V)$ generated by all elements of the form $x\otimes x - \Q(x)$ for $x \in \V$. 
    That is,
	\[ \Cl \defeq \quot{T(\V)}{\mathcal{J}} \]
	together with the canonical map $\iota\colon\V\to\tquot{T(\V)}{\mathcal{J}}$. 
    
    Henceforth, using $\iota$, we will view the quadratic space $\V$ as a subset of its Clifford algebra. 
    The Clifford algebra comes with a natural grading into \highlight{even} and \highlight{odd} part
	\[ \Cl = \Cle \oplus \Clo, \]
    where the even and odd parts corresponds exactly to the even and odd degree tensors in the above construction. 
    
	Using the relation \eqref{eq:universal-property} for $x+y$ it follows that
	\begin{align}\label{eq:char-property}
        xy + yx = \bQ(x,y), 
    \end{align}
    so, in particular, orthogonal vectors anti-commute. The dimension of $\Cl$ as a $\KK$-vector space is $2^{d}$. In fact, if $\{e_1,\ldots,e_d\}$ is a basis of $\V$, then
    \begin{align}\label{eq:basis Clifford algebra}
    \{1\}\cup\{e_{i_1}\cdots e_{i_j} \st i_1 < \cdots < i_j, j=1,\ldots,d \}
    \end{align}	
    is a basis of the Clifford algebra.
    The invertible elements of the even Clifford algebra act on the Clifford algebra by conjugation. When convenient, we denote this action by $\Adj_{\bullet}$.

	\begin{definition}
		The \highlight{special Clifford group} is
        \begin{align}\label{eq:special-cl-group}
            \sClGrV(\KK) = \{\alpha \in\Cle^\times: \alpha\V\alpha^{-1} \subset \V\}.
        \end{align}
	\end{definition}
    The action of $\sClGrV(\KK)$ on $\V$ by conjugation yields a short exact sequence
    \begin{align}1\longrightarrow\KK^\times
        \longrightarrow&\sClGrV(\KK)\longrightarrow\SO_\Q(\KK)\longrightarrow1.\label{eq:ses-of-sClGr}
	\end{align}

\begin{remark}
    This is a consequence of the well-known fact that any element of $\SO_{\Q}(\KK)$ can be written as a product of an \emph{even} number of reflections. Surjectivity thus follows from the relation $-vwv^{-1} = w - \frac{\bQ(v,w)}{\Q(v)}v$ for anisotropic vectors $v$.
\end{remark}

    We define the \highlight{standard involution} $\stdinv\colon\Cl\to\Cl$ by linearly extending the map $v_1\cdots v_m \mapsto (-v_m)\cdots (-v_1)$ for all $v_1,\dots,v_m\in\V$. It follows that $\stdinv(\alpha\beta) = \stdinv(\beta)\stdinv(\alpha)$ for all $\alpha,\beta\in\Cl$. The standard involution gives rise to the \highlight{norm form}
	\begin{align*}
		\ClNrm(\alpha) &\defeq \alpha\stdinv(\alpha), \quad \alpha\in\Cl.
	\end{align*}
	For $x\in\V$ we have $\ClNrm(x) = -\Q(x)\in\KK$, but $\ClNrm$ does not map $\Cl$ to~$\KK$ in general.
    
    \begin{lemma}\label{lem:norm-induces-hom-on-ClGr}
		The norm form $\ClNrm$ induces a group homomorphism $\ClNrm\colon\sClGrV(\KK)\to\KK^\times$.
	\end{lemma}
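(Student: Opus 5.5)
The plan is to show two things: that $\ClNrm(\alpha)$ is a nonzero scalar for every $\alpha\in\sClGrV(\KK)$, and that $\ClNrm$ is multiplicative on $\sClGrV(\KK)$. Multiplicativity is the easy half once the first claim is known. Since $\stdinv$ is an anti-automorphism, $\ClNrm(\alpha\beta)=\alpha\beta\stdinv(\beta)\stdinv(\alpha)=\alpha\ClNrm(\beta)\stdinv(\alpha)$. If $\ClNrm(\beta)\in\KK$, it is central, so this equals $\ClNrm(\beta)\ClNrm(\alpha)$. Nonvanishing follows because $\alpha$ is invertible, so $\stdinv(\alpha)$ is invertible with inverse $\stdinv(\alpha^{-1})$.

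The main step, and the one I expect to be the real obstacle, is showing $\ClNrm(\alpha)\in\KK^\times$. My first approach uses the exact sequence \eqref{eq:ses-of-sClGr} together with the remark that every element of $\SO_\Q(\KK)$ is a product of an even number of reflections.
\begin{itemize}
\item Given $\alpha\in\sClGrV(\KK)$, write its image in $\SO_\Q(\KK)$ as a product of $2m$ reflections in anisotropic vectors $v_1,\dots,v_{2m}$.
\item By the displayed reflection relation, conjugation by $v_1\cdots v_{2m}\in\sClGrV(\KK)$ induces the same orthogonal transformation.
\item Exactness at $\sClGrV(\KK)$ then gives $\alpha=\lambda v_1\cdots v_{2m}$ for some $\lambda\in\KK^\times$.
\item Computing directly, $\stdinv(\alpha)=\lambda v_{2m}\cdots v_1$, using that the signs from $(-v_j)$ cancel since there are an even number of them.
\item Collapsing from the middle with $v_jv_j=\Q(v_j)$ gives $\ClNrm(\alpha)=\lambda^2\prod_j \Q(v_j)\in\KK^\times$.
\end{itemize}

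As a fallback that avoids relying on the factorization, I would argue directly. First, $\stdinv$ preserves $\V$ (it acts as $-1$ there) and reverses products. Applying $\stdinv$ to $\alpha v\alpha^{-1}=w\in\V$ gives $\stdinv(\alpha)^{-1}v\stdinv(\alpha)=\alpha v\alpha^{-1}$. Hence $\ClNrm(\alpha)=\alpha\stdinv(\alpha)$ commutes with all of $\V$, and therefore with all of $\Cl$, since $\V$ generates $\Cl$. It also lies in $\Cle$. So it suffices to know that the centralizer of $\V$ in $\Cle$ is $\KK$. This holds for non-degenerate $\Q$, and one checks it with the basis \eqref{eq:basis Clifford algebra} for an orthogonal basis $e_i$: each nonscalar even basis monomial anticommutes with some $e_i$.

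Either route gives the conclusion, and I would present the first one, citing the surjectivity remark.
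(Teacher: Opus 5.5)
The paper states this lemma without proof; it is one of the standard Clifford-algebra facts it recalls with references to Knus and Cassels, so there is no argument in the text to compare with. Your proposal is correct and supplies one.

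Your first route is the classical one. Cartan--Dieudonn\'e together with exactness of \eqref{eq:ses-of-sClGr} lets you write $\alpha=\lambda v_1\cdots v_{2m}$. In return you get the explicit value $\ClNrm(\alpha)=\lambda^2\prod_j\Q(v_j)$. This shows that $\ClNrm$ is the classical spinor norm modulo squares, and it explains the last map in \eqref{eq:spin-exact-seq}.

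That route is less independent than it looks, however. It needs the kernel of $\sClGrV(\KK)\to\SO_\Q(\KK)$ to be exactly $\KK^\times$, which the paper asserts but does not justify. That statement is precisely the fact that an element of $\Cle$ centralizing $\V$ is a scalar, which is the key input of your fallback. So the fallback is strictly more self-contained. It needs only the monomial basis \eqref{eq:basis Clifford algebra} for an orthogonal basis, whose vectors are anisotropic by non-degeneracy, and characteristic $\neq 2$. It needs no factorization into reflections.

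There is one small slip in the fallback. From $\stdinv(\alpha)^{-1}v\stdinv(\alpha)=\alpha v\alpha^{-1}$, what you read off directly is that $\stdinv(\alpha)\alpha$ commutes with $v$, not $\alpha\stdinv(\alpha)$. You can repair this in either of two ways:
\begin{itemize}
\item Conclude first that $\stdinv(\alpha)\alpha=c\in\KK$, and then $\alpha\stdinv(\alpha)=\alpha c\alpha^{-1}=c$.
\item Substitute $w=\alpha v\alpha^{-1}$, which ranges over all of $\V$ because $\Adj_\alpha$ is injective on the finite-dimensional space $\V$.
\end{itemize}

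Finally, both routes genuinely use non-degeneracy of $\Q$, which is implicit in the paper's setup. For example, if $\Q=0$ on a four-dimensional $\V$, then $\alpha=1+e_1e_2+e_3e_4$ lies in $\sClGrV(\KK)$ but $\ClNrm(\alpha)=1-2e_1e_2e_3e_4\notin\KK$.
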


	The \highlight{spin group} of $(\V,\Q)$ is, by definition, the kernel of the above map
	\begin{align}\label{eq:spin-group}
		\SpinV(\KK) &\defeq \{\alpha\in\sClGrV(\KK)\st \ClNrm(\alpha) = 1 \}.
	\end{align}
    Taking the norm of a lift of an element in $\SO_\Q$ via \eqref{eq:ses-of-sClGr}, we obtain the last map in the short exact sequence
    \begin{align}\label{eq:spin-exact-seq}
        1 \to \{\pm 1\} \to \SpinV(\KK) \to \SO_\Q(\KK) \to \KK^\times/(\KK^\times)^2.
    \end{align}

    As indicated by the definition in \eqref{eq:spin-group}, $\SpinV(\KK)$ is the set of $\KK$-points of a linear algebraic group $\SpinV$ defined over $\KK$.
    For a field $\mathbb{L}$ containing $\KK$, the quadratic form $\Q$ extends uniquely to a quadratic form $\V\otimes_{\KK}\mathbb{L} \to \mathbb{L}$ which we again denote by $\Q$ for simplicity.
    It holds that $\Cl[(\V \otimes_{\KK} \mathbb{L},\Q)] \simeq \Cl\otimes_{\KK}\mathbb{L}$ and so $\SpinV(\mathbb{L})$ is the set of $\mathbb{L}$-points of the linear algebraic group $\SpinV$ defined over $\KK$.
    The analogous statement is true for the special Clifford group.
    Linear equations to define $\SpinV$ can be obtained from \eqref{eq:special-cl-group} and \eqref{eq:spin-group}. 
    The spin group $\SpinV$ is a simply connected semisimple group and an algebraic double covering of $\SO_\Q$ (by \eqref{eq:spin-exact-seq}).

\subsection{Quaternary Quadratic forms and the Klein map}\label{sec:klein-map}
    From now on, we assume that $\V$ has dimension $4$. 
    By computing $\stdinv$ on the basis in \eqref{eq:basis Clifford algebra} using an orthogonal basis of $\V$, we have	
    \begin{align}\label{eq:char-of-V}
		\V = \{v\in\Clo\st \stdinv(v) = -v\}.
	\end{align}

    This characterization of $\V$ has the following implication.
    \begin{lemma}\label{lem:norm-implies-ClGr}
		For any $\alpha\in\Cle$ with $\ClNrm(\alpha)\in\KK^\times$ we have $\alpha\in\sClGrV(\KK)$. In particular,
        \begin{align} \label{eq:spinV}
            \SpinV(\KK) = \{\alpha \in \Cle: \ClNrm(\alpha)=1 \}.
        \end{align}
	\end{lemma}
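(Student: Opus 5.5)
The plan is to take $\alpha\in\Cle$ with $\ClNrm(\alpha)=\lambda\in\KK^\times$, establish that $\alpha$ is invertible, and then use the characterization \eqref{eq:char-of-V} to see that conjugation by $\alpha$ preserves $\V$.

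Invertibility comes first. Since $\alpha\stdinv(\alpha)=\lambda$ is a nonzero scalar, $\alpha$ has the right inverse $\lambda^{-1}\stdinv(\alpha)$. The algebra $\Cl$ is finite-dimensional, so a one-sided inverse is automatically two-sided. Hence $\alpha^{-1}=\lambda^{-1}\stdinv(\alpha)$. Because $\stdinv$ preserves the grading, $\alpha^{-1}\in\Cle$, and therefore $\alpha\in\Cle^\times$.

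Next, fix $v\in\V$ and set $w=\alpha v\alpha^{-1}=\lambda^{-1}\alpha v\stdinv(\alpha)$.
\begin{itemize}
\item Parity: $w$ is the product of an even, an odd and an even element, so $w\in\Clo$.
\item Symmetry: since $\stdinv$ is an anti-involution and $\stdinv(v)=-v$, we get $\stdinv(w)=\lambda^{-1}\stdinv(\stdinv(\alpha))\stdinv(v)\stdinv(\alpha)=-\lambda^{-1}\alpha v\stdinv(\alpha)=-w$.
\end{itemize}
By \eqref{eq:char-of-V}, which is exactly where $\dim\V=4$ enters, it follows that $w\in\V$. This gives $\alpha\V\alpha^{-1}\subset\V$, i.e.\ $\alpha\in\sClGrV(\KK)$.

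The identity \eqref{eq:spinV} then follows. The inclusion ``$\subseteq$'' is immediate from the definition \eqref{eq:spin-group}. For ``$\supseteq$'', apply the first part with $\lambda=1$.

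The only points needing care are that $\stdinv$ is an involution, so $\stdinv\circ\stdinv=\mathrm{id}$, and that it preserves parity. Both follow directly from its definition on products of vectors, so I expect no real obstacle.
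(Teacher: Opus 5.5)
Your proof is correct and follows the paper's argument: the paper also writes $\alpha^{-1}=\stdinv(\alpha)/\ClNrm(\alpha)$, checks that $\stdinv(\alpha v\alpha^{-1})=-\alpha v\alpha^{-1}$, and concludes via \eqref{eq:char-of-V}. Your extra checks are points the paper leaves implicit and that are needed to invoke \eqref{eq:char-of-V}: that the one-sided inverse is two-sided, and that $\alpha v\alpha^{-1}$ lies in the odd part.
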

	\begin{proof}
        Note that $\ClNrm(\alpha)\in\KK^\times$ implies $\alpha^{-1} = \tfrac{\stdinv(\alpha)}{\ClNrm(\alpha)}$. 
        Hence, for all $v\in\V$ we have
        \[ \stdinv(\alpha v \alpha^{-1}) 
        = \tfrac{1}{\ClNrm(\alpha)}\alpha\stdinv(v)\stdinv(\alpha) = -\tfrac{1}{\ClNrm(\alpha)}\alpha v\stdinv(\alpha) = -\alpha v \alpha^{-1} \]
        and using \eqref{eq:char-of-V} we conclude $\alpha \V \alpha^{-1} \subseteq \V$ and thus $\alpha\in\sClGrV(\KK)$.
	\end{proof}

    We denote the center of the even Clifford algebra by $\Z \defeq \Z(\Cle)$,
    which is a so-called separable quadratic $\KK$-algebra. Explicitly, if $\{e_1,e_2,e_3,e_4\}$ is an orthogonal basis of $\V$, we have that $\{1, e_1e_2e_3e_4\}$ is a $\KK$-basis of $\Z$. By orthogonality and the relations \eqref{eq:char-property}, $(e_1e_2e_3e_4)^2$ is equal to $\disc(\Q)$. If $\disc(\Q)\not\in(\KK^\times)^2$, then $\Z$ is the quadratic extension $\KK[x]/(x^2-\disc(Q))$ of $\KK$.
    If $\disc(\Q)\in(\KK^\times)^2$, then $\Z\cong\KK\times\KK$.
    The quadratic $\KK$-algebra $\Z$ comes with a unique involution (which is the Galois automorphism if $\Z$ is a field, respectively flips the coordinates if $\Z\simeq\KK\times\KK$) giving rise to a trace form $\trZ$ and a norm form $\nrZ$.
	\begin{proposition}\label{prop:structure-of-Cle}
		Let $(\V,\Q)$ be a non-degenerate quaternary quadratic $\KK$-vector space. If $\Z$ is a field, then $\Cle$ is a quaternion algebra over $\Z$, and if $\Z \simeq \KK\times\KK$, then $\Cle\simeq A\times A$ where $A$ is a quaternion algebra over $\KK$.
        Moreover, $\stdinv$ corresponds to the standard involution of the quaternion algebra in the former case and the product of the standard involutions in the latter case.
	\end{proposition}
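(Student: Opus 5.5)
Fix an orthogonal basis $e_1,\dots,e_4$ of $\V$ with $\Q(e_i)=a_i\in\KK^\times$. I will work with explicit generators of $\Cle$ and never invoke the general structure theory. The first step is to describe the centre. By the basis \eqref{eq:basis Clifford algebra}, $\Cle$ has dimension $8$, with basis $1$, the six products $e_ie_j$ for $i<j$, and $\omega\defeq e_1e_2e_3e_4$. Since $\omega^2=\disc(\Q)=:\delta$ (up to the normalisation used above), $\Z=\KK[\omega]$.

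Next I set $u=e_1e_2$ and $w=e_1e_3$. The relations \eqref{eq:char-property} give
\[ u^2=-a_1a_2, \qquad w^2=-a_1a_3, \qquad uw=-wu. \]
So $u$ and $w$ generate a quaternion algebra $A\defeq(-a_1a_2,-a_1a_3)_\KK$ inside $\Cle$, with basis $1,u,w,uw$, where $uw=-a_1e_2e_3$. The remaining basis vectors of $\Cle$ are $\omega$ times these, up to scalars. For instance, $e_3e_4$ is a multiple of $\omega u$, $e_2e_4$ of $\omega w$, and $e_1e_4$ of $\omega\,e_2e_3$. Hence $\Cle=A+\omega A$. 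Moreover $\omega$ commutes with $\Cle$, since it commutes with every product of two basis vectors. Multiplication therefore gives a surjective $\KK$-algebra map
\[ A\otimes_\KK\Z\to\Cle, \]
and it is an isomorphism because both sides have dimension $8$.

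The two cases now follow. If $\Z$ is a field, then $A\otimes_\KK\Z$ is a quaternion algebra over $\Z$. If $\Z\simeq\KK\times\KK$, with idempotents $\tfrac12(1\pm\omega/\sqrt\delta)$, then $A\otimes\Z\simeq A\times A$.

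It remains to identify the involution $\stdinv$. On the generators, $\stdinv(e_ie_j)=(-e_j)(-e_i)=e_je_i=-e_ie_j$, so $\stdinv(u)=-u$, $\stdinv(w)=-w$, and $\stdinv(uw)=\stdinv(w)\stdinv(u)=wu=-uw$. On the centre, $\stdinv(\omega)=e_4e_3e_2e_1$, which equals $\omega$ after reordering: the reversal of four anticommuting factors costs six transpositions, and the four signs from $(-e_i)$ give $+1$. So $\stdinv$ is $\Z$-linear, $\KK$-linear, an anti-automorphism, and it negates the pure quaternions of $A$. It therefore agrees with $\mathrm{id}_\Z\otimes\overline{\,\cdot\,}$, the standard involution of $A\otimes\Z$ over $\Z$. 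In the split case this involution preserves each factor $A$ (the idempotents are fixed) and acts on each as the standard involution.

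The only step I expect to need care is the sign bookkeeping: checking that $\omega$ is central and fixed by $\stdinv$, and that $\Cle=A\oplus\omega A$. Each of these is a direct computation with anticommuting orthogonal vectors, so there is no conceptual obstacle.
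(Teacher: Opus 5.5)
Your proof is correct and uses the same explicit orthogonal-basis computation as the paper, built on the central element $e_1e_2e_3e_4$. The only difference is packaging. You prove the single isomorphism $A\otimes_\KK\Z\cong\Cle$, with $A$ the even Clifford algebra of $\langle e_1,e_2,e_3\rangle$, so both cases and the identification of $\stdinv$ follow at once, and in the split case both factors are visibly the same $A$. The paper instead checks the quaternion relations over $\Z$ directly in the field case and splits $\Cle$ using the idempotents $z_1,z_2$ in the other case.
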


    The proposition and its generalization to arbitrary dimension are completely standard, see e.g.~\cite[p.~45]{Knus-campinas} or \cite[IV. (2.2.3)]{knus_quadratic_1991}; we give a concrete argument for the readers' convenience.
    
    \begin{proof}
        Let $\{e_1,e_2,e_3,e_4\}$ be an orthogonal basis of $\V$. Recall that $\Z$ is spanned by $\{1, e_1e_2e_3e_4\}$ over $\KK$.
        First, assume that $\Z$ is a field. Then $\Cle$ is four-dimensional (as vector space) over $\Z$ and the basis $\{1, e_1e_2, e_2e_3, \Q(e_2)e_1e_3\}$ satisfies
        \begin{align*}
        (e_1e_2)^2 = -\Q(e_1)\Q(e_2),&\quad
        (e_2e_3)^2 = -\Q(e_2)\Q(e_3),\\
        (e_1e_2)(e_2e_3) = \Q(e_2)&e_1e_3 =-(e_2e_3)(e_1e_2).
        \end{align*}
        Thus, $\Cle$ is a quaternion algebra over $\Z$.

        Now assume that $\Z\simeq\KK\times\KK$. 
        Recall that $\disc(Q)$ is a square in $\KK$ in this case and write $x^2 = \disc(\Q)$ for some $x\in \KK^\times$. 
        Idempotents in $\Z$ are then given by
        \begin{align*}
            z_1 = \tfrac{1}{2}\left(1 + \frac{e_1e_2e_3e_4}{x}\right) \simeq (1,0),\quad z_2 = \tfrac{1}{2}\left(1 - \frac{e_1e_2e_3e_4}{x}\right)\simeq (0,1)
        \end{align*}
        and $z_1z_2 = z_2z_1 = 0$ as well as $z_1+z_2=1$. 
        In particular, we can decompose every $z\in\Cle$ as $z = z_1z + z_2z$.
        One can check that $z_1v + vz_1 = v$ and $vz_1 = z_2v$ holds for all $v\in\V$.
        As in the first case, the span of $\{z_1, z_1e_1e_2, z_1e_2e_3, \Q(e_2)z_1e_1e_3\}$ over $\KK$ is a quaternion algebra over $\KK$ and is isomorphic to the analogously defined quaternion algebra using the idempotent $z_2$. The above decomposition of $\Cle$ thus implies the proposition in this case.

        It is straightforward to check that $\stdinv$ corresponds to the standard involutions or the product of standard involutions, respectively.
    \end{proof}

\subsubsection{Klein map}\label{sec:Klein-map}
    Let $(\V,\Q)$ be a non-degenerate quaternary quadratic space over $\QQ$ with a choice of an orientation on $\V$ as a real vector space.
    In this section, we define the Klein map and describe how oriented planes in $\V$ give rise to \emph{Klein vectors} which are traceless vectors in the even Clifford algebra.

    We denote by $\varV[]$ the variety of pure $2$-wedges in the affine four-space, so that 
    \[ \varV[](\QQ) 
    = \big\{v_1\wedge v_2\in \textstyle{\bigwedge^2}\V: v_1,v_2\in\V\big\} 
    =\big\{w \in \bigwedge^2\V: w \wedge w =0\big\}. \]
    For a pure $2$-wedge $v_1\wedge v_2\in\varV[]$ we define its discriminant (with respect to $\Q$) as
    \[ \disc(v_1\wedge v_2) \defeq \det{\smat{\Q(v_1)}{\bQ(v_1,v_2)/2}{\bQ(v_2,v_1)/2}{\Q(v_2)}}, \]
    and denote by $\varV[nd] \defeq \{v_1\wedge v_2\in\varV[]\st\disc(v_1\wedge v_2)\neq0\}$ the open set of \highlight{non-degenerate} pure $2$-wedges.
    Note that the action of $\sClGrV(\QQ)$ on $\V$ naturally induces an action of $\sClGrV$ on $\varV[]$ via
    \[ \Adj_\alpha(v_1\wedge v_2) = \Adj_\alpha(v_1)\wedge\Adj_\alpha(v_2). \]
        
    We call a non-degenerate pure wedge $v_3\wedge v_4$ \highlight{orthogonal} to another non-degenerate pure wedge $v_1\wedge v_2$ if the plane spanned by $v_3$ and $v_4$ is orthogonal to the plane spanned by $v_1$ and $v_2$. 
    
    We naturally extend the quadratic form $\Q$ to $\V(\KK) \defeq \V\otimes\KK$ and define
    \[ \calK(\KK) \defeq \big\{\alpha\in\Cle[(\V(\KK),\Q)]^\times \st \stdinv(\alpha) = -\alpha,~ \ClNrm(\alpha) \in \KK^\times \big\}, \]
    that is, $\calK(\KK)$ is the set of traceless elements in the even Clifford algebra, which have non-vanishing norm in $\KK$. By \Cref{lem:norm-implies-ClGr}, we know that $\calK(\KK)$ is a subset of $\sClGrV(\KK)$. Moreover, observe that for all $\alpha\in\calK(\KK)$ we have $\ClNrm(\alpha) = -\alpha^2\in\KK^\times$.
    As the notation suggests, $\calK(\KK)$ is the set of $\KK$-points of a quasi-affine variety $\calK$.
    
    \begin{proposition}[Klein Map]\label{prop:Klein-map}
		The map $\Klein\colon\varV[nd](\KK)\to\calK(\KK)$, defined by
		\begin{align} 
			v_1\wedge v_2\in\varV[nd](\KK) \mapsto \Klein(v_1\wedge v_2)\defeq [v_1,v_2] \in\calK(\KK),  \label{def:klein-map}
		\end{align} 
		is a well-defined bijection and is equivariant with respect to the respective $\sClGrV(\KK)$-actions.
        Moreover, if $v_3\wedge v_4$ is orthogonal to $v_1\wedge v_2$ then
        \begin{align}\label{eq:orth-klein}
            \Klein(v_3\wedge v_4) = z\Klein(v_1\wedge v_2) 
        \end{align}
        for some non-zero traceless element $z\in\Z$.
	\end{proposition}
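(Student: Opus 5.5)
The plan is to define everything on all of $\bigwedge^2\V$ first and then restrict. The map $(v_1,v_2)\mapsto[v_1,v_2]=v_1v_2-v_2v_1$ is $\KK$-bilinear and alternating, so it factors through a linear map $\tilde\Klein\colon\bigwedge^2\V\to\Cle$. In particular $\Klein(v_1\wedge v_2)$ depends only on the wedge. Since conjugation by $\alpha\in\sClGrV(\KK)$ is an algebra automorphism preserving $\V$, we get $\Adj_\alpha[v_1,v_2]=[\Adj_\alpha v_1,\Adj_\alpha v_2]$, which is the equivariance. From $\stdinv(v_1v_2)=v_2v_1$ we get $\stdinv([v_1,v_2])=-[v_1,v_2]$.

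For the norm, I would choose an orthogonal basis $v_1',v_2'$ of the plane spanned by $v_1,v_2$. This only changes the wedge by a scalar $\lambda\neq0$. For an orthogonal pair, $[v_1',v_2']=2v_1'v_2'$ and
\[(2v_1'v_2')^2=-4\Q(v_1')\Q(v_2').\]
Hence $\ClNrm(\Klein(v_1\wedge v_2))=-\Klein(v_1\wedge v_2)^2=4\disc(v_1\wedge v_2)$. This is non-zero exactly on $\varV[nd]$, so $\Klein$ indeed lands in $\calK(\KK)$.

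For bijectivity, fix an orthogonal basis $e_1,\dots,e_4$ of $\V$. Then $\tilde\Klein(e_i\wedge e_j)=2e_ie_j$, and these six elements are linearly independent by \eqref{eq:basis Clifford algebra}. They span the $(-1)$-eigenspace of $\stdinv$ on $\Cle$, since $\stdinv$ fixes $1$ and $e_1e_2e_3e_4$. Thus $\tilde\Klein$ is a linear isomorphism from $\bigwedge^2\V$ onto the traceless part of $\Cle$, and injectivity of $\Klein$ is immediate.

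For surjectivity, I must show that a traceless $\alpha=\tilde\Klein(w)$ with $\alpha^2\in\KK$ comes from a pure wedge; non-degeneracy then follows from the norm computation above. Write $w=\sum_{i<j}c_{ij}e_i\wedge e_j$ and expand $\alpha^2$ in the basis \eqref{eq:basis Clifford algebra}. Squares $(e_ie_j)^2$ are scalars. Products of terms sharing one index anticommute, so they cancel in pairs. Products of the complementary pairs $e_1e_2,e_3e_4$ (and likewise the other two complementary pairs) commute and each gives a multiple of $e_1e_2e_3e_4$. So
\[\alpha^2\in\KK+\KK\cdot\Pf'(w)\,e_1e_2e_3e_4,\]
where $\Pf'(w)$ is a weighted Pfaffian $\Q(e_3)\Q(e_4)c_{12}c_{34}-\Q(e_2)\Q(e_4)c_{13}c_{24}+\Q(e_2)\Q(e_3)c_{14}c_{23}$ (up to normalisation). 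This vanishes iff $w\wedge w=0$ for the correspondingly rescaled coordinates. I expect this bookkeeping to be the main obstacle: the signs and the $\Q(e_i)$-weights have to match the Plücker relation exactly. If it gets messy, I would instead pass to $\bar\KK$, diagonalise $\Q$ to the identity, and use that purity is a $\KK$-rational condition.

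For \eqref{eq:orth-klein}, again take an orthogonal basis $v_1,v_2$ of the first plane and $v_3,v_4$ of its orthogonal complement, so that $\omega\defeq v_1v_2v_3v_4$ is a non-zero scalar multiple of $e_1e_2e_3e_4$. Then
\[v_3v_4=(v_1v_2)^{-1}\omega=\frac{-v_1v_2\,\omega}{\Q(v_1)\Q(v_2)},\]
and up to non-zero scalars from rescaling the wedges this gives $\Klein(v_3\wedge v_4)=z\Klein(v_1\wedge v_2)$ with $z\in\KK^\times\omega$. Such $z$ is traceless in $\Z$, because the involution of $\Z$ sends $\omega\mapsto-\omega$.
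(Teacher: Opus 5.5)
Your proof works and takes a genuinely different route from the paper at the surjectivity step. However, the displayed weighted Pfaffian is wrong, and as written it does not detect purity. For $\Q=-x_1^2-x_2^2-x_3^2+x_4^2$, the pure wedge $(e_1+e_3)\wedge(e_2+e_4)=e_1\wedge e_2+e_1\wedge e_4-e_2\wedge e_3+e_3\wedge e_4$ gives your expression the value $-2$.

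The bookkeeping is simpler than you feared. The $e_1e_2e_3e_4$-component of $\alpha^2$ comes only from products $e_ie_je_ke_l$ with four distinct indices, so no factor $\Q(e_i)$ ever appears. With $\alpha=2\sum_{i<j}c_{ij}e_ie_j$ one gets
\[
\alpha^2\in\KK+8\,(c_{12}c_{34}-c_{13}c_{24}+c_{14}c_{23})\,e_1e_2e_3e_4,
\qquad
w\wedge w=2\,(c_{12}c_{34}-c_{13}c_{24}+c_{14}c_{23})\,e_1\wedge e_2\wedge e_3\wedge e_4 .
\]
Hence $\alpha^2\in\KK$ if and only if $w\wedge w=0$. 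This holds for any orthogonal basis, with no passage to $\overline{\KK}$. Your fallback would also have worked, since there all the weights equal $1$.

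With that fix, here is how the routes compare. The paper proves surjectivity conceptually. For $\alpha\in\calK(\KK)$, conjugation by $\alpha$ is a non-trivial involution of $V$. It cannot be $-\mathrm{Id}$, since otherwise $\alpha$ would be central, whereas the centre $\KK+\KK e_1e_2e_3e_4$ is fixed by the standard involution. So both eigenspaces are planes, and a non-isotropic $v$ in the $(-1)$-eigenspace gives the explicit inverse $\Klein^{-1}(\alpha)=\frac{1}{2\Q(v)}\,v\wedge v\alpha$.

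For the orthogonality claim, the paper notes that $[v_1,v_2][v_3,v_4]$ acts by conjugation as $-\mathrm{Id}$ on $V$, hence is central with trace zero in the centre. You instead invert $v_1v_2$ explicitly and use centrality of $\omega=v_1v_2v_3v_4\in\KK^\times e_1e_2e_3e_4$; this is an equally valid computation. Your norm computation via an orthogonal basis of the plane is also fine; the paper expands $[v_1,v_2]^2$ directly.

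Your Pfaffian identity is shorter. It shows directly that, under your linear isomorphism, the condition $\ClNrm(\alpha)\in\KK$ on traceless $\alpha$ is exactly the Pl\"ucker relation $w\wedge w=0$. The paper's route instead gives an explicit inverse map. It also describes the plane attached to $\alpha$ as the $(-1)$-eigenspace of $\Adj_\alpha$, with its orthogonal complement as the $(+1)$-eigenspace, and this is reused later, e.g.\ in the proof of Lemma~\ref{lem:compl of Klein}.
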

	\begin{proof}
        We may also consider $\Klein$ as a linear map $\Klein\colon\bigwedge^2\V\to\Cle$ given by extending $v_1\wedge v_2 \mapsto [v_1,v_2]$ linearly. Notice that $[\cdot,\cdot]$ is bilinear and antisymmetric so that $\Klein$ in this linear viewpoint is well-defined as a map.
        We have
		\[ \stdinv([v_1,v_2]) = \stdinv(v_1v_2) - \stdinv(v_2v_1) = v_2v_1 - v_1v_2 = -[v_1,v_2], \]
        so that in fact $\Klein\colon\bigwedge^2\V\to\{\alpha\in \Cle: \stdinv(\alpha)=-\alpha\}$.
        Moreover, for any pure $2$-wedge $v_1\wedge v_2\in\varV[](\KK)$  we have using \eqref{eq:char-property}
		\begin{align}\label{eq:norm-Klein-vector}
			\ClNrm(\Klein(v_1\wedge v_2)) &= [v_1,v_2]\stdinv([v_1,v_2]) = -[v_1,v_2]^2 \nonumber\\
            &= (2\Q(v_1)\Q(v_2) - v_1v_2v_1v_2 - v_2v_1v_2v_1) \nonumber\\
			&= 4\Q(v_1)\Q(v_2) - \bQ(v_1,v_2)(v_1v_2 + v_2v_1)\nonumber\\ 
			&= 4\Q(v_1)\Q(v_2) - \bQ(v_1,v_2)^2 \nonumber\\
            &= 4\disc(v_1\wedge v_2) \in\KK. 
		\end{align}
        For non-degenerate pure $2$-wedges $v_1\wedge v_2$ it follows that $\ClNrm(\Klein(v_1\wedge v_2))\in \KK^\times$. This shows that $\Klein$ is well-defined as map $\varV[nd](\KK)\to\calK(\KK)$.
		
        Equivariance follows as $\Adj_\alpha(x\wedge y) = \Adj_{\alpha}(x)\wedge\Adj_{\alpha}(y)$ (by definition) and
		\[ \Adj_\alpha([x,y]) = [\Adj_\alpha(x),\Adj_\alpha(y)] \]
        for all $\alpha\in\sClGrV(\KK)$ and $x,y\in\Cl[(\V(\KK),\Q)]$.

        We now prove that \eqref{def:klein-map} is a bijection. It is the restriction of
        \[ \Klein: \textstyle{\bigwedge^2}\V\to\{\alpha\in \Cle: \stdinv(\alpha)=-\alpha\}, \] which is a bijection. Indeed, it is linear and maps a basis to a basis, since for an orthogonal basis $\{e_1,e_2,e_3,e_4\}$ of $\V$ we have $\Klein(e_i\wedge e_j) = 2e_ie_j$. It remains then to show that $\Klein^{-1}(\alpha)\in\varV[nd](\KK)$ for any $\alpha\in\calK(\KK)$. 
        
        So let $\alpha\in\calK(\KK)$ and note that since $\alpha^2 = -\ClNrm(\alpha)\in\KK^\times$ the action of $\alpha$ on $\V(\KK)$ by conjugation yields an involutory, non-trivial element $\rest{\Adj_{\alpha}}{\V} \in \SO_\Q(\KK)$ (see also \eqref{eq:ses-of-sClGr}). In particular, the only eigenvalues of $\rest{\Adj_{\alpha}}{\V}$ can be $\pm 1$. 
        We claim that both eigenspaces have to be two-dimensional. Clearly, $\rest{\Adj_{\alpha}}{\V}$ is not the identity on $\V(\KK)$ as $\alpha$ is not a scalar. Note that traceless elements of $\Z\otimes\KK$ act as $-\Id_{\V}$ on $\V(\KK)$ as can be checked using an orthogonal basis of $\V(\KK)$. 
        Thus, if $\rest{\Adj_{\alpha}}{\V}$ were equal to $-\Id_{\V}$ then $\rest{\Adj_{z\alpha}}{\V} = \Id_{\V}$ for any non-trivial traceless element $z\in\Z\otimes\KK$ and so in particular, $\alpha \in \Z\otimes\KK$ contradicting that $\stdinv(\alpha)=-\alpha$. Thus, the $(+1)$- and $(-1)$-eigenspaces of $\rest{\Adj_{\alpha}}{\V}$ need to be two-dimensional (and orthogonal to each other).
        
        We define the two-dimensional subspace of $\V(\KK)$ associated to $\alpha$ as
        \[ \L_{\alpha} \defeq \{v\in\V(\KK)\st \Adj_{\alpha}(v) = -v \}. \]
        That is, $\alpha v = -v \alpha$ for $v\in \L_\alpha$. 
        The subspace $\L_{\alpha}$ is non-degenerate because it has an orthogonal complement (namely the $(+1)$-eigenspace of $\rest{\Adj_{\alpha}}{\V}$) and $\Q$ is non-degenerate. If we pick any non-isotropic vector $v\in\L_{\alpha}$, then $\{v, \Q(v)^{-1}v\alpha\}$ is a basis of $\L_{\alpha}$. Indeed, for any $v\in\L_{\alpha}$ we get that $v\alpha$ is in $\V$ by using $\stdinv(v\alpha) = -\stdinv(\alpha)v = \alpha v = -v\alpha$ and \eqref{eq:char-of-V}. We therefore have $\Klein^{-1}(\alpha) = \tfrac{1}{2\Q(v)} v\wedge v\alpha$.

        For the last claim of the proposition, let $v_3\wedge v_4\in\varV[nd](\KK)$ be orthogonal to $v_1\wedge v_2$.
        The above geometric observations imply that the action of $[v_1,v_2]$ on $\V(\KK)$ is given by
        \[ \Adj_{[v_1,v_2]}(v_1) = -v_1\quad \Adj_{[v_1,v_2]}(v_2) = -v_2 \quad  \Adj_{[v_1,v_2]}(v_3) = v_3\quad \Adj_{[v_1,v_2]}(v_4) = v_4 \]
        and similarly for $\Adj_{[v_3,v_4]}$.
        In particular, $[v_1,v_2][v_3,v_4]$ acts as $-\Id_\V$ on $\V(\KK)$ and is hence central, traceless, and non-zero as explained above. Since $[v_1,v_2]^2 = -4\disc(v_1\wedge v_2)$ by \eqref{eq:norm-Klein-vector}, the claim follows for 
        \begin{align}\label{eq:KL-to-KoL}
            z = -\frac{[v_1,v_2][v_3,v_4]}{4\disc(v_1\wedge v_2)}
        \end{align}
        and thus the proposition follows as well.
	\end{proof}

    For any $\nu\in\varV[nd](\KK)$ let us denote the stabilizer subgroup of $\nu$ in $\SpinV$ by $\HH_{\nu}$, that is
    \[ \HH_{\nu} \defeq \{\alpha\in\SpinV \st \Adj_\alpha(\nu) = \nu \}. \]
    We denote the stabilizer subgroup of an element $\beta\in\calK(\KK)$ by
    \[ \HH_\beta \defeq \{\alpha\in\SpinV\st \Adj_\alpha(\beta) = \beta \}. \]
    These are $\KK$-tori in $\SpinV$ of absolute rank $2$ and in particular maximal.
    
    Note that if $\nu'$ is orthogonal to $\nu$, then $\HH_{\nu'} = \HH_{\nu}$.
    Indeed, the spin group acts by orientation-preserving transformations of $\V$ and if the orientation is preserved on the plane corresponding to $\nu$ it also has to be preserved on the complement. 
    Alternatively, this is also a consequence of \Cref{prop:Klein-map}. Indeed, by equivariance we have $\HH_\nu = \HH_{\Klein(\nu)}$ for any $\nu\in\varV[nd](\KK)$.

    Given a non-degenerate plane $ L \subset V$, there is a natural embedding of the Clifford algebra $\Cliff(\L,\Q|_\L)$ into the Clifford algebra $\Cl$ given by the inclusion $L \hookrightarrow V$.
    Under this embedding, the special Clifford group for $L$ is mapped into the special Clifford group for $\V$ as any $\alpha$ in this image preserves $L$ under conjugation (by definition) and preserves $L^\perp$ (as it commutes with it); the same applies to the spin group.
    In the following, the above inclusions are implicit and we write $\sClGr_{\L},\Spin_{\L}$ for the respective groups for simplicity.
 
    \begin{corollary}\label{cor:stabilizer-subgroup}
        If $\L$ is the plane in $\V$ defined by $\nu$ and $\oL$ is its orthogonal complement, then we have
    	\begin{align}\label{eq:char-stabilizer}
    	   \HH_{\nu}(\KK) = \{\beta\beta'\in\SpinV(\KK) \st \beta\in\sClGr_{\L}(\KK), \beta'\in\sClGr_{\oL}\!(\KK) \text{ with } \ClNrm(\beta)\ClNrm(\beta') = 1\}. 
    	\end{align}
        In particular, $\HH_{\nu}$ is isogenous to $\Spin_{\L}\times \Spin_{\oL}$.
        The natural action of an element $\beta\beta'\in\HH_{\nu}(\KK)$ on $\L$ is given by $\ClNrm(\beta')\beta^2 \in \Spin_L(\KK)$, $x\mapsto\ClNrm(\beta')\beta^2 x$.
    \end{corollary}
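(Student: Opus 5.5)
We prove the two inclusions in \eqref{eq:char-stabilizer} separately and then read off the isogeny and the action on $\L$. Write $\nu=v_1\wedge v_2$ with $\L=\operatorname{span}(v_1,v_2)$, and let $\oL=\operatorname{span}(v_3,v_4)$. By equivariance of the Klein map (\Cref{prop:Klein-map}), $\HH_\nu(\KK)$ is the centralizer in $\SpinV(\KK)$ of $\Klein(\nu)=[v_1,v_2]$. Since $\Klein(\nu)$ is a nonzero multiple of $v_1v_2$ (after choosing $v_1\perp v_2$), this is the same as the centralizer of $v_1v_2$.

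For ``$\supseteq$'', take $\beta\in\sClGr_{\L}(\KK)$ and $\beta'\in\sClGr_{\oL}(\KK)$. Both are even. Elements of $\Cle(\L)$ commute with $\oL$, and elements of $\Cle(\oL)$ commute with $\L$, because a vector anticommutes with each orthogonal vector and even products contain an even number of such factors. Hence $\beta\beta'$ preserves $\L$ and $\oL$ under conjugation, and so lies in $\sClGrV$. Next, $\beta$ and $\beta'$ commute, and their norms $\ClNrm(\beta),\ClNrm(\beta')$ are scalars. Therefore $\ClNrm(\beta\beta')=\beta\beta'\stdinv(\beta')\stdinv(\beta)=\ClNrm(\beta)\ClNrm(\beta')$, and $\beta\beta'\in\SpinV$ exactly when this product is $1$. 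Finally, $\beta\beta'$ preserves $\L$, and restricted to $\L$ it acts through $\SO(\L)$, which is orientation-preserving. Hence it fixes $\nu$ (the determinant on the $2$-plane is $1$).

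For ``$\subseteq$'', take $\alpha\in\HH_\nu(\KK)$. Then $\Adj_\alpha$ preserves $\L$ with determinant $1$ there. It therefore preserves $\oL$, also with determinant $1$, since the total determinant is $1$. So $\Adj_\alpha|_{\V}=(g,g')\in\SO(\L)\times\SO(\oL)$. Using the exact sequence \eqref{eq:ses-of-sClGr} for the planes $\L$ and $\oL$, choose $\beta\in\sClGr_{\L}(\KK)$ inducing $g$ on $\L$ and $\beta'\in\sClGr_{\oL}(\KK)$ inducing $g'$. By the commutation facts above, $\beta$ acts trivially on $\oL$ and $\beta'$ acts trivially on $\L$. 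Thus $\beta\beta'$ induces $\Adj_\alpha$ on $\V$, and by \eqref{eq:ses-of-sClGr} for $\V$ we get $\alpha=c\beta\beta'$ with $c\in\KK^\times$. Replace $\beta$ by $c\beta$. Then $\ClNrm(\alpha)=1$ gives the norm condition, which proves \eqref{eq:char-stabilizer}.

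The isogeny follows from the map $\Spin_{\L}\times\Spin_{\oL}\to\HH_\nu$, $(\beta,\beta')\mapsto\beta\beta'$. It has finite kernel $\{(\pm1,\pm1)\}$, i.e. $\{(\epsilon,\epsilon)\}$. Over $\bar\KK$ it is surjective, since every element of $\HH_\nu$ can be rescaled as $(\lambda\beta)(\lambda^{-1}\beta')$ so that both norms equal $1$. A dimension count confirms this: both sides are $2$-dimensional tori.

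For the action on $\L$: for $x\in\L$ we have $\beta\beta' x(\beta\beta')^{-1}=\beta x\beta^{-1}$. Now $\Cle(\L)=\KK\oplus\KK v_1v_2$ is commutative, and for $x\in\L$ we have $x\,v_1v_2=-v_1v_2\,x$. This gives $x\beta=\stdinv(\beta)x$ for $\beta\in\Cle(\L)$. Hence $\beta x\beta^{-1}=\beta x\,\stdinv(\beta)/\ClNrm(\beta)=\beta^2x/\ClNrm(\beta)=\ClNrm(\beta')\beta^2x$, using $\ClNrm(\beta)\ClNrm(\beta')=1$. Its norm is $\ClNrm(\beta')^2\ClNrm(\beta)^2=1$, so it lies in $\Spin_{\L}(\KK)$.

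The main obstacle is the ``$\subseteq$'' direction. There one must keep careful track of the scalar ambiguity from \eqref{eq:ses-of-sClGr}, and make sure that lifts for the sub-planes exist over $\KK$ itself, not just over $\bar\KK$. This is what forces us to work with $\sClGr$ rather than $\Spin$ for the factors.
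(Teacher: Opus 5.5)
Your proof is correct and follows the paper's argument: you lift $\Adj_\alpha|_{\L}$ and $\Adj_\alpha|_{\oL}$ through \eqref{eq:ses-of-sClGr}, absorb the scalar discrepancy into $\beta$, and compute the action on $\L$ via $\beta x = x\stdinv(\beta)$. The only minor deviation is the isogeny: you use the explicit multiplication map $\Spin_{\L}\times\Spin_{\oL}\to\HH_\nu$, with kernel $\{(\epsilon,\epsilon)\}$ and surjectivity on $\bar\KK$-points by rescaling, whereas the paper uses Zariski density of rational points and a dimension comparison, so your version is slightly more explicit.
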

    
    \begin{proof}       
    	We assume $\KK = \QQ$ to simplify notation. 
        Due to orthogonality, the subgroups $\sClGr_{\L}$ and $\sClGr_{\oL}$ commute and $\sClGr_{\L}\sClGr_{\oL}$ is a subgroup of $\sClGrV$. 
        Notice that $\sClGr_{\L}(\QQ)$ preserves $\L$ and $\nu$ (see \eqref{eq:ses-of-sClGr}) and acts trivially on $\oL$ as $vw=-wv$ for any $v \in \L$ and $w \in \oL$.         
        Hence, the intersection $\sClGr_{\L}(\QQ)\sClGr_{\oL}(\QQ)\cap\SpinV(\QQ)$ is a subgroup of $\HH_{\nu}(\QQ)$ and, by Zariski density, $(\sClGr_{\L}\sClGr_{\oL}) \cap\SpinV \subseteq \HH_{\nu}$.
        By dimension comparison, equality holds and, in particular,  $\HH_{\nu}$ is isogenous to $\Spin_{\L}\times \Spin_{\oL}$.
        Notice that \eqref{eq:char-stabilizer} claims a decomposition for $\QQ$-points (recall that $\KK = \QQ$ for simplicity).
        
        To get the decomposition, observe first that $\HH_{\nu}(\QQ)$ preserves $\L$ and $\oL$ by definition.
        Thus, for $\alpha\in\HH_{\nu}(\QQ)$ we have $\rest{\Adj_\alpha}{\L} \in \OO_{\Q|_{L}}(\QQ)$ and $\rest{\Adj_\alpha}{\oL} \in \OO_{\Q|_{\oL}}\!(\QQ)$.
    	In fact, since $\alpha$ preserves $\nu$, we get $\rest{\Adj_\alpha}{\L} \in \SO_{\Q|_L}(\QQ)$ and so also $\rest{\Adj_\alpha}{\oL} \in \SO_{\Q|_{\oL}}\!(\QQ)$.
        Using \eqref{eq:ses-of-sClGr} we can pick preimages $\beta\in\sClGr_{\L}(\QQ)$ and $\beta'\in\sClGr_{\oL}\!(\QQ)$ whose conjugation actions project to $\rest{\Adj_\alpha}{\L}$ and $\rest{\Adj_\alpha}{\oL} $, respectively.
        Then $\alpha$ and $\beta\beta'$ have the same image in $\SO_\Q$ and, thus, differ by a scalar multiple, say $a\in\QQ^\times$. 
        We have found a decomposition $\alpha = (a\beta)\beta'\in\sClGr_{\L}(\QQ)\sClGr_{\oL}\!(\QQ)$ where $\ClNrm(a\beta)\ClNrm(\beta') = \ClNrm(\alpha) = 1$. This establishes \eqref{eq:char-stabilizer}.
        
        For the last assertion, note first that the action of $\beta\beta'\in\HH_\nu(\QQ)$ on $x\in\L$ is given by
        \[ \beta\beta' x (\beta\beta')^{-1} = \beta x \frac{\stdinv(\beta)}{\ClNrm(\beta)} = \frac{\beta^2}{\ClNrm(\beta)}x = \ClNrm(\beta')\beta^2x, \]
        where the first equality follows since $x$ commutes with $\beta'\in\sClGr_{\oL}(\QQ)$, and since $\beta^{-1}=\stdinv(\beta)\ClNrm(\beta)^{-1}$, the second from $\beta x = x\stdinv(\beta)$ and the third from $\ClNrm(\beta)\ClNrm(\beta')=1$.
        The \namecref{cor:stabilizer-subgroup} follows.
    \end{proof}

\subsubsection{Klein vectors for oriented planes}\label{sec:Klein for planes}

    Using the Klein map, we associate to every non-degenerate rational oriented plane $\L$ in the vector space $\V$ a Klein vector with \highlight{integral} coefficients.
    To do this, fix a $\ZZ$-lattice $\VZ$ in $\V$ of full rank with $\Q(\VZ)\subset\ZZ$.
    This gives rise to a lattice of integral points in the Clifford algebra,
    \begin{align}\label{eq:Cl0(V(Z))}
        \Cle[(\VZ,\Q)] \subseteq \Cl,
    \end{align}
    that is, the Clifford algebra associated to the quadratic $\ZZ$-module $(\VZ,\Q)$.
    (Notice that the construction of Clifford algebras over free $\ZZ$-modules of finite rank is completely analogous to the one given after \Cref{def:clifford-alg}.)
    Let $\L\subseteq\V$ be a non-degenerate (necessarily rational) oriented plane. We pick an oriented $\ZZ$-basis $(v_1,v_2)$ of the two-dimensional oriented $\ZZ$-lattice $\L(\ZZ) \defeq \L\cap\VZ$. 
    This defines the pure wedge $v_1\wedge v_2\in\varV[nd]$ which corresponds to $\L$. Then we define the \highlight{Klein vector} associated to $\L$ by
    \[ \Klein(\L) \defeq \Klein(v_1\wedge v_2). \]
    The definition is independent of the choice of (oriented) $\ZZ$-basis by \Cref{prop:Klein-map}. Clearly, we get that $\Klein(\L)\in\Cle[(\VZ,\Q)]$ is a (traceless) vector with integral coefficients. We define the stabilizer subgroup of $\L$ as $\HH_{\L}\defeq\HH_{v_1\wedge v_2}<\SpinV$. Moreover, we define the pointwise stabilizer subgroup of $\L$ as
    \begin{align}\label{eq:defHLpt}
    \HH_{\L}^{pt} \defeq \{\alpha\in\HH_{\L}\st \alpha v \alpha^{-1} = v \text{ for all } v\in\L\}.
    \end{align}
    This is a $\QQ$-torus of absolute rank $1$.

    Given an oriented plane $\L$, we equip the orthogonal complement $\oL$ with an orientation as follows: if $(v_1,v_2)$ is an oriented basis of $\L$ and $(v_3,v_4)$ is an oriented basis of $\oL$, then $(v_1,\ldots,v_4)$ is an oriented basis of $\V$.

    There is a relation between the discriminant of a rational plane $\L\subseteq\V$ and the length of the associated Klein vector.

    \begin{lemma}\label{lem:length-of-Klein-vector}
		For any non-degenerate oriented plane $\L\subseteq\V$ we have 
        \begin{align*}
            \ClNrm(\Klein(\L)) = 4\disc(\L).
        \end{align*}
	\end{lemma}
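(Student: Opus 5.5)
This is essentially a direct consequence of the computation \eqref{eq:norm-Klein-vector} in the proof of \Cref{prop:Klein-map}; we simply specialize it to the oriented $\ZZ$-basis used to define $\Klein(\L)$.

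First, fix an oriented $\ZZ$-basis $(v_1,v_2)$ of $\L(\ZZ)=\L\cap\VZ$. By definition, $\Klein(\L)=\Klein(v_1\wedge v_2)=[v_1,v_2]$, and this does not depend on the choice of oriented basis. Since $\L$ is non-degenerate, $v_1\wedge v_2\in\varV[nd](\QQ)$.

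Second, the chain of identities \eqref{eq:norm-Klein-vector} gives
\[ \ClNrm([v_1,v_2]) = 4\Q(v_1)\Q(v_2)-\bQ(v_1,v_2)^2 = 4\disc(v_1\wedge v_2). \]
This chain uses only $\stdinv([v_1,v_2])=-[v_1,v_2]$ and the relations \eqref{eq:universal-property} and \eqref{eq:char-property}.

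Third, we identify $\disc(v_1\wedge v_2)$ with $\disc(\L)$. The discriminant of $\L$ is by definition the discriminant of the binary form $\Q|_{\L(\ZZ)}$. With respect to the basis $(v_1,v_2)$, its Gram matrix is $\big(\bQ(v_i,v_j)/2\big)_{i,j}$, whose diagonal entries are $\Q(v_i)$. Its determinant is exactly $\disc(v_1\wedge v_2)$. A different $\ZZ$-basis changes the Gram matrix by $g^t(\cdot)g$ with $g\in\GL_2(\ZZ)$, so the determinant is unchanged since $\det(g)^2=1$.

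The only point needing care is this normalization: the paper's convention $\disc=\det(\bQ/2)$ must be the one entering $\disc(\L)$. This is consistent with the Hermitian model in the introduction, where $\det(\Klein(L))=4\disc(L)$. Apart from that, there is no real obstacle.
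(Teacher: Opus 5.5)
Your proposal is correct and is the same argument as the paper's: the paper's proof just cites the computation \eqref{eq:norm-Klein-vector}, applied to an oriented $\ZZ$-basis $(v_1,v_2)$ of $\L(\ZZ)$. Your additional check that $\disc(v_1\wedge v_2)=\disc(\L)$, using the convention $\det(\bQ/2)$ fixed in \S\ref{sec:prel-clifford-alg}, is exactly the identification the paper leaves implicit.
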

	\begin{proof}
		This is a direct consequence of \eqref{eq:norm-Klein-vector}.
	\end{proof}

    \begin{remark}\label{rem:special-ClGr}
        For later use we remark that as $\Klein(\L)$ generates the two-dimensional $\QQ$-algebra $\Cle[(\L,\QL)]$, we get
        \begin{align}\label{eq:special-ClGr}
            \sClGr_{\L}(\KK) = \Cle[(\L,\QL)]^\times = \{a + b\Klein(\L)\st a^2 + 4\disc(\L)b^2\neq 0,~ a,b\in\KK\}.
        \end{align}
        Thus, $\sClGr_{\L}$ is a $\QQ$-split torus if and only if $-\disc(\L)$ is a square. In particular, this gives a concrete description of $\HH_\L(\KK)$ via \eqref{eq:char-stabilizer}. 
    \end{remark}
    
\subsection{Comparison of constructions of Klein vectors} \label{subsec:equivalences-of-Klein-vector-constructions}
    The construction of the Klein map using Clifford algebras in \Cref{prop:Klein-map} is a natural generalization of the construction of the Klein vectors in \cite{aka_planes_2021}.

    We quickly recall the setting of \cite{aka_planes_2021}. The underlying quadratic form $\Q_0$ on $\QQ^4$ is the sum of squares $x_1^2 + x_2^2 + x_3^2 + x_4^2$. This form is realized as the norm form on the $\QQ$-algebra $B_0 = \left(\frac{-1,-1}{\QQ}\right)$ of Hamiltonian quaternions. Denote by $\conj\colon B_0\to B_0$ the canonical conjugation of the quaternion algebra $B_0$. For a non-degenerate rational plane $\L\subseteq B_0$ two associated Klein vectors $a_1(\L)$ and $a_2(\L)$ are defined as
    \begin{align*}
    	a_1(\L) \defeq v_1\conj[v_2] - \tfrac{1}{2}\Tr(v_1\conj[v_2]) =\tfrac{1}{2}(v_1\conj[v_2] - v_2\conj[v_1]),\\
    	a_2(\L) \defeq \conj[v_2]v_1 - \tfrac{1}{2}\Tr(\conj[v_2]v_1) = \tfrac{1}{2}(\conj[v_2]v_1 - \conj[v_1]v_2),
    \end{align*}
    where $\{v_1,v_2\}$ is a basis of $\L(\ZZ)$.
    
    Observe that the embedding $\iota:B_0\to\Mat[2](B_0)$ (of $\QQ$-vector spaces and not of $\QQ$-algebras) given by $\iota(v) = \smat{}{v}{\conj[v]}{}$ satisfies
    \begin{align*}
        \iota(v)^2 = \begin{pmatrix}&v\\\conj[v]&\end{pmatrix}\begin{pmatrix}&v\\\conj[v]&\end{pmatrix} = \Q(v) \begin{pmatrix}1&\\&1\end{pmatrix}.
    \end{align*}
    Thus, by the universal property \eqref{eq:universal-property} of the Clifford algebra there is a homomorphism $\iota'\colon\Cliff(B_0,\Q_0)\to\Mat[2](B_0)$ of $\QQ$-algebras satisfying $\iota'(v) = \iota(v)$.
    By dimension comparison, $\iota'$ is an isomorphism.
    Moreover, we get
    \begin{align}
        \iota'(\Klein(\L)) &= \iota'([v_1,v_2])= \begin{pmatrix}&v_1\\\conj[v_1]&\end{pmatrix}\begin{pmatrix}&v_2\\\conj[v_2]&\end{pmatrix} - \begin{pmatrix}&v_2\\\conj[v_2]&\end{pmatrix}\begin{pmatrix}&v_1\\\conj[v_1]&\end{pmatrix}\nonumber\\
        &=  \begin{pmatrix}v_1\conj[v_2] - v_2\conj[v_1]&\\&\conj[v_1]v_2 -\conj[v_2]v_1&\end{pmatrix} = \begin{pmatrix}2a_1(\L)&\\& -2a_2(\L)\end{pmatrix}.\label{eq:iota(Klein)}
    \end{align}
    This shows that the Klein map in \Cref{prop:Klein-map} indeed generalizes the construction of the Klein vectors in \cite{aka_planes_2021}.
      
    In fact, one can extend the above setup to quadratic forms whose discriminant is \highlight{not a square} in $\QQ^\times$ by using quaternion algebras over \highlight{quadratic fields} as follows. Let $B$ be a quaternion algebra over $\QQ$. 
    We write $\B$ for the affine variety $B$ defined by $\B(A) = B\otimes A$ for any $\QQ$-algebra $A$. Let $F/\QQ$ be a quadratic extension and write $\tau \in \mathrm{Gal}(F/\QQ)$ for the non-trivial element. The Galois group acts on $\mathbf{B}(F)$ in the obvious way; we denote it by $x \mapsto \GalInv{x}$.
    Set
    \begin{align*}
        \V = \{x \in \mathbf{B}(F): \GalInv{x} = -\bar{x} \}.
    \end{align*}
    As we see below, this is a four-dimensional vector space over $\QQ$. Let $\Q$ be the restriction of the norm form on $\B(F)$ to $\V$. Note that for any $v \in\V$
    \begin{align*}
        \GalInv{\Q(v)} = \GalInv{v} \GalInv{\conj[v]} = \conj[v] v = v \conj[v] = \Q(v)
    \end{align*}
    and so $\Q$ takes values in $\QQ$. Explicitly, if for $a,b \in \QQ^\times$
    \begin{align*}
        B = \left(\frac{a,b}{\QQ}\right)
    \end{align*}
    with generators $j_1$, $j_2$, and $j_3 = j_1j_2$ and if $F = \QQ(\sqrt{d})$, then
    \begin{align*}
        \V = \QQ\sqrt{d} 1 + \QQ j_1 + \QQ j_2 + \QQ j_3
    \end{align*}
    and the quadratic form is given by
    \begin{align}\label{eq:explicit-qqf}
        \Q\big(x_0\sqrt{d} + x_1 j_1 + x_2 j_2 + x_3 j_3\big) = d x_0^2 - a x_1^2 -b x_2^2 + ab x_3^2.
    \end{align}
    In particular, the discriminant of $\Q$ is not a square and is equal to the discriminant of $F$ up to squares. In fact, any integral quaternary quadratic form of non-square discriminant is rationally equivalent to a quadratic form as above (but not integrally, in general).
    
    We again consider the embedding $\iota:\V\to\Mat[2](\B(F))$ given by $v\mapsto \smat{}{v}{\conj[v]}{}$ and note that the image is contained in the subalgebra
    \begin{align}\label{eq:emb-of-V}
        \left\{\begin{pmatrix}x_0&x_1\\-\GalInv{x_1}&\GalInv{x_0}\end{pmatrix}\st x_0,x_1\in\B(F)\right\} \subseteq \Mat[2](\B(F)),
    \end{align}
    since $\conj[v] = -\GalInv{v}$ by definition of $\V$. The same calculation as above gives $\iota(v)^2 = \Q(v)\smat{1}{}{}{1}$ and, thus, by the universal property \eqref{eq:universal-property} of the Clifford algebra there is an isomorphism of $\QQ$-algebras
    \begin{align}\label{eq:galois-twist-clalg}
        \iota'\colon\Cl\to\left\{\begin{pmatrix}x_0&x_1\\-\GalInv{x_1}&\GalInv{x_0}\end{pmatrix}\st x_0,x_1\in\B(F)\right\}
    \end{align}  
    with $\iota'(v) = \iota(v)$. A similar identity as in \eqref{eq:iota(Klein)} also holds.
    By restriction of $\iota'$, we have an isomorphism of algebras
    \begin{align}\label{eq:galois-twist-clealg}
        \iota'\colon\Cle \to \left\{\begin{pmatrix}x_0&\\&\GalInv{x_0}\end{pmatrix}\st x_0\in\B(F)\right\} \simeq \B(F)
    \end{align}
    and an isomorphism of vector spaces
    \begin{align*}
        \iota'\colon\Clo \to \left\{\begin{pmatrix}&x_1\\-\GalInv{x_1}&\end{pmatrix}\st x_1\in\B(F)\right\} \simeq \B(F).
    \end{align*}
    Note that for any $x\in \B(F)^\times$ and $v\in\V$
    \begin{align*}
    \begin{pmatrix}
    x & 0 \\ 0 & \GalInv{x}
    \end{pmatrix}
    \begin{pmatrix}
    0 & v \\ -\GalInv{v} & 0
    \end{pmatrix}
    \begin{pmatrix}
    x & 0 \\ 0 & \GalInv{x}
    \end{pmatrix}^{-1}
    =\begin{pmatrix}
    0 & xv\GalInv{x}^{-1} \\ -\GalInv{(xv\GalInv{x}^{-1})} & 0
    \end{pmatrix}.
    \end{align*}
    Therefore, under the identifications in \eqref{eq:galois-twist-clalg} and \eqref{eq:galois-twist-clealg}, the action of $g\in \B(F)^\times$ on $\V$ is given by 
    \begin{align}\label{eq:twistedGaloisaction}
    g.v = gv(\GalInv{g}^{-1})
    \end{align}
    for all $v\in\V$.

    Via \eqref{eq:galois-twist-clealg}, the involution $\sigma$ induces the standard involution on $\B(F)$ (see e.g. Proposition~\ref{prop:structure-of-Cle}).
    Therefore, $\ClNrm$ corresponds to the norm form on $\B(F)$ and $\SpinV(\QQ)$ is identified with $\B^1(F)$.

\subsection{Specialization to our standard signature \texorpdfstring{$(1,3)$}{(1,3)}-form}\label{sec:specialization}
    We consider now $\V = \QQ^4$ equipped with the quadratic form
    \[ \Q(x_1,x_2,x_3,x_4) = -x_1^2 - x_2^2 - x_3^2 + x_4^2, \]
    and the integral structure $\VZ = \ZZ^4 = \langle e_1,e_2,e_3,e_4\rangle_\ZZ$.
    Define
    \begin{align*}
        \SpinV(\ZZ) &= \big\{\alpha\in\SpinV(\QQ)\st \Adj_{\alpha}(\VZ) = \VZ\big\}
    \end{align*}
    and similarly for $\SpinV(\ZZ_p)$ for every prime $p$.
    In view of \eqref{eq:explicit-qqf}, we take $F=\QQ(i)$ and $\B=\Mat[2]=\big(\frac{1,1}{\QQ}\big)$ in the previous section.
    Via the linear map given by
    \begin{align}\label{eq:basis over Mat}
        e_1 \mapsto\mat{i}{}{}{i},\ e_2 \mapsto \mat{1}{}{}{-1},\ e_3 \mapsto\mat{}{1}{1}{},\ e_4 \mapsto\mat{}{1}{-1}{},
    \end{align}
    the quadratic space $(\V,\Q)$ is identified with the $\QQ$-subspace $\{\GalInv{x} = -\bar{x}\}$ of the quaternion algebra $\Mat[2](\QQi)$ equipped with the quadratic form $\det$.
    Under this identification we have
    \begin{align}\label{eq:integral-points}
        \VZ &=
            \Big\{ \begin{pmatrix}
            x_1i + x_2 & x_3 + x_4 \\ x_3-x_4 & x_1i - x_2
            \end{pmatrix}: x_1,x_2,x_3,x_4 \in \ZZ \Big\}.
    \end{align}
    Relating to the introduction, note that $\V$ is identified with the space $\mathcal{H}$ of Hermitian matrices in equivariant manner by right-multiplication with $\smat{0}{1}{-1}{0}$.
    It is more convenient for us to work with $V$ instead of $\mathcal{H}$.
    
    We consider $(e_1,e_2,e_3,e_4)$ as an oriented basis of $\V$. We obtain that $\Cle\simeq\Mat[2](\QQi)$ and $\SpinV(\QQ) \simeq \SL_2(\QQi)$ by \eqref{eq:galois-twist-clealg}. 
    In fact, this gives an isomorphism of $\QQ$-groups
	\begin{align}\label{eq:spin-iso-sl}
        \SpinV &\to \Res{\QQi}(\SL_2).
	\end{align}
    (Strictly speaking, one ought to extend the discussion of \S\ref{subsec:equivalences-of-Klein-vector-constructions} from $\QQ$-points to $A$-points for any $\QQ$-algebra $A$.)
    The following \namecref{prop:iso-cliff} adds integrality information into this picture.
   
    \begin{proposition}\label{prop:iso-cliff}
	The image of $\SpinV(\ZZ)$ under \eqref{eq:spin-iso-sl} is conjugate to $\SL_2(\ZZi)$.
    Explicitly, it is given by
    \begin{align}\label{eq:ZZ-points}
    \mat{1+i}{1}{0}{1} \SL_2(\ZZi) \mat{1+i}{1}{0}{1}^{-1}.
        \end{align}
	\end{proposition}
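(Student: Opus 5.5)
Under \eqref{eq:spin-iso-sl}, $g\in\SL_2(\QQi)$ acts on $\V\subset\Mat[2](\QQi)$ by the twisted action \eqref{eq:twistedGaloisaction}, $g.v=gv\GalInv{g}^{-1}$, where $\GalInv{\cdot}$ is complex conjugation applied entrywise. Since $\SpinV(\ZZ)$ is by definition the stabilizer of $\VZ$, we must identify
\[ \Gamma\defeq\{g\in\SL_2(\QQi)\st g\VZ\GalInv{g}^{-1}=\VZ\} \]
and show that $\Gamma=h\SL_2(\ZZi)h^{-1}$ with $h=\smat{1+i}{1}{0}{1}$. Conjugating, this amounts to showing that $\SL_2(\ZZi)$ is exactly the stabilizer of the lattice
\[ \Lambda\defeq h^{-1}\VZ\,\GalInv{h} \]
under the same twisted action. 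Indeed, $g\in\Gamma$ if and only if $h^{-1}gh$ stabilizes $\Lambda$ under $x\mapsto x'x\GalInv{x'}^{-1}$.

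\textbf{Step 1: compute $\Lambda$.} Using \eqref{eq:integral-points}, compute $\Lambda$ explicitly. I expect it to have a clean description, for instance as the set of $\ZZi$-matrices $x$ with $\GalInv{x}=-\bar x$ cut out by a simple parity condition modulo $(1+i)$. The factor $1+i$ is there to absorb the mismatch between the entries $x_3\pm x_4$ and $x_1i\pm x_2$, which are only congruent modulo $(1+i)$. Equivalently, after right multiplication by $\smat{0}{1}{-1}{0}$, the lattice $\Lambda$ should become the lattice of Hermitian matrices of the form $\smat{a}{\beta}{\bar\beta}{d}$ with $a,d\in\ZZ$ and $\beta\in\ZZi$, possibly subject to a parity condition.

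\textbf{Step 2: $\SL_2(\ZZi)\subseteq\operatorname{Stab}(\Lambda)$.} Verify this on generators: $\smat{1}{1}{0}{1}$, $\smat{1}{i}{0}{1}$, $\smat{0}{-1}{1}{0}$ and $\operatorname{diag}(i,-i)$. Alternatively, show that the integral Hermitian lattice $\{\smat{a}{\beta}{\bar\beta}{d}\}$ is preserved by $x\mapsto gxg^{\mathsf H}$ for every $g\in\SL_2(\ZZi)$, which is immediate, and then check that any parity condition is also preserved.

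\textbf{Step 3: reverse inclusion.} Let $g$ stabilize $\Lambda$. Then $g\in\SL_2(\QQ(i))$ preserves the $\ZZi$-span of $\Lambda$, together with the $\ZZi$-order generated by products $x\GalInv{y}^{-1}$-type elements, or more simply the $\ZZi$-span of the products $\Lambda\cdot\GalInv{\Lambda}$. Show that this $\ZZi$-span is $\Mat[2](\ZZi)$ itself, or at least an order whose normalizer in $\SL_2(\QQi)$ is easy to control. Conjugation by $g$ on this span gives $g\Mat[2](\ZZi)g^{-1}=\Mat[2](\ZZi)$. Hence $g$ normalizes the maximal order $\Mat[2](\ZZi)$, so $g\in\QQ(i)^\times\GL_2(\ZZi)$. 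Combined with $\det g=1$, this forces $g=\lambda g_0$ with $g_0\in\GL_2(\ZZi)$ and $\lambda^2\det g_0=1$, so $\lambda\in\{\pm1,\pm i\}$ or possibly $\lambda$ a unit times $(1+i)^{-1}$-type. The remaining finitely many cases are ruled out by testing directly on $\Lambda$. If $\lambda\in\ZZi^\times$, then $g\in\SL_2(\ZZi)$ and we are done.

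The main obstacle is Step 3: pinning down the order generated by $\Lambda$ and controlling the normalizer scalars. Normalizers of maximal orders contain elements like $\smat{0}{1}{1+i}{0}$ after scaling, so one must genuinely use the twisted action, and not merely the algebra, to exclude these. I would handle this locally. Away from $2$, $\Lambda\otimes\ZZ_p$ is the standard self-dual lattice, and the stabilizer is $\SL_2(\ZZi\otimes\ZZ_p)$ by the normalizer argument, since a unit determinant kills the Atkin--Lehner-type elements. At $p=2$, check directly modulo small powers of $1+i$ that no element of the normalizer outside $\SL_2(\ZZ_2[i])$ preserves $\Lambda$.
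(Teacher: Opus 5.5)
Your reduction is correct: $g$ stabilizes $\VZ$ under $x\mapsto gx\GalInv{g}^{-1}$ if and only if $h^{-1}gh$ stabilizes $\Lambda=h^{-1}\VZ\,\GalInv{h}$. Step~2 is also fine. Up to the scalar $\tfrac{1-i}{2}$, the lattice $\Lambda\smat{0}{1}{-1}{0}$ is the set of integral Hermitian matrices lying in $(1+i)\Mat[2](\ZZi)$, which $\SL_2(\ZZi)$ visibly preserves; this replaces the paper's argument with $v_0$.

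The gap is in Step~3. The $\ZZi$-span of $\Lambda\GalInv{\Lambda}=h^{-1}\VZ\GalInv{\VZ}h$ is $h^{-1}\order h$, where $\order$ is the even Clifford order from the paper. Explicitly,
\[ h^{-1}\order h=\Big\{\smat{a}{b}{c}{d}\in\Mat[2](\ZZi)\ :\ b,c\in(1+i)\ZZi,\ a+d\in2\ZZi\Big\}, \]
which has index $16$ in $\Mat[2](\ZZi)$. So the span is not $\Mat[2](\ZZi)$. This order also lies in other maximal orders, for example $\operatorname{diag}(1,1+i)\Mat[2](\ZZi)\operatorname{diag}(1,1+i)^{-1}$; in fact it lies in all three neighbours of $\Mat[2](\ZZ_2[i])$ in the Bruhat--Tits tree. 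Hence its invariance under conjugation by $g$ does not give $g\Mat[2](\ZZi)g^{-1}=\Mat[2](\ZZi)$.

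Your $2$-adic step presupposes exactly that conclusion. Once $g=\lambda g_0$ with $g_0\in\GL_2(\ZZ_2[i])$ and $\det g=1$, $\lambda^2$ is a unit, so $g\in\SL_2(\ZZ_2[i])$ and there is nothing left to check. Also, $\smat{0}{1}{1+i}{0}$ normalizes the Eichler order of level $(1+i)$, not $\Mat[2]$. What is missing is the real content of the proposition: that the norm-one normalizer of this non-maximal order is $\SL_2(\ZZi)$. The paper obtains this from the explicit coefficient conditions \eqref{eq:coefficientsonenormalizer}.

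You could close the gap within your framework in one of two ways.

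(a) Show that the maximal orders containing $h^{-1}\order h\otimes\ZZ_2$ are exactly the closed ball of radius one about $\Mat[2](\ZZ_2[i])$. The order sits inside $\ZZ_2[i]+(1+i)\Mat[2](\ZZ_2[i])$, so it lies in every maximal order at distance at most one. It contains $\smat{0}{0}{1+i}{0}$ and is invariant under $\GL_2(\ZZ_2[i])$-conjugation, which excludes all vertices at distance two. The set of such maximal orders is convex, so it is exactly that ball, and a normalizing $g$ must fix its centre.

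(b) More simply, $g$ also preserves the sublattice $\{x\in\VZ:\Q(x)\in2\ZZ\}$. Its image under $x\mapsto h^{-1}x\GalInv{h}\smat{0}{1}{-1}{0}$ is $(1-i)$ times the full lattice $\{\smat{A}{B}{\bar B}{D}:A,D\in\ZZ,\ B\in\ZZi\}$ of integral Hermitian matrices. The products $X\,\mathrm{adj}(Y)$ of such matrices transform by conjugation under $X\mapsto gXg^{\mathsf H}$ and generate the ring $\Mat[2](\ZZi)$. Your maximal-order argument then goes through verbatim.
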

    
    \begin{proof}
        By \eqref{eq:twistedGaloisaction} the action of $\SpinV(\QQ)$ on $V$ corresponds to the action of $\SL_2(\QQi)$ on $V$ through $g.v = gv(\GalInv{g}^{-1})$.
        Let $\order \subset \Mat[2](\QQi)$ be the subalgebra corresponding to $\Cle[(\V(\ZZ),\Q)]$ via \eqref{eq:galois-twist-clealg}; see
        \eqref{eq:Cl0(V(Z))} for the definition of $\Cle[(\V(\ZZ),\Q)]$.
        Equivalently, $\order$ is the $\ZZ$-algebra generated by $v\GalInv{w}$ for $v,w\in\VZ$ which can be computed to be
        \begin{align}\label{eq:basis-order}
            \order &=
            \Big\{ \begin{pmatrix}
            x_1 & x_2 \\ x_3 & x_4
            \end{pmatrix}: x_1,x_2,x_3,x_4 \in \ZZ[i],\ x_1+x_4,x_2+x_3 \in 2\ZZ[i]\Big\}.
        \end{align}
        If $g \in \SL_2(\QQi)$ preserves $\VZ$ (as does the image of an element in $\SpinV(\ZZ)$ under \eqref{eq:spin-iso-sl}), then it preserves $\order$ under conjugation.
        Using also that $\det(g)=1$, $g$ belongs to the group $\operatorname{N}^1(\order)$ of  norm-one elements of the normalizer of the order~$\order$.
        
        We begin by computing $\operatorname{N}^1(\order)$.
        So let
        \begin{align*}
            g = \begin{pmatrix}a&b\\c&d\end{pmatrix}\in \SL_2(\QQi).
        \end{align*}
        By explicit computation using the definition of the normalizer, we obtain that $g \in \operatorname{N}^1(\order)$ if and only if
        \begin{align}\label{eq:coefficientsonenormalizer}
            2ad,2ab,2cd,2bc, a^2-b^2, \tfrac{1}{2}(a^2-b^2+c^2-d^2),ac-bd, 2bd,2b^2,2d^2 \in \ZZ[i].
        \end{align}
        We record a few elementary properties of the conditions in \eqref{eq:coefficientsonenormalizer}. They imply $a^2,b^2,c^2,d^2 \in \frac{1}{2}\ZZ[i]$ and hence $a,b,c,d \in \frac{1}{1+i}\ZZ[i]$. Moreover, $(a+b)^2 = a^2 + 2ab + b^2 = (a^2-b^2) + 2ab +2b^2 \in \ZZ[i]$ so that $a+b \in \ZZ[i]$; similarly, $c+d \in \ZZ[i]$.
        Furthermore, we have $a+b-c-d \in (1+i)\ZZ[i]$, since
        \begin{align*}
            (a+b-c-d)(a-b-c+d) &= (a-c)^2-(b-d)^2\\
            &= a^2-b^2+c^2-d^2 - 2(ac-bd) \in 2\ZZ[i]
        \end{align*}
        and $(a+b-c-d) + (a-b-c+d) = 2a-2c \in (1+i)\ZZ[i]$. Lastly, since  $\operatorname{N}^1(\order)$ is a group, $g^{-1} = \smat{d}{-b}{-c}{a}$ satisfies the analogous relations and in particular $b-d \in \ZZ[i]$.
        Overall, it follows that
        \begin{align*}
        \operatorname{N}^1(\order) = \gamma^{-1} \SL_2(\ZZ[i])\gamma \quad \text{where} \quad
            \gamma^{-1} \defeq \mat{1+i}{1}{0}{1} .
        \end{align*}

        A priori, one might suspect the unit normalizer group $\operatorname{N}^1(\order)$ to be slightly larger than $\SpinV(\ZZ)$; this turns out not to be the case, that is we claim that $\operatorname{N}^1(\order) = \SpinV(\ZZ)$. 
        To verify this, notice that it is enough to find a single element $v_0 \in \V(\ZZ)$ of invertible determinant with $gv_0(\GalInv{g}^{-1}) \in \V(\ZZ)$ for all $g \in \operatorname{N}^1(\order)$. Indeed, in view of \eqref{eq:galois-twist-clalg} the lattice $\order v_0$ in $\Mat[2](\QQi)$ is then invariant under $v \mapsto gv\GalInv{g}^{-1}$ and hence so is its intersection with $\V$, which is $\V(\ZZ)$.
        Alternatively, in the Clifford algebra viewpoint the above argument says that whenever $g$ preserves $\Cle[(\VZ,\Q)]$ and there is a vector $v_0 \in \V(\ZZ)$ of unit quadratic value with $\Adj_g(v_0) \in \V(\ZZ)$ then $\Cle[(\VZ,\Q)] v_0 = \Clo[(\VZ,\Q)]$ is also invariant and hence so is $\V(\ZZ)$.
        Take
        \begin{align*}
        v_0 = \begin{pmatrix}i&\\&i\end{pmatrix} \in \V(\ZZ).
        \end{align*}
        Note that $\V \cap \order = \V(\ZZ)$ by \eqref{eq:integral-points} and \eqref{eq:basis-order}.
        Using that $\V$ is invariant, we have $gv_0\GalInv{g}^{-1} \in \V(\ZZ)$ if and only if $gv_0\GalInv{g}^{-1} \in \order$. The latter is equivalent to $g\GalInv{g}^{-1} \in \order$ as $v_0$ is central.
        Now let
        \[ g = \gamma^{-1}\begin{pmatrix}a&b\\c&d\end{pmatrix}\gamma \in \operatorname{N}^1(\order) \]
        where $a,b,c,d\in\ZZi$.
        Then a simple calculation gives
        \begin{align*}
        g\GalInv{g}^{-1} = \gamma^{-1}\begin{pmatrix}
        ia\GalInv{d} - b\GalInv{c}&b\GalInv{a} - ia\GalInv{b}\\
        ic\GalInv{d} - d\GalInv{c}&d\GalInv{a} - ic\GalInv{b}\end{pmatrix}\GalInv{\gamma}.
        \end{align*}
        If we write $a\GalInv{b} = x_1+iy_1$, $c\GalInv{d} = x_2+iy_2$, and $ia\GalInv{d} - b\GalInv{c} = x_3 + iy_3$ for integers $x_1,x_2,x_3, y_1, y_2, y_3$, then
        \begin{align*}
        g\GalInv{g}^{-1} = \begin{pmatrix}
        i(x_1+y_1) -ix_3+y_3 & -i(x_1+y_1)\\
        i(x_1+y_1) + 2i (x_2+y_2) & -i(x_1+y_1) -ix_3+y_3
        \end{pmatrix}
        \end{align*}
        which is clearly an element of $\order$. This proves the proposition.
    \end{proof}
    
	By abuse of notation, we will henceforward view points of the spin group $\SpinV$ as points of $\Res{\QQi}(\SL_2)$ using the isomorphism \eqref{eq:spin-iso-sl} above.

    \begin{example}\label{exp:stab L0}
    Let $\L_0$ be the oriented $\QQ$-subspace of $\V$ spanned by $e_1,e_2$. Under~\eqref{eq:basis over Mat}, we view $\L_0$ equivalently as
    \begin{align}\label{eq:stab-Lo}
    \L_0 = \Big\{\mat{*}{0}{0}{*}\Big\} \subset \V = \{x \in \Mat[2](\QQi): \GalInv{x} = -\bar{x}\}.
    \end{align}
    By~\eqref{eq:galois-twist-clealg} and \eqref{eq:twistedGaloisaction}, the stabilizer group $\HH_{\L_0}<\SpinV$ is then identified with 
    \begin{align*}
    \Big\{g \in \mathrm{Res}_{\QQi/\QQ}(\SL_2): g\mat{*}{}{}{*}\GalInv{g}^{-1} \subset \mat{*}{}{}{*},\ g \text{ preserves orientation}\Big\}
    \end{align*}
    and is therefore equal to the full diagonal subgroup.
    In particular, $\HH_{\Lo}(\RR) = A_\CC$.
    From \eqref{eq:stab-Lo}, it is also clear that $\HH_{\L_0}^{pt}(\RR) = A$ and $\HH_{\L_0^\perp}^{pt}(\RR) = A_{\CC}^1$.
    The Klein vector $\Klein(L_0)$ spans the line through $\mathrm{diag}(1,-1)$ in traceless $(2\times2)$-matrices over $\QQi$; its stabilizer in $\mathrm{Res}_{\QQi/\QQ}(\SL_2)$ under conjugation is also the diagonal subgroup as is, of course, implied by Proposition~\ref{prop:Klein-map}. 
    \end{example}
    
    Recall that the Klein map defined in \Cref{prop:Klein-map} takes values in the three-dimensional $\QQi$-vector space of traceless elements of $\Cle$ or, equivalently, in traceless matrices in $\Mat[2](\QQi)$. We denote this space by
	\begin{align}\label{eq:W-matrices}
	    \W \defeq \{\alpha\in\Cle\st \stdinv(\alpha) = -\alpha\} \simeq \Mat[2]^0(\QQi).
	\end{align}
	The integer points in $\W$ are defined via the integral structure on $\V$, that is
	\[ \WZ \defeq \W \cap \Cle[(\VZ,\Q)] \simeq \left\{\begin{pmatrix}x_1&x_2\\x_3&-x_1\end{pmatrix}\st ~ x_1,x_2,x_3\in\ZZi,\ x_2+x_3\in2\ZZi\right\} \]
	which is a three-dimensional $\ZZ[i]$-lattice. When we restrict the norm form (i.e.~the determinant) $\ClNrm$ to $\W$ we obtain a quadratic form $\QF\colon\W\to\QQi$. The conjugation action yields an exact sequence
	\begin{align}\label{eq:spin-to-SOQF}
	    1\to\{\pm1\} \to\SpinV \to \Res{\QQ(i)}(\SO_{\QF})\to 1.
	\end{align}
    Moreover, it is clear that $\SpinV(\ZZ)$ maps into $\SO_{\QF}(\ZZi)$ since $\SpinV(\ZZ)$ preserves $\VZ$ and thus also $\WZ$.
	
	\begin{remark}
		The fact that $\WZ$ is a \highlight{free} three-dimensional $\ZZ[i]$-lattice is one of the reasons why we restrict to the quadratic form $\Q(x_1,x_2,x_3,x_4) = -x_1^2 - x_2^2 - x_3^2 + x_4^2$. In general, the integer points of the even Clifford algebra are not a free module over the integers points of its center. For example, for the diagonal quadratic form $ 2x_1^2 + x_2^2 + x_3^2 + 5x_4^2$ the integer points of the associated even Clifford algebra are \highlight{not} a free module over the integers points of its center, which is $\ZZ[\sqrt{10}]$. 
	\end{remark}

    A pleasant property of the quadratic form $\Q(x_1,x_2,x_3,x_4) = -x_1^2 - x_2^2 - x_3^2 + x_4^2$ is that its discriminant $-1$.
    By the arguments in \cite[Sec.~3.2]{aka_equidistribution_2021}, we have for any oriented plane $\L$
    \begin{align*}
        \disc(\L) = -\disc(\oL).
    \end{align*}
    We recall that $\Klein(\oL) = z\Klein(\L)$ holds for some traceless $z\in\Z$, see \eqref{eq:orth-klein}. The definition of the (orientation of the) orthogonal complement $\oL$ then yields the following:

    \begin{lemma}\label{lem:KL-to-KoL}
        We have
        \begin{align}\label{eq:Klein-vector-of-orthogonal}
            \Klein(\oL) = -\sgn(\L)\ori\Klein(\L),
        \end{align}
        where $\ori\defeq e_1e_2e_3e_4\in\Z$ and $\sgn(\L)$ denotes the sign of $\disc(\L)$.
    \end{lemma}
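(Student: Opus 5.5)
The plan is to pin down the scalar in \eqref{eq:orth-klein} by computing its square and its sign separately. By \Cref{prop:Klein-map} (with \eqref{eq:KL-to-KoL}) we know $\Klein(\oL) = z\Klein(\L)$ for some non-zero traceless $z\in\Z$. Since $\Z$ has $\QQ$-basis $\{1,\ori\}$, this means $z = c\,\ori$ for some $c\in\QQ^\times$. It therefore suffices to show $c^2=1$ and $\sgn(c) = -\sgn(\L)$.

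\emph{Step 1: $c^2=1$ via norms.} Apply $\ClNrm$ to both sides; $\ClNrm$ is multiplicative on the elements involved. By \Cref{lem:length-of-Klein-vector} and $\disc(\oL)=-\disc(\L)$, the left side has norm $4\disc(\oL) = -4\disc(\L)$. The right side has norm $c^2\,\ClNrm(\ori)\,\ClNrm(\Klein(\L)) = c^2\,\ClNrm(\ori)\cdot 4\disc(\L)$. Next I compute $\ClNrm(\ori)$. First, $\stdinv(e_1e_2e_3e_4) = e_4e_3e_2e_1$. Reversing four mutually anticommuting factors costs six transpositions, so this equals $\ori$. Hence $\ClNrm(\ori) = \ori^2 = \disc(\Q) = -1$, which gives $c^2=1$.

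\emph{Step 2: the sign of $c$ via orientation.} Pick an oriented $\ZZ$-basis $(v_1,v_2)$ of $\L(\ZZ)$ and an oriented $\ZZ$-basis $(v_3,v_4)$ of $\oL(\ZZ)$; by the convention on orientations, $(v_1,\dots,v_4)$ is an oriented basis of $\V$. Replace $v_2$ by $v_2' = v_2 - \tfrac{\bQ(v_1,v_2)}{2\Q(v_1)}v_1$, choosing $v_1$ anisotropic, and treat $v_4$ similarly. This changes neither the wedges $v_1\wedge v_2$, $v_3\wedge v_4$ nor the orientation. Now $v_1,v_2',v_3,v_4'$ are pairwise orthogonal, so $\Klein(\L)=2v_1v_2'$ and $\Klein(\oL)=2v_3v_4'$. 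Hence $\Klein(\L)\Klein(\oL) = 4\,v_1v_2'v_3v_4'$.

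The key claim is $v_1v_2'v_3v_4' = \det(T)\,\ori$, where $T$ is the matrix expressing $(v_1,v_2',v_3,v_4')$ in the basis $(e_1,\dots,e_4)$. To see this, expand the product multilinearly in the $e_i$. The top-degree component of $x_1x_2x_3x_4$ is an alternating multilinear function of the $x_j$, so it equals $\det(T)\,\ori$. For pairwise orthogonal, hence anticommuting, $x_j$, the lower-degree components vanish: the product equals its full antisymmetrization, and the antisymmetrization of a product of vectors lies in top degree.

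Orientation gives $\det(T)>0$, so $\Klein(\L)\Klein(\oL) = \lambda\ori$ with $\lambda>0$.

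\emph{Step 3: solve for $c$.} Using $\Klein(\L)^2 = -4\disc(\L)$ from \eqref{eq:norm-Klein-vector} and the centrality of $\ori$, we get
\[ \Klein(\oL) = \Klein(\L)^{-1}\lambda\ori = -\tfrac{\lambda}{4\disc(\L)}\,\ori\,\Klein(\L). \]
Thus $c = -\lambda/(4\disc(\L))$ has sign $-\sgn(\L)$. Combined with $c^2=1$ this gives $c=-\sgn(\L)$, which is \eqref{eq:Klein-vector-of-orthogonal}.

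The main obstacle is the sign bookkeeping in Step 2, namely the identity $v_1v_2'v_3v_4' = \det(T)\ori$ for orthogonal vectors. I would verify it cleanly via the alternating-form argument above rather than by brute-force expansion.
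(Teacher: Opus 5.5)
Your proof is correct and follows essentially the same route as the paper's. The norm computation fixes $|c|=1$; the paper phrases this as $r^2=16\abs{\disc(\L)\disc(\oL)}$ for $[v_1,v_2][v_3,v_4]=r\ori$. The orientation convention then fixes the sign. Your orthogonalization and alternating-form argument in Step~2 supplies the justification for $[v_1,v_2][v_3,v_4]$ being a positive multiple of $\ori$, which the paper only asserts.
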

    \begin{proof}
        Let $(v_1,v_2)$ and $(v_3,v_4)$ be oriented bases of $\L$ and $\oL$, respectively. Then, by definition of the orientation of $\V$ and $\oL$ we know that $(v_1,v_2,v_3,v_4)$ has the same orientation as $(e_1,e_2,e_3,e_4)$. This implies that the traceless element 
        \[  [v_1,v_2][v_3,v_4] = \Klein(\L)\Klein(\oL) = \Klein(\L)^2z = -4\disc(\L)z \]
        is a positive multiple of $\ori$, say $r\ori$ for $r\in\QQ_{>0}$. 
        Note that $r^2 = 16\abs{\disc(\L)\disc(\oL)}$ and so $r = 4\abs{\disc(L)}$.
        Thus, by \eqref{eq:KL-to-KoL} we get that
        \[ \Klein(\oL) = -\frac{[v_1,v_2][v_3,v_4]}{4\disc(\L)}\Klein(\L) = -\frac{r}{4\disc(\L)}\ori\Klein(\L) . \]
        Since $r$ is positive, we get $\frac{r}{4\disc(\L)} = \sgn(\L)$ proving the lemma.
    \end{proof}
    
    \begin{remark}
        A traceless element $z\in\Z(\Cle)^\times$ defines an orientation on the quadratic space $\V$ in a natural way. These traceless elements act as $-\Id$ on the vector space $\V$ and this can be considered as their unique defining property.
    \end{remark}
    
\subsection{Integrality properties of the Klein vectors}
	We will now analyze primitivity properties of the Klein vector $\Klein(\L)$ associated to an oriented rational plane $\L\subseteq\V$. 
    Choosing a basis $v_1,v_2$ of $\L$ consisting of integral vectors, it is immediate that $\Klein(\L)\in\Cl[(\VZ,\Q)]$. 
    However, even if $v_1,v_2$ are primitive vectors in $\VZ$ the Klein vector $\Klein(\L)$ might not be primitive as an element of the $\ZZi$-module $\WZ$.
	
	We fix the $\ZZ$-basis $\{e_1,e_2,e_3,e_4\}$ of $\VZ$ as well as the $\ZZi$-basis 
    \begin{align}\label{eq:fi basis}
    f_1 = e_1e_2,f_2=e_2e_3,f_3=e_1e_3
    \end{align}
    of the rank-$3$ free $\ZZi$-module $\WZ$. By convention, we write vectors in $\V$ and $\W$ as row vectors in $\QQ^4$ and $\QQi^3$, respectively. In particular, matrices act from the right.
	
	\begin{definition}\label{def:primitivity}
		We call an element $v\in\VZ$ \highlight{primitive} if $\QQ v\cap\VZ= \ZZ v$.
		Similarly, we call an element $w\in\WZ$ \highlight{primitive} if $\QQi w\cap\WZ = \ZZi w$.
	\end{definition}
	
	Since $\ZZ[i]$ is a principal ideal domain, an element $w\in\WZ$ is primitive if and only if it is an element of a basis of $\WZ$. 
    For every $w\in\WZ$ there is a primitive element $\tilde{w}$ in the line $\QQi w$ which is unique up to multiplication by a unit $\ZZi^\times$.
	
	\begin{definition}[Primitive Klein vector]
		Let $\L\subseteq\V$ be a non-degenerate plane. We denote by $\prKlein(\L)$ a primitive vector in $\QQi\Klein(\L)$.
        
        We choose the primitive elements of rational, non-degenerate planes $\L$ in a compatible way with respect to \eqref{eq:Klein-vector-of-orthogonal}, 
        that is, if $\oL$ denotes the orthogonal complement of $\L$ and \reflectbox{$\L$} is the same subspace as $\L$ but with reversed orientation, then we require that
        \[ \prKlein(\oL) = -\sgn(\L)\ori\prKlein(\L),\quad \prKlein(\reflectbox{$\L$}) = - \prKlein(\L),\quad \prKlein(\orth{\reflectbox{$\L$}}) =\sgn(\L)\ori\prKlein(\L). \]
	\end{definition}

    Assume $(v_1,v_2)$ is an oriented $\ZZ$-basis of $\LZ$. Then
    \[ \Klein(\L) = [v_1,v_2] = v_1v_2 - v_2v_1 = 2v_1v_2 - \bQ(v_1,v_2), \]
    and, since $\bQ$ takes values in $2\ZZ$ when restricted to $\VZ$, we notice that $\Klein(\L)\in 2\WZ$. 

    \begin{lemma}\label{lem:prim-KL}
        Suppose that $D=\disc(L) \geq 1$ is square-free. Then $\Klein(\L)/2 \in r \WZ$ implies $r \mid (1+i)$.
        Moreover, if $D$ is odd, we have that $\Klein(\L)/2$ is primitive.
    \end{lemma}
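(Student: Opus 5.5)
The plan is to compare norms and then to rule out divisibility by odd Gaussian primes through a local argument on the lattice $\LZ$. Put $w \defeq \Klein(\L)/2 = v_1v_2 - \tfrac12\bQ(v_1,v_2) \in \WZ$, where $(v_1,v_2)$ is an oriented $\ZZ$-basis of $\LZ$. Lemma~\ref{lem:length-of-Klein-vector} gives $\ClNrm(w) = \disc(\L) = D$. Suppose $w = r w'$ with $r\in\ZZi$ and $w'\in\WZ$. Since $\ClNrm$ restricted to $\W$ is the $\QQi$-quadratic form $\QF$ (the determinant, with $\ZZi$-integral values on $\WZ$), we get $D = r^2\,\QF(w')$, hence $r^2 \mid D$ in $\ZZi$.

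Now let $\pi$ be a Gaussian prime dividing $r$. If $\pi$ is not associate to $1+i$, then either $\pi = p$ with $p \equiv 3 \pmod 4$, in which case $p^2 \mid D$; or $\pi\bar\pi = p \equiv 1 \pmod 4$, in which case taking norms gives $p^2 \mid D^2$. The second case is not enough to contradict square-freeness, so it is the main obstacle and needs a separate argument. For that case I would argue directly: $w \in \pi\WZ$ means that $w \equiv 0$ in $\WZ/\pi\WZ \simeq \FF_p^3$. Reduce modulo $\pi$ by tensoring with $\ZZi/\pi \simeq \FF_p$. Then $\Cle[(\VZ,\Q)]\otimes_{\ZZi}\FF_p$ is one of the two factors of $\Cle[(\VZ\otimes\FF_p,\Q)] \simeq \Mat[2](\FF_p)^2$, since over $\ZZ_p$ the center $\ZZi\otimes\ZZ_p$ splits. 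By the bijectivity of the linear Klein map $\bigwedge^2\V \to \W\otimes_\QQ$ and Proposition~\ref{prop:structure-of-Cle}, $\bigwedge^2(\VZ\otimes\FF_p)$ decomposes as the sum of the two $3$-dimensional factors $\W\otimes_{\pi}\FF_p$ and $\W\otimes_{\bar\pi}\FF_p$. The vanishing of the $\pi$-component of $\bar v_1\wedge\bar v_2$ (which is nonzero since $\LZ$ is primitive) means that this decomposable wedge lies in the $\bar\pi$-factor. That factor is the space of (anti-)self-dual $2$-forms, so the plane $\bar\L$ is totally isotropic for $\Q$ mod $p$. Then the Gram matrix of $\LZ$ vanishes mod $p$, giving $p^2 \mid D$, a contradiction. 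The same isotropic-plane argument also handles $p \equiv 3 \pmod 4$ uniformly.

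This leaves only $r \mid (1+i)^k$. Since $(1+i)^2 = 2i$, we get $r^2 \mid D$ with $r = (1+i)^k$, so $2^k \mid D$, which forces $k \le 1$ because $D$ is square-free. This gives $r \mid 1+i$. If $D$ is odd, then $k=0$, so $r$ is a unit and $w$ is primitive. Checking that $\ClNrm|_{\WZ}$ is $\ZZi$-integral amounts to the determinant on the explicit description of $\WZ$ and is routine.
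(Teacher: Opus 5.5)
Your proof is correct, but for split Gaussian primes you take a much longer route than the paper, and the ``main obstacle'' you identify is not really one. The paper likewise reduces to $r=\pi^k$ with $r^2\mid D$. For $\pi\bar\pi=p\equiv 1\pmod 4$ it simply uses that $D$ is rational: conjugating $r^2\mid D$ gives $\bar r^{2}\mid D$. Since $r$ and $\bar r$ are coprime, $(r\bar r)^2=p^{2k}$ divides $D$ in $\ZZi$, hence in $\ZZ$, which contradicts square-freeness. Information is lost only when you pass to norms (which gives just $p^2\mid D^2$), not in $r^2\mid D$ itself.

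Your reduction modulo $\pi$ is still valid and proves something stronger: $\Klein(L)/2\in\pi\WZ$ forces $\LZ$ to be totally isotropic mod $p$, so $p$ divides the whole Gram matrix. It does rest on two facts you only assert. The first is that $\tfrac12\Klein$ maps $\bigwedge^2\VZ$ isomorphically onto $\WZ$; this is true, since $\WZ=\bigoplus_{i<j}\ZZ e_ie_j$. The second is that a nonzero decomposable vector lying in one factor spans a totally isotropic plane, and this deserves a line of proof. For instance, if a central idempotent $z$ kills $u=[\bar v_1,\bar v_2]$, it also kills $u\bar v_1=\bQ(v_1,v_2)\bar v_1-2Q(v_1)\bar v_2$. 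But $z=\tfrac12(1+c\,e_1e_2e_3e_4)$ for a scalar $c$, so for a vector $y$ the product $zy$ has degree-one part $y/2$, and $z$ kills no nonzero vector. Independence of $\bar v_1,\bar v_2$ and $p$ odd then give $Q(v_1)\equiv\bQ(v_1,v_2)\equiv0\pmod p$, and the same computation with $u\bar v_2$ gives $Q(v_2)\equiv 0$. For the lemma as stated, none of this machinery is needed.
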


    Thus, if $D$ is odd and square-free, we define $\prKlein(\L) = \Klein(\L)/2$.
    Note that $D$ odd and square-free is equivalent to $D$ being square-free over the PID $\ZZi$.
    
    \begin{proof}
        Since $\ZZi$ is a PID, we may assume that $r$ is a prime power.
        Taking the determinant, the assumption and Lemma~\ref{lem:length-of-Klein-vector} imply that $r^2 \mid D$.
        If $r$ is a power of a rational prime, we conclude; assume otherwise.
        If $r\nmid 2$, $r$ and its conjugate are coprime ($2$ is the only ramified prime in $\QQi$) and we obtain that the norm of $r$ squared divides $D$; this is a contradiction.
        If $r \mid 2$ we have either $r\mid (1+i)$ or $2 \mid r$. In the later case, we again obtain a contradiction.
    \end{proof}
    

\subsection{The orthogonal complement of the Klein vector}\label{sec:orth lattice Klein}

    Observe that the quadratic form $\QF$ is equal to the sum of three squares in the $\QQi$-basis $f_1=e_1e_2,f_2=e_2e_3,f_3=e_1e_3$. 
    Note that $f_1,f_2,f_3$ is an $\ZZi$-basis of $W(\ZZi)$.
    Let $\bQF$ be the bilinear form associated to $\QF$ similarly defined as \eqref{eq:bilinear-form}. For every non-zero $w\in\W$ we let $\orth{w} \defeq \{w'\in\W\st \bQF(w,w') = 0\}$.

    Our geometric setup dictates a notion of orientation on $\W$ which we now introduce (the reader should beware that general $\QQi$-spaces do not come with a notion of orientation).
    Considering that we fixed the ordered basis $(f_1,f_2,f_3)$ of $\W$, we say that an ordered basis $(w_0,w_1,w_2)$ of $\W$ is \highlight{oriented} if the determinant of the matrix
    \[ \begin{pmatrix}\text{---}&w_0&\text{---}\\\text{---}&w_1&\text{---}\\\text{---}&w_2&\text{---}\end{pmatrix} \in\GL_3(\QQi) \]
    is in $\QQ_{>0}$. 
    In particular, if $(w_0,w_1,w_2)$ is an oriented basis of $\WZ$ then the above matrix is in $\SL_3(\ZZi)$.
    
    Any primitive vector $w\in\WZ$ can be complemented to an oriented basis of $\WZ$. Further, by primitivity there exists $u_w\in\WZ$ such that $\tfrac{\bQF(w,u_w)}{2}=1$ and we note that $u_w$ is unique up to adding elements of $\orth{w}\cap\WZ$. We call an ordered basis $(w_1,w_2)$ of $\orth{w}$ \highlight{oriented}, if $(u_w,w_1,w_2)$ is an oriented basis of $\W$. Clearly, this definition does not depend on the choice of $u_w$. Further, observe that two oriented bases of $\orth{w}$ differ by an element in $\GL_2(\QQi)$ with determinant in~$\QQ_{>0}$.

    For any primitive $w\in\WZ$ we define the \highlight{oriented orthogonal lattice} $\Lambda_w $ in $\orth{w}$ by
    \begin{align}\label{eq:ori-orth-lattice}
        \Lambda_w \defeq \orth{w}\cap\WZ,
    \end{align} 
    together with an oriented $\ZZi$-basis $(w_1,w_2)$ of $\Lambda_w$. 
    Note that such a basis exists: for any $\ZZi$-basis $(w_1,w_2)$ of $\Lambda_w$ there exists $\alpha \in \ZZi^\times$ so that $(w_1,\alpha w_2)$ is oriented.
    Similarly as before, we define the discriminant of the oriented lattice $\Lambda_w$ in $\W$ as
    \[ \disc(\Lambda_w) \defeq \det\begin{pmatrix}\QF(w_1)&\bQF(w_1,w_2)/2\\\bQF(w_2,w_1)/2&\QF(w_2)\end{pmatrix}, \]
    and we note that this definition is independent of the chosen oriented basis.
    
     \begin{lemma}\label{lem:disc-of-orth}
        Let $w\in\WZ$ be primitive. Then $w = \QF(w)u_w + bw_1 + cw_2$ for some $b,c\in\ZZi$ where $(w_1,w_2)$ is an oriented basis of $\Lambda_w$. Moreover, the oriented lattice $\Lambda_w$ satisfies
    	\[  \disc(\Lambda_w) = \QF(w). \]
    \end{lemma}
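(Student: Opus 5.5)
We work throughout in the $\ZZi$-basis $(f_1,f_2,f_3)$ of $\WZ$. In this basis $\QF(x)=x_1^2+x_2^2+x_3^2$ and $\bQF(x,y)/2=x_1y_1+x_2y_2+x_3y_3$, so $\bQF/2$ takes values in $\ZZi$ on $\WZ$. The plan is to show, in order, that $(u_w,w_1,w_2)$ is an oriented $\ZZi$-basis of $\WZ$, that this gives the decomposition of $w$, and that $w_1\times w_2=w$, from which the discriminant formula follows.

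\emph{Step 1: a basis of $\WZ$.} Let $x\in\WZ$ and set $t=\bQF(x,w)/2\in\ZZi$. Then
\[
\bQF(x-tu_w,w)=\bQF(x,w)-t\,\bQF(u_w,w)=2t-2t=0 ,
\]
so $x-tu_w\in\orth{w}\cap\WZ=\Lambda_w$. Hence $\WZ=\ZZi u_w\oplus\Lambda_w$, and the sum is direct because $\bQF(u_w,w)\neq0$. It follows that $(u_w,w_1,w_2)$ is a $\ZZi$-basis of $\WZ$. Consequently the matrix with rows $u_w,w_1,w_2$ lies in $\GL_3(\ZZi)$, so its determinant is a unit. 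By the definition of an oriented basis of $\Lambda_w$ this determinant is also in $\QQ_{>0}$, hence it equals $1$.

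\emph{Step 2: the decomposition of $w$.} Apply Step 1 to $x=w$. Here $t=\bQF(w,w)/2=\QF(w)$, so $w-\QF(w)u_w\in\Lambda_w$. Writing this element in the basis $(w_1,w_2)$ gives $w=\QF(w)u_w+bw_1+cw_2$ with $b,c\in\ZZi$, which is the first claim.

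\emph{Step 3: the discriminant.} The tempting route is to compare Gram determinants of the bases $(u_w,w_1,w_2)$ and $(w,w_1,w_2)$. Doing so gives $\QF(w)\disc(\Lambda_w)=\QF(w)^2$, which settles only the case $\QF(w)\neq0$; isotropic primitive $w$ do exist over $\ZZi$. To avoid this case split, I would instead use the cross product $w_1\times w_2$ computed in the coordinates $(f_1,f_2,f_3)$, together with the Lagrange identity
\[
\QF(w_1)\QF(w_2)-\big(\bQF(w_1,w_2)/2\big)^2=\QF(w_1\times w_2),
\]
which is a polynomial identity and therefore holds over $\QQi$. Its left-hand side is exactly $\disc(\Lambda_w)$. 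It remains to identify $w_1\times w_2$:
\begin{itemize}
\item $w_1\times w_2$ is $\bQF$-orthogonal to $w_1$ and $w_2$, hence to $\orth{w}$, which they span.
\item Since $\bQF$ is non-degenerate on the three-dimensional space $\W$, the orthogonal complement of the two-dimensional space $\orth{w}$ is the line $\QQi w$. This holds even when $w\in\orth{w}$, i.e.\ in the isotropic case.
\item So $w_1\times w_2=sw$ for some $s\in\QQi$. Pairing with $u_w$ gives $\bQF(u_w,w_1\times w_2)/2=\det(u_w,w_1,w_2)=1$ by Step 1.
\item On the other hand, $\bQF(u_w,sw)/2=s$, so $s=1$ and $w_1\times w_2=w$.
\end{itemize}
Substituting into the Lagrange identity gives $\disc(\Lambda_w)=\QF(w)$.

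The only genuinely delicate point is Step 3. One must handle isotropic $w$ correctly and keep track of the orientation convention so that $s=1$ rather than a unit of $\ZZi$. The cross-product argument takes care of both at once.
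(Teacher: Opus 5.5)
Your proof is correct. Steps 1--2 match the first half of the paper's proof, and they make explicit two facts the paper uses without comment. The paper writes $w=au_w+bw_1+cw_2$ with $b,c\in\ZZi$, which presupposes that $(u_w,w_1,w_2)$ is a $\ZZi$-basis of $\WZ$. It then writes $\det h=\QF(w)$, which presupposes $\det(u_w,w_1,w_2)=1$. Your Step 1 proves both, via $\WZ=\ZZi u_w\oplus\Lambda_w$ and $\ZZi^\times\cap\mathbb{Q}_{>0}=\{1\}$.

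For the discriminant, the paper takes exactly the route you set aside:
\begin{itemize}
\item it lets $h$ be the matrix with rows $w,w_1,w_2$;
\item it reads off $\det h=\QF(w)$ from the decomposition;
\item it computes $\det(h)^2=\det(hh^t)=\QF(w)\disc(\Lambda_w)$ from the block-diagonal Gram matrix;
\item it cancels $\QF(w)$.
\end{itemize}
This is shorter and needs only the decomposition. However, the cancellation assumes $\QF(w)\neq 0$. That is harmless in the only application, where $w$ is the primitive Klein vector and $\QF(w)=D\geq 1$. But the lemma is stated for every primitive $w$, and isotropic primitive vectors such as $f_1+if_2$ exist; for these the paper's computation gives only $0=0$.

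Your cross-product argument proves the stronger identity $w_1\times w_2=w$. It uses only $\det(u_w,w_1,w_2)=1$, non-degeneracy, and $\bQF(u_w,w)/2=1$, not the decomposition of $w$. Lagrange's identity then gives $\disc(\Lambda_w)=\QF(w)$ for isotropic and anisotropic $w$ alike. So it proves the lemma as literally stated, at the small cost of the triple-product and Lagrange identities.
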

    \begin{proof} 
        Since $(u_w,w_1,w_2)$ is a basis of $\W$ so we may write $w = au_w + bw_1 + cw_2$.
        Then 
        \[ \QF(w) = \frac{\bQF(w,w)}{2} = \frac{\bQF(w,au_w + bw_1 + cw_2)}{2} = a\frac{\bQF(w,u_w)}{2} = a \]
        and so we get $w = \QF(w)u_w + bw_1 + cw_2$ for some $b,c\in\ZZi$.
        
        Let $h \in \Mat[3](\ZZi)$ be the matrix with rows $w,w_1,w_2$.
        Then $\det(h)=\QF(w)$ by the previous assertion. On the other hand,
        \[ \det(h)^2 = \det(hh^t) = \det\begin{pmatrix}\QF(w)& 0&0\\0&\QF(w_1)&\bQF(w_1,w_2)/2\\0&\bQF(w_1,w_2)/2&\QF(w_2)\end{pmatrix} \]
        is equal to $\QF(w)\disc(\Lambda_w)$. It follows that $\disc(\Lambda_w) = \QF(w)$.
    \end{proof}
    
    For completeness, we also record the following lemma that does not rely on the above integrality discussions.
    
    \begin{lemma}\label{lem:compl of Klein}
    	The map 
        \begin{align*}
        \Cl[(\L,\Q|_L)]\otimes\Cl[(\oL,\Q|_{\oL})]\to\Cl,\ v_1\otimes v_2 \mapsto v_1v_2
        \end{align*}
        defines a grading-preserving isomorphism and restricts to an isomorphism $\L\otimes\oL\to\oKL$ for any plane $L \subset \V$.
    \end{lemma}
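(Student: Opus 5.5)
The plan is to work throughout with an orthogonal basis adapted to the splitting $\V=\L\oplus\oL$. Here I read $\oKL$ as the orthogonal complement of $\Klein(\L)$ inside $\W$ with respect to $\bQF$, viewed as a $\QQ$-vector space; I also take the tensor product $\Cl[(\L,\Q|_L)]\otimes\Cl[(\oL,\Q|_{\oL})]$ to be the $\mathbb{Z}/2$-graded one, since odd elements of $\L$ and $\oL$ anticommute in $\Cl$. Since $\L$ is non-degenerate, choose an orthogonal basis $e_1',e_2'$ of $\L$ and $e_3',e_4'$ of $\oL$. Then $e_1',\dots,e_4'$ is an orthogonal basis of $\V$.

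\emph{Step 1: the map is a graded isomorphism.} The map $\phi\colon\L\oplus\oL\to\Cl[(\L,\Q|_L)]\hat\otimes\Cl[(\oL,\Q|_{\oL})]$, $x+y\mapsto x\otimes1+1\otimes y$, satisfies $\phi(x+y)^2=\Q(x)+\Q(y)=\Q(x+y)$. This uses the graded sign rule $(x\otimes1)(1\otimes y)=-(1\otimes y)(x\otimes 1)$. By the universal property (Definition~\ref{def:clifford-alg}) we obtain an algebra map $\Cl\to\Cl[(\L,\Q|_L)]\hat\otimes\Cl[(\oL,\Q|_{\oL})]$. Conversely, because $\L$ and $\oL$ anticommute in $\Cl$ by \eqref{eq:char-property}, the multiplication map $v_1\otimes v_2\mapsto v_1v_2$ is an algebra homomorphism. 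It sends the product basis $\{e'_I\otimes e'_J\}$ to the basis \eqref{eq:basis Clifford algebra} of $\Cl$, so it is bijective; both sides have dimension $16$. It preserves parity because degrees add.

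\emph{Step 2: the image of $\L\otimes\oL$ lies in $\oKL$.} Take $x\in\L$ and $y\in\oL$. Then $xy$ is even, and $\stdinv(xy)=yx=-xy$, so $xy\in\W$. Writing $\Klein(\L)=2e_1'e_2'$ (valid for an orthogonal basis, up to a scalar), for traceless $\alpha,\beta$ one has $\bQF(\alpha,\beta)=\alpha\stdinv(\beta)+\beta\stdinv(\alpha)=-(\alpha\beta+\beta\alpha)$. It therefore suffices to check that $e_1'e_2'$ anticommutes with $e_i'e_j'$ for $i\in\{1,2\}$ and $j\in\{3,4\}$. For example,
\[ (e_1'e_2')(e_1'e_3')=-\Q(e_1')e_2'e_3', \qquad (e_1'e_3')(e_1'e_2')=\Q(e_1')e_2'e_3', \]
and the sum vanishes. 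The remaining products are handled the same way.

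\emph{Step 3: dimension count.} Injectivity on $\L\otimes\oL$ follows from Step 1. It remains to show $\dim_\QQ\oKL=4$. The space $\W$ is a free rank-$3$ module over $\Z$, which is either a field or $\QQ\times\QQ$. Moreover $\bQF$ is non-degenerate and $\QF(\Klein(\L))\in\QQ^\times$, so $\oKL$ is free of rank $2$ over $\Z$, hence $4$-dimensional over $\QQ$. This matches $\dim_\QQ(\L\otimes\oL)=4$.

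I expect the main obstacle to be Step 3 in the split case $\Z\simeq\QQ\times\QQ$, where one must argue componentwise via the idempotents of Proposition~\ref{prop:structure-of-Cle}. A more direct route avoids this: exhibit the four elements $e_i'e_j'$ explicitly and show that they span $\oKL$. They span a subspace $X$ of $\W$, and $\W=\QQ e_1'e_2'\oplus\QQ e_3'e_4'\oplus X$. The element $e_3'e_4'$ is a central multiple of $e_1'e_2'$, so it is not orthogonal to $\Klein(\L)$. Hence $\oKL=X$.
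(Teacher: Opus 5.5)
Your proof is correct and takes the same route as the paper. You show $xy\in\W$ via $\stdinv$, identify $\oKL$ with the elements of $\W$ anticommuting with $\Klein(\L)$, check that each $xy$ does so, and conclude with a dimension count over $\QQ$. Your orthogonal-basis computation is the paper's remark that $\L$ and $\oL$ are the $(-1)$- and $(+1)$-eigenspaces of $\Adj_{\Klein(\L)}$, and you spell out the dimension count more fully than the paper does.

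One small point concerns your optional direct route. Ruling out $e_3'e_4'$ alone is not enough; you need $\oKL\cap(\QQ e_1'e_2'\oplus\QQ e_3'e_4')=0$. This holds because $\bQF(\Klein(\L),\cdot)$ sends $e_1'e_2'$ to a nonzero rational and $e_3'e_4'$ to a nonzero rational multiple of $e_1'e_2'e_3'e_4'$, and these two elements of $\Z$ are $\QQ$-linearly independent.
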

    \begin{proof}
        One may readily verify that $\phi$ is indeed a grading-preserving isomorphism.
        For any $v_1 \in \L$ and $v_2 \in \oL$, we have $\stdinv(v_1v_2) = v_2v_1 = -v_1v_2$ due to orthogonality of $\L$ and $\oL$, so the image $\phi(\L\otimes\oL)$ consists of traceless elements in $\Cle$, that is, $\phi(\L\otimes\oL)\subseteq\W$. Observe that $\bQ[\QF](\Klein(\L),w) = 0$ is equivalent to $\Klein(\L)w = - w\Klein(\L)$, therefore
        \begin{align*}
            \oKL = \{\alpha\in W\st \Klein(\L)\alpha = - \alpha\Klein(\L) \}.
        \end{align*}
        It follows that $\phi(\L\otimes\oL)$ is a subset of $\oKL$ since $\L$ and $\oL$ are subspaces of the $(-1)$- and $(+1)$-eigenspaces of $\Adj_{\Klein(L)}$, respectively. By a simple dimension count (over $\QQ$) we conclude that $\phi(\L\otimes\oL) = \oKL$ as claimed.
    \end{proof}


\section{Definition of shapes, CM points and periodic geodesics}\label{sec:shapes}

    We use the notation fixed in \S\ref{sec:Clifford-algebra-and-Klein-map}. 
    Throughout, $D$ is an odd square-free positive integer (see also~\Cref{rem:sqfree} below).
    Recall from \eqref{eq:Def-of-JD} that $\P_D$ denotes the collection of rational planes of discriminant $D$.
    It satisfies $\QF(\prKlein(\L))= D$ by \Cref{lem:disc-of-orth}. 
    We set
    \begin{align}\label{eq:Klein-orth-lattice}
        \Lambda_{\L} \defeq \Lambda_{\prKlein(\L)}
    \end{align}
    and call it the \highlight{Klein orthogonal lattice} associated with $\L$ (see \eqref{eq:ori-orth-lattice}).
    Let
    \[ \X = \Tan(\Y)  = \lquot{\SL_2(\ZZ)}{\SL_2(\RR)} \]
    be the unit tangent bundle of the modular curve $\Y = \SL_2(\ZZ)\backslash\mathbb{H}^2$ and let
    \[ \X_{\QQi} \defeq \lquot{\SL_2(\ZZi)}{\SL_2(\CC)} \]
    be the frame bundle of the Bianchi orbifold $\Y_{\QQi} =\SL_2(\ZZi)\backslash\HypSpace^3$ (cf.~\eqref{eq:Bianchiorbifold}).
    
    Let us summarize the following section. 
    To $\L\in\P_D$ we will associate an orbit
    \begin{align}\label{eq:ass-orbit}
        \GG(\ZZ)\TT_\L(\RR)y_{\L} \subseteq \X_{\QQi}\times\X\times\X\times\X_{\QQi}.
    \end{align}
    Here, $\GG$ is as in \eqref{eq:defG} and $\TT_\L< \GG$ will be a $\QQ$-torus of absolute rank two of the~form
    \begin{align*}
        \TT_\L = \{(h,\theta_{\L}(h),\theta_{\oL}(h),\theta_{\oKLZ}(h))\st h\in\HH_\L\},
    \end{align*}
    where $\HH_\L<\SpinV\simeq \mathrm{Res}_{\QQi/\QQ}(\SL_2)$ is the stabilizer subgroup of $\L$, and $\theta_{\L},\theta_{\oL}$, $\theta_{\oKLZ}$ are homomorphisms from $\HH_\L$ into the corresponding factor groups of $\GG$. 
    Projections from the right-hand side of \eqref{eq:ass-orbit} onto each of the irreducible quotients will yield respectively,
    \begin{enumerate}
        \item a periodic geodesic in the Bianchi orbifold, loosely speaking parametrizing the position of $\L$ (see \eqref{eq:L-geodesic}) in $\V \otimes \RR$;
        \item a CM point in the modular surface corresponding to the class of the definite quadratic form $\qL = \rest{Q}{\LZ}$ (see \eqref{eq:CM-point-qL});
        \item a periodic geodesic in the modular surface corresponding to the class of the indefinite quadratic form $\qoL = \rest{Q}{\oLZ}$ (see \eqref{eq:geodesic-qoL});
        \item a periodic geodesic in the Bianchi orbifold corresponding to a certain binary form over $\ZZi$, loosely speaking parametrizing the Klein vector (see \eqref{eq:geodesic-qoKL}).
    \end{enumerate}
    
    \begin{remark}
        In what follows, we will identify $\SpinV(\RR)$ with $\SL_2(\CC)$ and think of $\SL_2(\ZZi)$ as a lattice in the real Lie group $\SpinV(\RR)$.
    \end{remark}
\subsection{The geodesic corresponding to the \enquote{position} of \texorpdfstring{$L$}{L}}\label{sec:geodesic-KL}
    We fix the \enquote{reference plane} $\Lo = \langle e_1,e_2\rangle_{\QQ}\in\P_1$. 
    Let $D>0$ be an odd square-free integer.
    Given a plane $\L\in\P_{D}$ choose a rotation $k_\L\in\SpinV(\RR)$ with $k_L.(\Lo(\RR)) = \L(\RR)$; such a rotation exists by Witt's theorem, see e.g. \cite[p.21]{cassels2008rational}. 
    Define
    \begin{align}\label{eq:L-geodesic}
         [\L] \defeq \SL_2(\ZZi)\Xi_1\HH_\L(\RR)k_\L = \SL_2(\ZZi)\Xi_1 k_\L A_\CC \subseteq \X_{\QQi},
    \end{align}
    where we used $\HH_{\Lo}(\RR) = A_\CC$ (cf.~Example~\ref{exp:stab L0}) and where
    \begin{align}\label{eq:xi}
        \Xi_1 = \sqrt{1+i}\begin{pmatrix}1+i&1\\&1\end{pmatrix}^{-1}.
    \end{align}
    Note that $[\L]$ does not depend on the choice of $k_\L$.
    As $D$ and $-D$ are not squares in $\QQ$, the torus $\HH_\L$ is $\QQ$-anisotropic and hence $[\L]$ is compact. 
    Indeed, by the Borel--Harish-Chandra theorem we know that
    \[ \SL_2(\ZZi) \cap \Xi_1\HH_\L(\RR) \Xi_1^{-1} =\Xi_1(\SpinV(\ZZ)\cap\HH_\L(\RR))\Xi_1^{-1} \]
    is a lattice in the Lie group $\Xi_1\HH_\L(\RR)\Xi_1^{-1}$.

\subsection{The shape of \texorpdfstring{$\LZ$}{L(Z)}}\label{sec:CM-point-qL}
    Recall that we represent vectors in $\V = \QQ^4$ as row vectors. For $\alpha\in\SpinV$ we denote by $[\Adj_\alpha]\in \SO_Q$ the matrix such that for all $v \in \V$
    \[ \Adj_{\alpha}(v) = v[\Adj_{\alpha}]^{-1} = v[\Adj_{\alpha^{-1}}]. \]
    For $\L \in \P_D$ we choose a matrix $\BT{\L}\in\SL_4(\ZZ)$ such that the first and second row $v_1,v_2$ form an oriented basis of $\LZ$.
    Let $\qL(x,y) = \Q(xv_1+yv_2)$.
    With $k_\L$ as chosen above, we have that
	\[ \BT{\L}[\Adj_{k_\L}] \in \left\{\begin{pmatrix}*&*&0&0\\ *&*&0&0\\ *&*&*&*\\ *&*&*&*\end{pmatrix}\right\} \subseteq \SL_4(\RR). \]
	We define the shape $[\LZ]$ of the oriented rational plane $\L$ as the $\SO_{2}(\RR)$-orbit of the (renormalized) upper-left $(2\times2)$-matrix above in $\X$, that is
    \begin{align}\label{eq:CM-point-qL}
        [\LZ] \defeq \SL_2(\ZZ)\SO_{\qL}(\RR)\M_\L = \SL_2(\ZZ)\M_\L\SO_2(\RR) \subseteq \X,
    \end{align}
	where $M_\L \defeq D^{-1/4}\pi_{\upperleft}\left(\BT{\L}[\Adj_{k_\L}]\right)$ and $\pi_{\upperleft}$ is the described projection.
    Changing $\BT{\L}$ alters $M_\L$ by an element of $\SL_2(\ZZ)$ on the left.
    Moreover, $[\LZ]$ is independent of the choices of $k_\L$ and $\BT{\L}$.
    
    For all $h\in\HH_\L$ we have
	\[ \BT{\L}[\Adj_h]\BT{\L}^{-1} \in \left\{\begin{pmatrix}*&*&0&0\\ *&*&0&0\\ *&*&*&*\\ *&*&*&*\end{pmatrix}\right\}. \]
    By definition, all elements $h\in\SpinV$ preserve the quadratic form $\Q$.
    We define the morphism $\theta_{\L}\colon\HH_\L \to\HH_{\qL}\defeq\SO_{\qL}$ by
	\begin{align}\label{eq:def-thetaL}
	   \theta_{\L}(h) \defeq \pi_{\upperleft}(\BT{\L}[\Adj_h]\BT{\L}^{-1}). 
	\end{align}
	Note that the determinant of $\theta_{\L}(h)$ is indeed $1$ as $\HH_\L$ is connected. The group $\HH_{\qL}$ and the homomorphism $\theta_{\L}$ depend on the choice of oriented basis of $\L$ implicit in $\BT{\L}$ (but not on the other rows of $\BT{\L}$).
    The morphism $\theta_{\L}$ should be thought of as describing the action of $\HH_{\L}$ on the plane $\L$.
    
\subsection{The periodic geodesic corresponding to \texorpdfstring{$\oLZ$}{orthogonal L(Z)}}\label{sec:geodesic-qoL}
    Let $\L \in \P_D$.
    We construct the shape $[\oLZ]$ \textemdash~a periodic geodesic attached to the orthogonal complement of $\L$ \textemdash~in an analogous way. Note that $k_\L.(\oLo(\RR)) = \oL(\RR)$. 
    We choose a matrix $\BTo{\oL}\in\SL_4(\ZZ)$ such that the third and fourth row $v_3,v_4$ form an oriented basis of $\oLZ$. 
    Let $\qoL(x,y) = \Q(x v_3 + y v_4)$.
    Fix $\delta_3 \in \SL_2(\RR)$ with $\delta_3^{-1}\SO_{1,1}(\RR)\delta_3 = A$.
    We define the geodesic attached to $\oL$ as the $A$-orbit
    \begin{align}\label{eq:geodesic-qoL}
	    [\oLZ] \defeq \SL_2(\ZZ)\SO_{\qoL}\!(\RR)M_{\oL}\delta_3 = \SL_2(\ZZ)M_{\oL}\delta_3A \subseteq \X,
	\end{align} 
	where $M_{\oL} \defeq D^{-1/4}\pi_{\lowerright}\left(\BTo{\oL}[\Adj_{k_\L}]\right)$ and $\pi_{\lowerright}$ denotes the projection to the lower-right $(2\times2)$-matrix. 
    The geodesic is periodic since $\SO_{\qoL}\!(\RR)$ is a $\QQ$-anisotropic torus and is independent of the choices of $k_\L$ and $\BTo{\oL}$. 
    As before, we can define the $\QQ$-morphism 
    \begin{align*}
        \theta_{\oL}\colon\HH_\L \to \HH_{\qoL}\defeq\SO_{\qoL}
    \end{align*}
    via
	\[ \theta_{\oL}(h) \defeq \pi_{\lowerright}(\BTo{\oL}[\Adj_h]\BTo{\oL}^{-1}). \]
    
    Changing $\delta_3$ by a Weyl element if necessary, we may assume that $\delta_3^{-1}\theta_{L_0^\perp}(a_s)\delta_3 = a_{2s}$. Indeed, $a_s$ acts with eigenvalues $e^s,e^{-s}$ on $L_0$ as is clear from Example~\ref{exp:stab L0}.

\subsection{A geodesic parametrizing \texorpdfstring{$\oKLZ$}{orthogonal Klein L(Z)}}\label{sec:geodesic-qoKL} 
    Recall that $\W$ is the subspace of the even Clifford algebra with trace zero over the center $\QQ(i)$ (where $\W$ has dimension $3$ over $\QQ(i)$).
    We represent elements in $\W$ as row vectors with respect to the basis $\{f_1,f_2,f_3\}$ given in \eqref{eq:fi basis}.
    Recall from \eqref{eq:spin-to-SOQF} that the action of $\SpinV(\QQ)$ on $\W$ induces, with this choice of basis, a morphism
    \begin{align*}
        \alpha\in\SpinV \mapsto [[\Adj_\alpha]] \in \Res{\QQi}(\SO_{\QF}) \subset \Res{\QQi}(\GL_3).
    \end{align*}
    In particular, for $\alpha\in\SpinV(\QQ)$ we view $[[\Adj_\alpha]]$ as $(3\times3)$-matrix over $\QQi$ acting on the vector space $\W$ by multiplying row vectors from the right.

    Let $\L \in \mathcal{P}_D$ and let $\BToK{\L}\in\SL_3(\ZZi)$ such that the second and third row form a basis of the oriented orthogonal lattice $\oKLZ$ defined in \eqref{eq:Klein-orth-lattice}.
    Since $\Adj_{k_\L}^{-1}(\L(\RR)) = \Lo(\RR)$ we get by equivariance of the Klein map  that $\Adj_{k_\L}^{-1}(\oKL) = \oKLo = \orth{(2f_1)}$. Hence,
	\begin{align}\label{eq:str-lower-2x2} 
		\BToK{\L}[[\Adj_{k_\L}]]  \in \left\{\begin{pmatrix}*&*&*\\0&*&*\\0&*&*\end{pmatrix}\right\} \subseteq \SL_3(\CC).
	\end{align}
    Let $\qoKL$ be the binary quadratic form over $\ZZi$ obtained by restricting the quadratic form $\QF$ to $\oKLZ$ and using the above basis.
    We define $\HH_{\qoKL}$ as the restriction of scalars from $\QQi$ to $\QQ$ of the $\QQi$-group
    \[ \big\{g\in\SL_2\st g.\qoKL = \qoKL\}. \]    
    In particular, since $\Klein(\Lo) = 2f_1$ we get $\HH_{\qoKLo}=\Res{\QQi}(\SO_2)$ using the oriented basis $\{f_2,f_3\}$ of $\Lambda_{L_0}$. 
    Fix $\delta_4 \in \SL_2(\CC)$ with $\delta_4^{-1}\SO_2(\CC)\delta_4 = A_\CC$.
    Analogously as above, we define the shape $[\oKLZ]$ as the $\SO_2(\CC)=\HH_{\qoKLo}\!(\RR)$-orbit of the (renormalized) lower-right $(2\times2)$-matrix in $\X_{\QQi}$, that is
    \begin{align}\label{eq:geodesic-qoKL}
        [\oKLZ] \defeq  \SL_2(\ZZi)\HH_{\qoKL}(\RR)N_\L\delta_4 = \SL_2(\ZZi)N_\L\delta_4 A_\CC\subseteq \X_{\QQi},
    \end{align}
	where $N_L \defeq D^{-1/4}\pi_{\slowerright}\left(\BToK{\L}[[\Adj_{k_\L}]]\right)$ and $\pi_{\slowerright}$ denotes the described projection. By Lemmas~\ref{lem:disc-of-orth} and \ref{lem:length-of-Klein-vector}, we have $\disc(\Lambda_\L) = \QF(\prKlein(\L)) = D >0$. 
    Also,
    \begin{align*}
        \disc(\Lambda_\L) &= \det\left(\pi_{\slowerright}\left(\BToK{\L}\BToK{\L}^t\right)\right)\\
        &= \det\left(\pi_{\slowerright}\left(\BToK{\L}[[\Adj_{k_\L}]][[\Adj_{k_\L}]]^t\BToK{\L}^t\right)\right) \\
        &= \det\left(\pi_{\slowerright}\left(\BToK{\L}[[\Adj_{k_\L}]]\right)\pi_{\slowerright}\left(\BToK{\L}[[\Adj_{k_\L}]]\right)^t\right) = \det\left(\pi_{\slowerright}\left(\BToK{\L}[[\Adj_{k_\L}]]\right)\right)^2
    \end{align*}
    where the first equality follows from $\QF$ being sum of three squares, the second equality follows from $[[\Adj_{k_\L}]]\in\Res{\QQi}(\SO_{\QF})$ and the second to last equality by \eqref{eq:str-lower-2x2}. Thus, we have $\det(\pi_{\slowerright}(\BToK{\L}[[\Adj_{k_\L}]])) = D^{1/2}$, so that the rescaling factor $D^{-1/4}$ in \eqref{eq:geodesic-qoKL} ensures that we project to an element in $\SL_2(\CC)$.    
    Note that $[\oKLZ]$ is compact as $\HH_{\qoKL}$ is $\QQ$-anisotropic because $D$ and $-D$ are not squares. Again by the Borel--Harish-Chandra theorem, we have that
    \[ \SL_2(\ZZi) \cap \HH_{\qoKL}(\RR) = \Res{\QQi}(\SL_2)(\ZZ)\cap\HH_{\qoKL}(\RR)\]
    is a lattice in $\HH_{\qoKL}(\RR)$.
    
    We define a $\QQ$-morphism $\theta_{\oKLZ}\colon\HH_{\L}\to\HH_{\qoKL}$  by
	\[ \theta_{\oKLZ}(h) \defeq \pi_{\slowerright}(\BToK{\L}[[\Adj_h]]\BToK{\L}^{-1}). \]
    Changing $\delta_4$ by a Weyl element if necessary, we may assume that $\delta_4^{-1}\theta_{\oLoZ}(a_s)\delta_4 = a_{2s}$, as before.

\subsection{From planes to arithmetic equidistribution}\label{sec:planes-to-equi}

    So far, we have defined for each $\L\in \P_D$ a CM point or periodic geodesics in each of the factors of
    \[ \XX \defeq \lquot{\GG(\ZZ)}{\GG(\RR)} = \X_{\QQi}\times\X\times\X\times\X_{\QQi}.  \]
    We now lift these collections in the factors to a single \enquote{coupled} collection on $\XX$ dictated by the various orthogonal complement constructions above.
    
    As discussed in the beginning of \Cref{sec:shapes} we define the $\QQ$-morphism $\Theta_{\L}\colon\HH_\L\to\GG$ by
	\[ \Theta_{\L}(h) \defeq (h,\theta_{\L}(h), \theta_{\oL}(h), \theta_{\oKLZ}(h)). \]
	The image is a two-dimensional $\QQ$-anisotropic torus $\TT_\L \defeq \Theta_{\L}(\HH_\L)$ defined over $\QQ$. 
    Further, in order to align all planes with the reference plane $\Lo$ we define
    \begin{align}\label{eq:shift-element}
        y_\L \defeq (k_\L, M_{\L},M_{\oL}\delta_3, N_{\L}\delta_4)\in\GG(\RR),
    \end{align} 
    which satisfies $y_\L^{-1}\TT_\L(\RR)y_\L \subseteq A_\CC\times\SO_2(\RR)\times A\times A_\CC$. Finally, let
    \begin{align}\label{eq:Real-Orbit}
        \GG(\ZZ)\Zshift\TT_{\L}(\RR)y_\L \subseteq \XX,
    \end{align}
    where $\Zshift \defeq (\Xi_1,e,e,e)$, see \eqref{eq:xi}. The above orbit is naturally equipped with a $y_\L^{-1}\TT_\L(\RR)y_\L$-invariant probability measure.
    This measure relates to the arithmetic coupling of the geodesics and CM points discussed in \S\ref{sec:geodesic-KL}--\ref{sec:geodesic-qoKL}, as we now describe.
    
     We want to define a measure $\nu^{joint}_{\L}$ on $\Tan(\Y_{\QQi})\times\Y\times\Tan(Y)\times\Tan(\Y_{\QQi})$ for a plane $\L\in\P_{D}$. To this end, we first note the following.

    \begin{lemma}\label{lem:lengths}
    For $L \in \P_D$ denote by $\ell_L$ the length of the projection of $[L]$ to $\Tan(\Y_{\QQi})$ (a periodic geodesic).
    Then the following hold:
    \begin{enumerate}
        \item The length $\ell_{\oLZ}$ of the geodesic $[\oLZ]$ is either $\ell_L$ or $2\ell_L$.
        \item The length $\ell_{\oKLZ}$ of the projection of $[\oKLZ]$ to $\Tan(\Y_{\QQi})$ is $\ell_L$.
    \end{enumerate}
    Moreover, these lengths depend only on the square-free integer $D$.
    \end{lemma}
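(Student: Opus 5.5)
The plan is to express every length as a covolume (regulator) of a single group of units, namely the unit group of the torus $\HH_\L$, computed in each factor through the homomorphisms $\theta_{\L},\theta_{\oL},\theta_{\oKLZ}$.

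\emph{Step 1: the first factor.} Write $\Gamma_\L = \SpinV(\ZZ)\cap\HH_\L(\RR)$. By Proposition~\ref{prop:iso-cliff}, $\Xi_1\SpinV(\ZZ)\Xi_1^{-1}=\SL_2(\ZZi)$. Hence the projection of $[\L]$ to $\Tan(\Y_{\QQi})$ is the $A$-orbit of $\SL_2(\ZZi)\Xi_1 k_\L$ modulo $A_\CC^1$. Its length is therefore the length of a generator of the image of $k_\L^{-1}\Gamma_\L k_\L\subseteq A_\CC$ under the projection $A_\CC\to A_\CC/A_\CC^1\simeq A\simeq \RR$.

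Corollary~\ref{cor:stabilizer-subgroup} together with Remark~\ref{rem:special-ClGr} describes $\HH_\L(\QQ)$ concretely. Its elements are $\beta\beta'$ with $\beta\in\QQ(\sqrt{-D})^\times$-type elements $a+b\Klein(\L)$ and $\beta'$ the analogous elements for $\oL$, which by Lemma~\ref{lem:KL-to-KoL} are $a'+b'\ori\Klein(\L)$ with $\ori^2=-1$. So $\HH_\L$ is (isogenous to) the norm-one torus attached to the biquadratic algebra $\QQ(i,\sqrt{D})$ over $\QQ(i)$. Up to finite index, $\Gamma_\L$ is the group of relative norm-one units of $\ZZ[i][\sqrt{D}]$, i.e.\ essentially generated by the fundamental unit $\varepsilon_D$ of $\QQ(\sqrt{D})$ (or a power of it) times roots of unity. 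The element $a_s$ acts on $\L_0$ with eigenvalue $e^{s}$, so $\ell_\L$ equals $\log$ of the generating unit's eigenvalue on $\L$. This gives $\ell_\L=c\log\varepsilon_D$ with $c$ determined by which power of $\varepsilon_D$ lies in $\Gamma_\L$.

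\emph{Step 2: comparison with the other factors.} The map $\theta_{\oL}$ restricted to $\Gamma_\L$ lands in $\SO_{\qoL}(\ZZ)$, with $\delta_3^{-1}\theta_{\oL_0}(a_s)\delta_3=a_{2s}$. Hence the image of a generator $\gamma$ of $\Gamma_\L$ translates the geodesic $[\oLZ]$ by $2\ell_\L$ in the parametrisation by $a_s^2$, which is exactly the normalisation used in $\nu^{joint}_\L$. The length is then determined by the index of $\theta_{\oL}(\Gamma_\L)$ in $\SO_{\qoL}(\ZZ)$.

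To bound this index, note that $\SO_{\qoL}(\ZZ)$ is generated, up to $\pm1$, by the automorph coming from the fundamental solution of Pell's equation for discriminant $-4D$. Any element of $\SO_{\qoL}(\ZZ)$ extends to $\L\oplus\oL$ as the identity on $\L$. It therefore lies in the image of $\HH^{pt}_\L(\ZZ)$, except for a possible index-$2$ obstruction caused by $\L(\ZZ)\oplus\oL(\ZZ)$ having index $D$ (or $2D$) in $\ZZ^4$ and by the spin cover. Tracking this through \eqref{eq:spin-exact-seq} gives index $1$ or $2$, i.e.\ length $\ell_\L$ or $2\ell_\L$ in the normalisation used.

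For $\theta_{\oKLZ}$ the argument is the same, but cleaner: conjugation by $\SpinV$ on $\W$ is exactly the adjoint action of $\SL_2$ over $\QQ(i)$. The stabiliser of $\prKlein(\L)$ in $\SpinV(\ZZ)$ then maps isomorphically, modulo $\pm1$, onto $\HH_{\qoKL}(\ZZ)$. This uses that $\Lambda_\L$ together with $\prKlein(\L)$ spans a sublattice of $\WZ$ of index $D$, which is odd and square-free, and that Lemma~\ref{lem:prim-KL} applies. Hence $\ell_{\oKLZ}=\ell_\L$.

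\emph{Step 3: dependence only on $D$.} Each length is $\log$ of a unit determined by the orders involved. Different $\L\in\P_D$ give $\HH_\L$ that are conjugate over $\QQ$, and the relevant integral orders are locally conjugate. Since $D$ is odd and square-free, these orders are maximal away from $2$, and at $2$ all $\L$ behave identically. Hence the unit groups, and so the lengths, depend only on $D$.

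The main obstacle is the index computation in Step 2 for $\oL$. This requires carefully comparing $\HH_\L(\RR)\cap\SpinV(\ZZ)$ with $\SO_{\qoL}(\ZZ)$ through the double cover and the non-unimodularity of $\L(\ZZ)\oplus\oL(\ZZ)$. I would first attempt this via the explicit description $\beta\beta'$ with $\beta,\beta'$ in $\ZZ$-orders of $\QQ(\sqrt{-D})$ and $\QQ(\sqrt{D})$, together with the norm condition $\ClNrm(\beta)\ClNrm(\beta')=1$.
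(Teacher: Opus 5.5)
Your Step~2 for the fourth factor fails, and it contradicts your own bookkeeping for the third factor.

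\textbf{The fourth factor.} The map $\theta_{\oKLZ}$ is the adjoint action on $\orth{\Klein(L)}$, so $\theta_{\oKLoZ}(\operatorname{diag}(\lambda,\lambda^{-1}))$ has eigenvalues $\lambda^{\pm 2}$. This is why $\delta_4$ is normalised by $\delta_4^{-1}\theta_{\oKLoZ}(a_s)\delta_4=a_{2s}$, and why the image of $\theta_{\oKLZ}$ consists of squares (Lemma~\ref{lem:image-of-third-morphism}). Hence a generator of $\Gamma_L$ translates $[\oKLZ]$ by $2\ell_L$. If $\theta_{\oKLZ}(\Gamma_L)$ were all of $\HH_{\qoKL}(\ZZ)$ modulo torsion, you would get $\ell_{\oKLZ}=2\ell_L$, not $\ell_L$. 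So (2) is equivalent to this image having index \emph{exactly} $2$ modulo torsion.

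The index-$D$ gluing of $\ZZ[i]\prKlein(L)\oplus\Lambda_L$ inside $\WZ$ is an obstruction to lifting, not a source of surjectivity. The lifting argument below, run with spinor norms in $\QQ(i)^\times/(\QQ(i)^\times)^2$ and Proposition~\ref{prop:iso-cliff} for integrality, only gives index at most $2$, i.e.\ $\ell_{\oKLZ}\in\{\ell_L,2\ell_L\}$. Excluding index $1$ is the real content of (2):
\begin{itemize}
\item Identify $\HH_L$ and $\HH_{\qoKL}$ with the norm-one torus of $K=\QQ(i,\sqrt D)$ over $\QQ(i)$, so that $\theta_{\oKLZ}$ becomes $x\mapsto x^2$.
\item Then $\Gamma_L=\iota(\mathcal O^1)$, where $\mathcal O=\iota^{-1}(\Xi_1^{-1}\mathrm{Mat}_2(\ZZ[i])\Xi_1)$ and $\iota$ is as in the proof of Lemma~\ref{lem:relationtoclassgroups}.
\item On the other side, $\HH_{\qoKL}(\ZZ)$ is the group of norm-one units of the order attached to $\qoKL$.
\item You must show these two unit groups have the same free part.
\end{itemize}
Both orders lie between $\ZZ[i,\sqrt D]$ and $\mathcal O_K$ and depend on $L$ at the prime $1+i$. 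Your Step~1 (``up to finite index'') cannot see this.

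\textbf{The third factor and Step~3.} Your set-up $2\ell_L=m\,\ell_{\oLZ}$, with $m$ the index of $\theta_{L^\perp}(\Gamma_L)$ in $\SO_{\qoL}(\ZZ)$ modulo $\pm1$, is right. But ``index $1$ or $2$'' is only asserted, and there are two separate obstructions:
\begin{itemize}
\item $\mathrm{id}_L\oplus g$ preserves $\ZZ^4$ only if $g$ acts trivially on $L^\perp(\ZZ)^\#/L^\perp(\ZZ)\cong\ZZ/D$;
\item it lifts to $\SpinV(\QQ)$ only if its spinor norm is trivial.
\end{itemize}
The missing point is that both are homomorphisms into elementary abelian $2$-groups. (The isometry group of the discriminant form is $\{\pm1\}^r$, with $r$ the number of primes dividing $D$.) So both vanish on $g^2$. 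The lift of $\mathrm{id}_L\oplus g^2$ then lies in $\Gamma_L$ by the very definition of $\SpinV(\ZZ)$, whence $m\le 2$.

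Completed this way, your route differs from the paper's. The paper identifies $\ell_L$ and $\ell_{\oLZ}$ with $\log$ of minimal norm-one units $\eta\in U_{i,D}$ and $\varepsilon\in U_D$ of the \emph{maximal} orders. It then compares them via $\varepsilon\in U_{i,D}$ and $|\Norm_{K/\QQ(\sqrt D)}(\eta)|=|\eta|^2$. Working with integral points has the advantage of not presupposing which order occurs.

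That presupposition is exactly where your Step~3 breaks. It is not true that all $L\in\P_D$ ``behave identically at $2$'', and the last assertion of the statement fails. Take $D=5$:
\begin{itemize}
\item $L=\langle e_1,2e_2+e_3\rangle$ has $L^\perp(\ZZ)=\langle e_4,e_2-2e_3\rangle$ and $\qoL\sim x^2-5y^2$, so $\ell_{\oLZ}=\log(9+4\sqrt5)$.
\item $L'=\langle(1,1,-1,0),(0,2,0,-1)\rangle\in\P_5$ has ${L'}^{\perp}(\ZZ)=\langle(1,0,1,0),(0,1,1,-2)\rangle$ with form $-2x^2-2xy+2y^2$. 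This is twice a form of discriminant $5$, so its geodesic has length $\log\frac{3+\sqrt5}{2}$, one third of the former.
\end{itemize}
By (1), $\ell_L\neq\ell_{L'}$ as well. The same example shows that the paper's identification of these lengths with units of $\mathcal O_K$ and $\mathcal O_{\QQ(\sqrt D)}$ does not hold in general.
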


    In view of the lemma, we set $\ell_{D} \defeq \ell_\L$ for some $L\in \P_D$.

    \begin{proof}  
    Let $U_D$ (resp.~$U_{i,D}$) denote the unit group of the ring of integers of $\QQ(\sqrt{D})$ (resp.~$\QQ(i,\sqrt{D})$) and let $\mathcal{F}=\{\pm1,\pm i\}$ be the group of roots of unity in $\QQ(i,\sqrt{D})$.
    By Dirichlet's unit theorem, the free part of the groups $U_D$ and $U_{i,D}$ has rank~$1$.
    As we assume that $D$ is square-free, both of the lengths $\ell_{\oKLZ}$ and $\ell_L$ are equal to  $\log \abs{\eta}$, where $\eta\in U_{i,D}$ has minimal absolute value and has relative norm $\Norm_{\QQ(i,\sqrt{D})/\QQi}(\eta) = 1$. Similarly, $\ell_{\oLZ}$ is $\log \varepsilon$ where $\varepsilon>1$ is a unit in $U_{D}$ with  $\Norm_{\QQ(\sqrt{D})/\QQ}(\varepsilon) = 1$ and of minimal size.
    So these lengths indeed depend only on $D$.
    For the first assertion, note that $\varepsilon\in U_{i,D}$ and $\Norm_{\QQ(i,\sqrt{D})/\QQ(i)}(\varepsilon)=\Norm_{\QQ(\sqrt{D})/\QQ}(\varepsilon)=1$ and therefore $\eta^k\in\mathcal{F}\varepsilon$ for some $k\in \NN$. If $k=1$ we are done. We have that $\varepsilon':=\Norm_{\QQ(i,\sqrt{D})/\QQ(\sqrt{D})}(\eta)\in U_D$ has $\Norm_{\QQ(\sqrt{D})/\QQ}(\varepsilon')=\Norm_{\QQ(i,\sqrt{D})/\QQ}(\eta) = 1$ and is of size $\abs{\eta}^2$. From the minimality of $\varepsilon=\abs{\eta}^k$ we get $k\leq 2$.
    \end{proof}

    Let $\TT^{pt}_\L = \Theta_\L(\HH_\L^{pt})$ where $\HH^{pt}_{\L}$ is the pointwise stabilizer group (see \eqref{eq:defHLpt}). 
    Define the compact group $\KR \defeq A_\CC^{1}\times\SO_2(\RR)\times\{1\}\times A_\CC^{1}$.
    \begin{lemma}\label{lem:torus-real-compact}
        The following hold:
        \begin{enumerate}[noitemsep]
            \item $\HH_\L(\RR) = \HH^{pt}_\L(\RR)\HH^{pt}_{\oL}(\RR)$ and in particular $\TT_\L(\RR) = \TT^{pt}_{\L}(\RR)\TT^{pt}_{\oL}(\RR)$.\label{eq:torus-real-compact-1}
            \item $y_{\L}^{-1}\TT^{pt}_{\oL}(\RR)y_\L \subseteq \KR$\label{eq:torus-real-compact-2}.
            \item $y_{\L}^{-1}\TT^{pt}_{\L}(\RR)y_\L = \{(a_s,1,a_{2s},a_{2s})\st s\in\RR\} \subseteq A\times\{1\}\times A\times A$. \label{eq:torus-real-compact-3}
            \item $y_{\L}^{-1}\TT_\L(\RR)y_\L \cap \KR = y_{\L}^{-1}\TT^{pt}_{\oL}(\RR)y_\L$.\label{eq:torus-real-compact-4}
        \end{enumerate}
    \end{lemma}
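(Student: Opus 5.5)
The plan is to reduce everything to the reference plane $\Lo$ by conjugating with $k_\L$, and then to compute explicitly using Example~\ref{exp:stab L0}.

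For (1), I would first note that $\HH_\L = k_\L\HH_{\Lo}k_\L^{-1}$ over $\RR$, and likewise for the pointwise stabilizers, since $k_\L$ maps $\Lo(\RR)$ to $\L(\RR)$ and $\oLo(\RR)$ to $\oL(\RR)$. By Example~\ref{exp:stab L0} we have $\HH_{\Lo}(\RR)=A_\CC$, $\HH^{pt}_{\Lo}(\RR)=A$ and $\HH^{pt}_{\oLo}(\RR)=A_\CC^1$. Since $A_\CC = AA_\CC^1$, conjugating back gives $\HH_\L(\RR)=\HH^{pt}_\L(\RR)\HH^{pt}_{\oL}(\RR)$. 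Applying the homomorphism $\Theta_\L$ yields the statement for $\TT_\L$.

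For (2) and (3), I would use the fact that conjugation by $y_\L$ intertwines $\Theta_\L$ with the corresponding data for $\Lo$. Concretely, the following hold.

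In the first factor, $k_\L^{-1}hk_\L$ lies in $A_\CC$.

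In the second factor, $M_\L^{-1}\theta_\L(h)M_\L$ equals, up to scaling (which cancels), the upper-left block of $[\Adj_{k_\L^{-1}hk_\L}]$. This uses the block structure of $\BT{\L}[\Adj_{k_\L}]$: for $h$ preserving $\L$, the upper-left block of the product is the product of the upper-left blocks. The same reasoning applies to the lower-right blocks and to the $3\times3$ matrices in \eqref{eq:str-lower-2x2}.

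So, writing $h'=k_\L^{-1}hk_\L\in A_\CC$, we get
\[ y_\L^{-1}\Theta_\L(h)y_\L = \bigl(h',\ \pi_{\upperleft}[\Adj_{h'}],\ \delta_3^{-1}\pi_{\lowerright}[\Adj_{h'}]\delta_3,\ \delta_4^{-1}\pi_{\slowerright}[[\Adj_{h'}]]\delta_4\bigr). \]
For $h'\in A_\CC^1$ (pointwise stabilizer of $\oLo$), the following hold:
\begin{itemize}
\item the second entry is a rotation of $\Lo$, hence in $\SO_2(\RR)$;
\item the third entry is trivial;
\item the fourth entry lies in $\delta_4^{-1}\SO_2(\CC)\delta_4=A_\CC$, and should be in $A^1_\CC$ by a direct computation of conjugation by $\mathrm{diag}(e^{it},e^{-it})$ on $f_2,f_3$.
\end{itemize}
This gives (2). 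For $h'=a_s$, the pointwise stabilizer of $\Lo$ acts trivially on $\Lo$ and gives the normalizations $a_{2s}$ chosen for $\delta_3,\delta_4$, which is (3).

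For (4), the inclusion ``$\supseteq$'' is (2). For ``$\subseteq$'', I would write an element as a product $t\,t'$ with $t\in\TT^{pt}_\L$ and $t'\in\TT^{pt}_{\oL}$, using (1). After conjugation, $t$ becomes $(a_s,1,a_{2s},a_{2s})$ and $t'$ lies in $\KR$. Membership of the product in $\KR$ then forces $a_{2s}=1$ in the third factor, so $s=0$.

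The main obstacle is the fourth-factor computation: the explicit action of $[[\Adj_{h'}]]$ on $\langle f_2,f_3\rangle$ for $h'=\mathrm{diag}(a,a^{-1})$. I would do this by computing the conjugation action on the matrices $f_2=e_2e_3$ and $f_3=e_1e_3$ via \eqref{eq:basis over Mat}. I expect the action to be a complex rotation by the angle coming from $a^2$, so that after conjugating by $\delta_4$ it becomes $\mathrm{diag}(a^2,a^{-2})$. This lies in $A_\CC^1$ when $|a|=1$ and equals $a_{2s}$ when $a=e^s$.
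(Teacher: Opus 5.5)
Your proposal is correct and is essentially the paper's own argument. The paper's proof is the single line ``a direct consequence of earlier observations'', and you unpack exactly those observations: conjugating by $k_\L$ to the reference plane $\Lo$, applying Example~\ref{exp:stab L0} and the block-triangular structure to express $y_\L^{-1}\Theta_\L(h)y_\L$ through $h'=k_\L^{-1}hk_\L\in A_\CC$, and using the chosen normalizations of $\delta_3,\delta_4$.

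For the fourth factor in (2) you can skip the explicit computation on $f_2,f_3$. The image of the compact group $A_\CC^1$ under a continuous homomorphism into $A_\CC\simeq\CC^\times$ must lie in its maximal compact subgroup $A_\CC^1$.
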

    \begin{proof}
        This is a direct consequence of earlier observations.
    \end{proof}
    For any $\L \in \P_D$ we define $\nu^{joint}_\L$ on $\tquot{\XX}{\KR} = \Tan(\Y_{\QQi})\times\Y\times\Tan(Y)\times\Tan(\Y_{\QQi})$ 
    through
    \begin{align}\label{eq:def-of-msr}
        \nu^{joint}_{\L}(\varphi) \defeq \frac{1}{\ell_{D}}\int_{0}^{\ell_{D}}\varphi\big(z_\L (a_s,1,a_{2s},a_{2s}) \KR\big)\dd{s},
    \end{align}
    for any continuous compactly supported function $\varphi$ on $\tquot{\XX}{\KR}$.
    Here, $z_\L \defeq\GG(\ZZ)\Zshift y_\L$ is the \enquote{starting point}.

    \begin{lemma}
        The measure $\nu^{joint}_\L$ depends only on the $\SpinV(\ZZ)$-equivalence class~of~$\L$.
    \end{lemma}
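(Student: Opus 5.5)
The plan is to show that the starting point $z_\L = \GG(\ZZ)\Zshift y_\L$ satisfies two properties. First, it is well defined modulo the choices made, in the sense that any other admissible choice changes it only by right multiplication with an element of $y_\L^{-1}\TT_\L(\RR)y_\L$ that leaves the measure unchanged. Second, replacing $\L$ by $\gamma.\L$ for $\gamma\in\SpinV(\ZZ)$ changes $z_\L$ only in this same way. The whole argument is bookkeeping of the choices $k_\L$, $\BT{\L}$, $\BTo{\oL}$, $\BToK{\L}$; the constants $\delta_3,\delta_4$ are fixed once and for all.

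\emph{Step 1 (independence of choices).} Changing $\BT{\L}$ to $\gamma_2\BT{\L}$ with $\gamma_2\in\SL_4(\ZZ)$ preserving the oriented lattice $\LZ$ multiplies $M_\L$ on the left by an element of $\SL_2(\ZZ)$. The same holds for $\BTo{\oL}$, and for $\BToK{\L}$ with $\SL_2(\ZZi)$. These changes are absorbed into $\GG(\ZZ)$ (only the relevant rows matter, as the paper notes for $\theta_\L$). For $k_\L$, any other choice is $k_\L h$ with $h\in\HH_{\Lo}(\RR)=A_\CC$. Then $y_\L$ becomes $y_\L\Theta_{\Lo}(h)'$, where $\Theta_{\Lo}(h)'$ denotes $h$ in the first factor and the action of $h$ on $\Lo$, $\oLo$, $\Lambda_{\Lo}$ (conjugated by $\delta_3,\delta_4$) in the others. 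This element lies in $y_\L^{-1}\TT_\L(\RR)y_\L = \{(a_s,1,a_{2s},a_{2s})\}\cdot(\text{part in }\KR)$ by Lemma~\ref{lem:torus-real-compact}. The compact part is killed by passing to $\XX/\KR$, since that part commutes with the $A$-part, being inside an abelian group. The $a_s$-part is a time shift along the periodic orbit, and the measure is an average over a full period $\ell_D$. Periodicity itself comes from the Borel--Harish-Chandra lattice and Lemma~\ref{lem:lengths}: the first coordinate has period $\ell_D$, and one checks that the combined orbit $z_\L(a_s,1,a_{2s},a_{2s})\KR$ is $\ell_D$-periodic. This is needed in any case for the definition to make sense. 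It holds because a generator of $\HH_\L(\ZZ)$, viewed through $\Theta_\L$, lies in $\GG(\ZZ)$.

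\emph{Step 2 (equivariance).} Let $\gamma\in\SpinV(\ZZ)$ and $\L'=\gamma.\L$. We may choose $k_{\L'}=\gamma k_\L$, and take $\BT{\L'}=\BT{\L}[\Adj_{\gamma}]$, which lies in $\SL_4(\ZZ)$ because $\gamma$ preserves $\VZ$ and orientations. Then $\BT{\L'}[\Adj_{k_{\L'}}]$ equals $\BT{\L}[\Adj_{\gamma}][\Adj_{\gamma k_\L}]$; modulo the convention for $[\Adj]$ (which fixes the correct inverse placement), this reduces to $\BT{\L}[\Adj_{k_\L}]$, giving $M_{\L'}=M_\L$ and likewise $M_{\oL'}=M_{\oL}$. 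By equivariance of the Klein map (Proposition~\ref{prop:Klein-map}) and integrality of $[[\Adj_\gamma]]$ on $\WZ$, we may take $\BToK{\L'}=\BToK{\L}[[\Adj_\gamma]]$, so $N_{\L'}=N_\L$; the primitive normalization $\Klein(\L)/2$ is respected. In the first coordinate, $\Xi_1\gamma k_\L=(\Xi_1\gamma\Xi_1^{-1})\Xi_1k_\L$. By Proposition~\ref{prop:iso-cliff}, $\Xi_1\gamma\Xi_1^{-1}\in\SL_2(\ZZi)$, since the scalar $\sqrt{1+i}$ cancels in the conjugation. Hence $z_{\L'}=z_\L$.

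The main obstacle is Step 1's compatibility claim. One must verify carefully that changing $k_\L$ by $h\in A_\CC$ moves all four coordinates by exactly $\Theta_{\Lo}(h)$ conjugated by $(e,e,\delta_3,\delta_4)$, with the normalizations $\delta_3^{-1}\theta_{\oLo}(a_s)\delta_3=a_{2s}$ and $\delta_4^{-1}\theta_{\oLoZ}(a_s)\delta_4=a_{2s}$. One must also check that the resulting flow shift preserves the normalized average over one period. I would verify this on the reference plane $\Lo$ using Example~\ref{exp:stab L0}, and then transport it by $k_\L$.
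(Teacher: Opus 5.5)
Your proposal is correct and follows the paper's proof: you first show independence of the auxiliary choices (a change of $k_\L$ acts by a right translate from $y_\L^{-1}\TT_\L(\RR)y_\L$), then choose $k_{\L'}=\gamma k_\L$ and transport the bases to obtain $z_{\L'}=z_\L$, additionally making explicit the $\ell_D$-periodicity of the joint orbit and the fact that $\Xi_1\gamma\Xi_1^{-1}\in\SL_2(\ZZi)$ by Proposition~\ref{prop:iso-cliff}, both of which the paper leaves implicit. The only slip is in the transport itself: since $\Adj_\gamma(v)=v[\Adj_\gamma]^{-1}$, the correct choices are $\BT{\L'}=\BT{\L}[\Adj_\gamma]^{-1}$ and $\BToK{\L'}=\BToK{\L}[[\Adj_\gamma]]^{-1}$, and this is exactly what gives $[\Adj_\gamma]^{-1}[\Adj_{\gamma k_\L}]=[\Adj_{k_\L}]$.
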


    \begin{proof}
        First, note that we have already established that the starting point $z_\L$ does not depend on $\BT{\L}$, $\BTo{\oL}$, and $\BToK{\L}$. 
        Moreover, changing $k_\L$ alters $z_\L$ by an element of $\TT_{\Lo}(\RR)$ on the right and hence $\nu^{joint}_\L$ depends only on $\L$.
        For $\gamma\in\SpinV(\ZZ)$ and $\L' = \Adj_{\gamma}(\L)$ notice that we may choose $k_{\L'} = \gamma k_\L$, $\BT{\L'} = \BT{\L}[\Adj_{\gamma}]^{-1}$, $\BTo{\orth{\L'}} = \BTo{\oL}[\Adj_{\gamma}]^{-1}$ as well as $\BToK{\L'} = \BToK{\L}[[\Adj_{\gamma}]]^{-1}$. Since, for example,
        \[ M_{\L'} = D^{-1/4}\pi_{\upperleft}(\BT{\L'}[\Adj_{k_{\L'}}]) = D^{-1/4}\pi_{\upperleft}(\BT{\L}[\Adj_\gamma]^{-1}[\Adj_{\gamma}][\Adj_{k_\L}]) = M_{\L}, \]
        it follows that $z_{\L'} = z_{\L}$ and hence the lemma follows.
    \end{proof}

   Lastly, we set
    \[ \nu^{joint}_{\D} \defeq \frac{1}{\abs{\J_{D}}}\sum_{\SpinV(\ZZ)\L\in\J_{D}}\nu^{joint}_\L. \]
    

\section{The Dynamical Theorem}\label{sec:dynamical-theorem}

	In this section we formulate the dynamical version \Cref{thm:dynamical-version} of \Cref{thm:equi}. The dynamical version relies on an equidistribution result for certain adelic torus orbits. Before we state the theorem we introduce the adelic setup.

\subsection{Adelic and \texorpdfstring{$p$}{p}-adic setup}

    Let $p$ be an odd prime. We define the adelic, respectively $p$-adic, extensions of our base space $\XX = \tlquot{\GG(\ZZ)}{\GG(\RR)}$ as
	\[ \XX_\Adeles \defeq  \lquot{\GG(\QQ)}{\GG(\Adeles)} \quad\text{and}\quad \XX_p \defeq  \lquot{\GG(\ZZ[\tfrac{1}{p}])}{\GG(\RR \times\QQp)}. \]
    For $1\leq j \leq 4$, we denote by $\XX_j$, $\XX_{\Adeles,j}$ and $\XX_{p,j}$ the $j$-th factor of the spaces $\XX$, $\XX_\Adeles$ and $\XX_p$, respectively. We will write $x= (x_1,x_2,x_3,x_4)$ to denote the components of $x$ in all of these spaces.
    
    To account for the conjugating element between $\SpinV(\ZZ)$ and $\SL_2(\ZZi)$ (see Proposition~\ref{prop:iso-cliff}), we define the maximal compact open subgroup
    \begin{align*}
    \KGf \defeq \Zshift^{-1}\GG(\ZZhat)\Zshift < \GG(\Adeles_f)
    \end{align*}
    Notice that
    $K_f^\GG = \Zshift^{-1}\GG(\ZZ_2)\Zshift\times\prod_{q\neq 2}\GG(\ZZp[q])$.
    Recall from \eqref{eq:defG} that $\GG$ has class number one with respect to $\KGf$. 
    In particular, there is a $\GG(\RR)$-equivariant projection $\XX_{\Adeles} \to \XX$ (resp.~
    $\XX_{\Adeles} \to \XX_p$) defined by taking the quotient with $\KGf$ (resp.~$\KGf\cap\prod_{q\neq p}\GG(\QQp[q])$) from the right. 
    Explicitly, the map $\XX_{\Adeles} \to \XX$ is given by
    \begin{align}\label{eq:pi}
    \pr\colon\XX_\Adeles\to\XX,\ \GG(\QQ)g \mapsto \GG(\ZZ)\Zshift g'_{\infty}
    \end{align}
    where $g'\in\KG$ satisfies $\GG(\QQ)g = \GG(\QQ)g'$.
    Here, we use that $g'_\infty$ is well-defined up to a left translate by an element of $\GG(\QQ)\cap \KGf = \Zshift^{-1}\GG(\ZZ)\Zshift$ where the equality holds because $\Zshift$ normalizes $\GG(\QQ)$.

\subsection{Equidistribution of torus orbits}\label{subsec:planes->measures}

    Let $p$ be an odd prime and let $\L\in\P_{D}$ be an oriented rational plane with $D$ odd and square-free. For the dynamical \namecref{thm:dynamical-version} we will associate a measure $\mu_\L$ on the $p$-adic extension $\XX_p$ to the given oriented rational plane $\L$.
    We consider the shifted adelic torus orbit 
    \begin{align}\label{eq:Adelic-Orbit}
         \GG(\QQ)\TT_\L(\Adeles)y_\L\subseteq\XX_\Adeles
    \end{align}
    (which can be thought of as a lift of \eqref{eq:Real-Orbit}). 
    It is compact because $\TT_\L$ is a two-dimensional $\QQ$-anisotropic torus, since $D$ is not a square. We project this compact adelic torus orbit 
    to $\XX_p$ and denote the image by $Y_\L\subseteq\XX_p$.
    We define the measure $\mu_\L$ on $Y_L$ to be the pushforward of the normalized Haar measure on the orbit \eqref{eq:Adelic-Orbit} under the natural projection to $\XX_p$.
    
    \begin{definition}
    	A sequence $(\L_n)_n$ of oriented rational planes with $\disc(\L_n) = D_n>0$ is $p$-\highlight{admissible} if
        \begin{itemize}
    		\item $\Leg{D_n}{p} = 1$ or $\Leg{-D_n}{p} = 1$ for all $n\geq1$,
    		\item $D_n\to\infty$ as $n\to\infty$, and
    		\item $D_n$ is square-free and odd.\label{itm:square-free}
    	\end{itemize}
    \end{definition}
    
    \begin{remark}\label{rem:sqfree}
    	The assumptions that $D_n\to\infty$ and that $D_n$ is square-free imply that $\QF(\prKlein(\L_n))\to\infty$; this is the main reason why we assume that $D_n$ is square-free.
    \end{remark}

    We are ready to state the dynamical version of \Cref{thm:equi}, whose proof we postpone to \Cref{sec:proof-of-dyn-thm}.
    
    \begin{theorem}[Equidistribution of packets]\label{thm:dynamical-version}
        Let $(\L_n)_n$ be a $p$-admissible sequence of oriented rational planes. Then $\mu_{\L_n} \to m_{\XX_p}$ in the weak-* topology, as $n\to\infty$.
    \end{theorem}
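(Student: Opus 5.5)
The proof combines a joinings argument with equidistribution of each factor. By compactness of probability measures on $\XX_p$ (after ruling out escape of mass), I pass to a subsequence with $\mu_{\L_n}\to\mu$. The goal is to show $\mu=m_{\XX_p}$ in three stages: first, each projection of $\mu$ to a factor $\XX_{p,j}$ is Haar; second, $\mu$ is invariant under a higher-rank diagonalizable group; third, the joinings classification of Einsiedler--Lindenstrauss \cite{einsiedler_joinings_2019} forces $\mu$ to be the product measure.

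\emph{Step 1: equidistribution in each factor.} For $j=1$ the projected measure is the pushforward of the Haar measure on an adelic orbit of the anisotropic torus $\HH_{\L_n}$ in $\Res{\QQi}(\SL_2)$. The splitting condition is used here: $\Leg{D_n}{p}=1$ or $\Leg{-D_n}{p}=1$ means $\HH_{\L_n}$ is split (at least partially) at $p$. By Remark~\ref{rem:special-ClGr} and Corollary~\ref{cor:stabilizer-subgroup}, one of the rank-one tori $\HH^{pt}_{\L_n}$, $\HH^{pt}_{\oL_n}$ splits over $\QQ_p$, and the other splits over $\RR$ (Lemma~\ref{lem:torus-real-compact}). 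The measures therefore have positive entropy under a fixed diagonal element $a\in\GG(\RR\times\QQ_p)$. I would invoke the variant of Duke's theorem (Theorem~\ref{thm:Duke}) for $\Res{\QQi}(\SL_2)$, i.e.\ subconvexity for the relevant $\mathrm{GL}_2$-type $L$-functions over $\QQi$, via Waldspurger-type formulas. Alternatively, I would use Linnik's ergodic method, which applies since the orbit volumes grow like $D_n^{1/2\pm o(1)}$ by Siegel's bound. Either route gives Haar marginals. Factors $j=2,3$ are the classical Duke theorem for CM points and closed geodesics of discriminant $\mp D_n$. Factor $j=4$ again reduces to Theorem~\ref{thm:Duke} for binary forms over $\ZZi$ of discriminant $D_n$, using Lemma~\ref{lem:disc-of-orth} and Lemma~\ref{lem:prim-KL}. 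Remark~\ref{rem:sqfree} guarantees that this discriminant grows. Haar marginals also rule out escape of mass, so $\mu$ is a probability measure.

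\emph{Step 2: invariance and ergodic decomposition.} Each $\mu_{\L_n}$ is invariant under $y_{\L_n}^{-1}\TT_{\L_n}(\RR\times\QQ_p)y_{\L_n}$. By Lemma~\ref{lem:torus-real-compact}, its real part contains the fixed one-parameter group $\{(a_s,1,a_{2s},a_{2s})\}$. After conjugating the $p$-adic components into a fixed diagonal group, the $p$-adic part contains a fixed $\QQ_p$-diagonal element acting nontrivially in the relevant factors. The limit $\mu$ is therefore invariant under a rank-two diagonalizable group $A'$ with one real and one $p$-adic direction. It is a joining whose marginals are Haar, or, in factor 2 where the real direction is trivial, at least invariant under the $p$-adic element.

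\emph{Step 3: classification of the joining.} The main obstacle is the joinings step. Einsiedler--Lindenstrauss say that every ergodic component of $\mu$ is algebraic: it is Haar on an orbit of a subgroup containing a product of $\QQ$-isogenies between factors, i.e.\ a graph of a $\QQ$-isogeny between some factors $\GG_j$ and $\GG_k$. I would rule out such nontrivial diagonal joinings in two ways. First, $\GG_1,\GG_4$ (restriction of scalars from $\QQi$) are not $\QQ$-isogenous to $\GG_2,\GG_3$ (split $\SL_2$), since their real points differ ($\SL_2(\CC)$ versus $\SL_2(\RR)$). Second, between factors that are $\QQ$-isogenous ($1$ versus $4$, and $2$ versus $3$), I would use the differing real speeds $a_s$ versus $a_{2s}$, or compact versus split behaviour in factor~2, as the elementary disjointness argument at the archimedean place mentioned in the introduction: an isogeny would have to intertwine the actions of $(a_s,\dots,a_{2s})$, which is impossible for these eigenvalue ratios. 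Pairs $2,3$ are separated the same way, since factor 2 carries a compact real action and factor 3 a split one. Hence every ergodic component is $\GG(\RR\times\QQ_p)^+$-invariant, and together with the Haar marginals this gives $\mu=m_{\XX_p}$.
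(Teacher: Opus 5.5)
Your Step~1, your Step~2 and your pairwise arguments for $(1,3)$, $(3,4)$ (non-isomorphic groups) and $(1,4)$ ($a_s$ versus $a_{2s}$) match the paper. Step~3 as a whole fails, however. You apply the Einsiedler--Lindenstrauss classification \cite{einsiedler_joinings_2019} to $\mu$ on all four factors, and its higher-rank hypotheses fail on factor~2.

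The real part $\{(a_s,1,a_{2s},a_{2s})\}$ of the invariance group is trivial on $\XX_{p,2}$, and $\GG_2(\QQ_p)$ has rank one, so the action there is at most rank one. Your Step~1 claim is also wrong when $\Leg{D_n}{p}=1$. You say one of $\HH^{pt}_{L_n},\HH^{pt}_{L_n^\perp}$ splits at $p$ and the \emph{other} at $\infty$. In that case $\HH^{pt}_{L_n}$, which acts on $L_n^\perp$ where $-\disc=D_n$, splits at both places. For $p\equiv 3 \bmod 4$ it is then the only split direction at $p$. Its limit $\HH^{pt}_{E_p}$ fixes $E_p$ pointwise, so the $p$-adic element acts trivially on factor~2, and the whole $\ZZ^2$-action on $\XX_{p,2}$ is trivial. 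The ergodic components of the target measure $m_{\XX_p}=m_{\XX_p'}\times m_{\XX_{p,2}}$ are then $m_{\XX_p'}\times\delta_x$. So your conclusion that every ergodic component is $\GG(\RR\times\QQ_p)^+$-invariant is false, and the $(2,3)$ ``compact versus split'' comparison cannot be run inside the joinings theorem. Similarly, when $\Leg{-D_n}{p}=1$ the split $p$-adic torus is $\HH^{pt}_{E_p^\perp}$, which is trivial on factor~3, so factor~3 sees only the real direction.

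The missing idea is to decouple these factors before invoking the rigidity theorem. The real flow is ergodic on $\XX_p'=\XX_{p,1}\times\XX_{p,3}\times\XX_{p,4}$ (Howe--Moore) and trivial on $\XX_{p,2}$. By Lemma~\ref{lem:joinings-for-ergodic-and-trivial} it therefore suffices to show that the marginal $\mu'$ on $\XX_p'$ is Haar (Lemma~\ref{lem:forget-second}); your compact-versus-split intuition belongs here. In the case $\Leg{-D_n}{p}=1$, split off factor~3 the same way, using the $p$-adic element, which is trivial on factor~3 and ergodic on $\XX_{p,1}\times\XX_{p,4}$. Only on the remaining factors is the $\ZZ^2$-action of class-$\mathcal{A}'$ with both directions nontrivial. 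There Thm.~1.4 and Cor.~1.5 of \cite{einsiedler_joinings_2019}, together with your pairwise arguments, finish the proof.

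A smaller point on Step~1: the marginals are not full Duke packets. Factors~2--4 are images under the maps $\theta$, and factor~1 is a norm-one torus orbit. So neither classical Duke nor Siegel's bound alone suffices. To verify the volume hypothesis of Theorem~\ref{thm:Duke}, you need that these images contain all squares and that the squares have index $D_n^{o(1)}$ by $2$-torsion bounds (Lemmas~\ref{lem:relationtoclassgroups}, \ref{lem:large-packets}, \ref{lem:image-of-third-morphism}).
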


    The congruence condition with respect to the prime $p$ in \Cref{thm:dynamical-version} is assumed to exploit the invariance of the measures $\mu_{\L_n}$ under a higher-rank diagonalizable action.


\subsection{Proof of \texorpdfstring{{\Cref{thm:equi}}}{thm} from \texorpdfstring{{\Cref{thm:dynamical-version}}}{thm}.}\label{sec:arithmetic-result}

    In the following, we fix an oriented rational plane $\L\in\P_{D}$ with $D>0$ square-free and odd.
    By \cite[Thm 5.1]{platonov_algebraic_1994}, the class number of $\TT_\L$ with respect to $\KGf$ 
	\[ \left|\dquot{\TT_\L(\QQ)}{\TT_\L(\Adeles)}{\KT}\right|, \]
	is finite, where $\KTf\defeq \TT_\L(\Adeles_f) \cap\KGf$. In particular, we can decompose the shifted adelic torus orbit
	\begin{align}\label{eq:dec-of-TT}
		\orb_\L \defeq \GG(\QQ)\TT_\L(\Adeles)y_\L = \bigsqcup_{\rho\in R_\L}\GG(\QQ)\rho(\KT)y_\L \eqdef \bigsqcup_{\rho\in R_\L} \orb_{\rho}
	\end{align}
    into finitely many distinct $y_L^{-1}(\KT) y_L$-orbits $\orb_{\rho}$ for a subset $R_\L \subseteq \KG$ of cardinality equal to the class number of $\TT_\L$.
    
    We now construct rational planes from the above adelic torus orbit. 
    For ease of exposition, we write $\L_\gamma=\gamma_1.\L$ for all $\gamma\in\GG(\QQ)$ and all oriented rational planes $\L$.
    
	\begin{proposition}[Generating integral points]\label{prop:integral-pts-in-adelic-torus-orbit}
		Let $g\in\KG$ be such that $\GG(\QQ)g\in\GG(\QQ)\TT_\L(\Adeles)$. 
        If $\gamma\in\GG(\QQ)$ satisfies $\gamma^{-1}g\in\TT_\L(\Adeles)$, the following hold:
		\begin{enumerate}
			\item $\L_\gamma$ is an oriented rational plane with the same discriminant as $\L$.\label{en:planes}
			\item\label{en:shapes} The quadratic form $\gamma_2.\qL$ is integral and equivalent to $\q_{\L_\gamma}$. Moreover,
            \begin{align*}
            \gamma_{2}M_\L \in \SL_2(\ZZ)M_{\L_\gamma}\theta_{\Lo}(k_{\L_\gamma}^{-1}\gamma_{1}k_\L) 
            \end{align*}
            where $M_\L$ is defined in \eqref{eq:CM-point-qL}.
			Analogous statements hold for $\qoL$ and~$\qoKL$.
		\end{enumerate}
	\end{proposition}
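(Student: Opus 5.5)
Write $t \defeq \gamma^{-1}g \in \TT_\L(\Adeles)$ and $t = \Theta_\L(h)$ with $h\in\HH_\L(\Adeles)$, so $g = \gamma\,\Theta_\L(h)$. The idea is a local--global argument: at each finite place $q$, the component $g_q$ lies in $\KGf$, i.e.\ in the relevant $\ZZ_q$-integral group. The relation $\gamma = g\,\Theta_\L(h)^{-1}$ therefore says that, over $\ZZ_q$, the rational element $\gamma$ acts on the objects attached to $\L$ as an integral transformation composed with an element stabilizing these objects.

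For \eqref{en:planes}, $\gamma_1\in\SpinV(\QQ)$ maps $\L$ to a rational plane, and the orientation is preserved since $\SpinV$ acts by $\SO_\Q$. For the discriminant, compare the lattices $\gamma_1.\L(\ZZ_q)$ and $\L_\gamma(\ZZ_q)=\gamma_1.\L\cap\VZ\otimes\ZZ_q$ locally at each prime $q$.
\begin{itemize}
\item We have $\gamma_{1,q} = g_{1,q}h_q^{-1}$ with $h_q\in\HH_\L(\QQ_q)$ preserving $\L(\QQ_q)$.
\item Hence $\gamma_{1,q}.\L(\QQ_q) = g_{1,q}.\L(\QQ_q)$.
\item Since $g_{1,q}$ lies in $\SpinV(\ZZ_q)$ (after the $\Zshift$-conjugation, by Proposition~\ref{prop:iso-cliff}), it preserves $\VZ\otimes\ZZ_q$.
\item Therefore $\L_\gamma(\ZZ_q) = g_{1,q}.\L(\ZZ_q)$, which has the same discriminant as $\L(\ZZ_q)$ up to $\ZZ_q^{\times 2}$.
\end{itemize}
Discriminants are positive integers (or have the same sign), and their $q$-adic valuations agree for all $q$. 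Hence they coincide.

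For \eqref{en:shapes}, I would express $\theta_\L$ through the basis matrix $\BT{\L}$. The integrality of $\gamma_2.\qL$ follows from $\gamma_2 = g_2\theta_\L(h)^{-1}$ componentwise: $\theta_\L(h_q)$ preserves $\qL$, and $g_{2,q}\in\SL_2(\ZZ_q)$ preserves integrality, so $\gamma_2.\qL$ is integral at every $q$ and hence integral. For equivalence with $\q_{\L_\gamma}$, transport a basis along the following path.
\begin{itemize}
\item The rows of $\theta_\L(h_q)^{-1}$-twisted $\BT{\L}$ give a $\ZZ_q$-basis of $\L(\ZZ_q)$ after composing with $g_{1,q}$.
\item Concretely, $\gamma_2^{-1}$ applied to the first two rows of $\BT{\L}$ and pushed by $\gamma_1$ yields a rational basis of $\L_\gamma$ that is $\ZZ_q$-integral and unimodular for every $q$, hence a $\ZZ$-basis of $\L_\gamma(\ZZ)$.
\item This exhibits $\gamma_2.\qL$ as $\q_{\L_\gamma}$ in some oriented basis, giving an element of $\SL_2(\ZZ)$.
\end{itemize}
The identity $\gamma_2 M_\L\in\SL_2(\ZZ)M_{\L_\gamma}\theta_{\Lo}(k_{\L_\gamma}^{-1}\gamma_1k_\L)$ then comes from $k_{\L_\gamma}^{-1}\gamma_1k_\L\in\HH_{\Lo}(\RR)$. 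One unwinds $M_{\L_\gamma}=D^{-1/4}\pi_{\upperleft}(\BT{\L_\gamma}[\Adj_{k_{\L_\gamma}}])$ using block-triangularity and the multiplicativity of $\pi_{\upperleft}$ on block-triangular matrices. The same argument applies verbatim for $\qoL$ with $\pi_{\lowerright}$, and for $\qoKL$ over $\ZZi$ with $[[\Adj]]$, $\WZ$, and Lemma~\ref{lem:disc-of-orth}. For $\qoKL$ one additionally needs the equivariance of the Klein map and $\prKlein$ (Proposition~\ref{prop:Klein-map}, Lemma~\ref{lem:prim-KL}), so that $\gamma_1$ carries $\Lambda_\L$ to $\Lambda_{\L_\gamma}$ locally.

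The main obstacle is the bookkeeping at $q=2$, where $\KGf$ is twisted by $\Zshift$. There one must check that $\Zshift^{-1}\SL_2(\ZZ_2[i])\Zshift$ corresponds exactly to $\SpinV(\ZZ_2)$; this is the local version of Proposition~\ref{prop:iso-cliff}, whose proof is local. One must also check that the second and third factors, with their $\SL_2(\ZZ_2)$, are compatible with $\theta_\L$ and $\theta_{\oL}$ at $2$.
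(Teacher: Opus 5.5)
Your argument is essentially the paper's. Both write $g=\gamma\,\Theta_\L(h)$ and use that $g$ is integral at every finite place while $h$ fixes $v_1\wedge v_2$ and acts on $\L$ through $\theta_\L(h)$. The paper phrases (1) as $\gamma_1.(v_1\wedge v_2)=g_{1,q}.(v_1\wedge v_2)$ being a primitive integral wedge, which also yields $\Klein(\L_\gamma)=\Adj_{\gamma_1}(\Klein(\L))$ for the $\qoKL$ part. It phrases (2) as $\gamma_2A\in\GL_2(\QQ)\cap\GL_2(\RR\times\ZZhat)=\GL_2(\ZZ)$ with $A=\pi_{\upperleft}(\BT{\L}[\Adj_{\gamma_1}]^{-1}\BT{\L_\gamma}^{-1})$, which is your local lattice comparison and basis transport in matrix form, followed by the same block-triangular computation.

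One slip to fix: the basis to transport is $u_i=\sum_k(\gamma_2)_{ik}v_k$, not one built from $\gamma_2^{-1}$. Then $\Adj_{\gamma_1}(u_i)=\sum_k(g_2)_{ik}\Adj_{g_1}(v_k)$ is visibly a $\ZZ_q$-basis of $\L_\gamma(\ZZ_q)$ (not of $\L(\ZZ_q)$, as your first bullet says) for every $q$.
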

    It follows that for any $\rho \in R_\L$, we obtain an oriented plane $\L_{\rho}\defeq \L_{\gamma}$ which is an element of $\P_{D}$ for a choice of $\gamma\in\GG(\QQ)$ satisfying $\gamma^{-1}\rho\in\TT_\L(\Adeles)$.
	\begin{proof}
		By assumption, we can write $g=\gamma t$ with $\gamma\in\GG(\QQ)$ and $t=\Theta_\L(h)\in\TT_\L(\Adeles)$, for some $h\in\HH_\L(\Adeles)$. 
        
        Note that for every prime $p$ and for $v_1,v_2$ an oriented $\ZZ$-basis of $\L$ we have
        \begin{align*}
            \varV[](\ZZ_p) \ni g_{1,p}.(v_1 \wedge v_2) = \gamma_1h_{p}.(v_1\wedge v_2) = \gamma_1.(v_1\wedge v_2) \in \varV[](\QQ)
        \end{align*}
        using $g_{1,p} = \gamma_1h_{p}$. Thus, $\gamma_1.(v_1\wedge v_2)$ is a primitive integral wedge and so necessarily equal to the integral wedge corresponding to $\L_\gamma$.
        In particular, $\disc(\L) = \disc(\L_{\gamma})$ and $\Klein(\L_\gamma) = \Adj_{\gamma_1}(\Klein(\L))$ by equivariance of the Klein map. 
		
		To prove \eqref{en:shapes}, we first note that $\gamma_1 k_\L\in\SpinV(\RR)$ satisfies $\gamma_1 k_\L.\Lo(\RR) = \L_\gamma(\RR)$. Moreover, the rational matrix
		$\BT{\L}[\Adj_{\gamma_1}]^{-1}\BT{\L_\gamma}^{-1}$ maps the basis (row)  vectors $e_1$ and $e_2$ of the plane $\Lo$ to some vectors in $\Lo$ and is thus of the block form
		\[ \BT{\L}[\Adj_{\gamma_1}]^{-1}\BT{\L_\gamma}^{-1} \in \left\{\begin{pmatrix}
			\begin{array}{cccc}
				* & * & 0 & 0 \\
				* & * & 0 & 0 \\
				* & * & * & * \\
				* & * & * & * \\
			\end{array}
		\end{pmatrix}\right\}. \]
		For $A \defeq \pi_{\upperleft}(\BT{\L}[\Adj_{\gamma_1}]^{-1}\BT{\L_\gamma}^{-1}) \in\GL_2(\QQ)$ we get
		\begin{align*}
			\gamma_2.\qL(x,y)&= \qL((x,y)\gamma_2) = \Q(((x,y)\gamma_2,0,0)\BT{\L})\\
			&= \Q(((x,y)\gamma_2,0,0)\BT{\L}[\Adj_{\gamma_1}]^{-1}\BT{\L_\gamma}^{-1}\BT{\L_\gamma}) = \gamma_2A.q_{\L_\gamma}(x,y).
		\end{align*}
		We now show that $\gamma_2A\in\SL_2(\ZZ)$ which implies that $\gamma_2.\qL$ is indeed integral and equivalent to $\q_{\L_\gamma}$. Recall that $\gamma = gt^{-1} = g\Theta_\L(h)^{-1}$. Therefore, we have
		\begin{align}
            \begin{split}
    			\GL_2(\QQ)\ni \gamma_2A &= g_2t_2^{-1}\pi_{\upperleft}(\BT{\L}[\Adj_{g_1 h^{-1}}]^{-1}\BT{\L_\gamma}^{-1})\\
    			&= g_2t_2^{-1}\pi_{\upperleft}(\BT{\L}[\Adj_{h}]\BT{\L}^{-1})\pi_{\upperleft}(\BT{\L}[\Adj_{g_1}]^{-1}\BT{\L_\gamma}^{-1})\\
    			&=g_2\pi_{\upperleft}(\BT{\L}[\Adj_{g_1}]^{-1}\BT{\L_\gamma}^{-1}) \in \GL_2(\RR\times\ZZhat),
            \end{split}\label{eq:integral-matrix}
		\end{align}
		where the last equality follows from $t_2 = \theta_\L(h) = \pi_{\upperleft}(\BT{\L}[\Adj_{h}]\BT{\L}^{-1})$. Hence, we have $\gamma_2A \in \GL_2(\QQ)\cap\GL_2(\RR\times\ZZhat) = \GL_2(\ZZ)$.
        Since $\gamma_1.(\Klein(\L)) = \Klein(\L_{\gamma})$, the orientations of $\L$ and $\L_\gamma$ match, which is equivalent to $\det(A) > 0$ and so $\gamma_2A\in\SL_2(\ZZ)$. 
        This proves the first claim in \eqref{en:shapes}.
        
        By a straightforward calculation using the definition of $A$
        \begin{align}\label{eq:2nd-comp}
            \begin{split}
                A M_{\L_\gamma}&\theta_{\Lo}(k_{\L_\gamma}^{-1}\gamma_{1}k_\L) \\
                &= \pi_{\upperleft}(\BT{\L}[\Adj_{\gamma_1}]^{-1}\BT{\L_\gamma}^{-1})M_{\L_\gamma}\theta_{\Lo}(k_{\L_\gamma}^{-1}\gamma_{1}k_\L)\\
                &=D^{-1/4}\pi_{\upperleft}(\BT{\L}[\Adj_{\gamma_1}]^{-1}\BT{\L_\gamma}^{-1}\BT{\L_{\gamma}}[\Adj_{k_{\L_{\gamma}}}])\theta_{\Lo}(k_{\L_\gamma}^{-1}\gamma_{1}k_\L)\\
                & = D^{-1/4}\pi_{\upperleft}(\BT{\L}[\Adj_{\gamma_1}]^{-1}[\Adj_{k_{\L_{\gamma}}}])\pi_{\upperleft}([\Adj_{k_{\L_{\gamma}}}]^{-1}[\Adj_{\gamma_1}][\Adj_{k_{\L}}])\\
                &= D^{-1/4}\pi_{\upperleft}(\BT{\L}[\Adj_{k_{\L}}]) = M_{\L}.
            \end{split}
        \end{align}
        Multiplying both sides with $\gamma_2$ and using that $\gamma_2A\in\SL_2(\ZZ)$ gives the second claim in \eqref{en:shapes}.
        The statement for $\gamma_3.\qoL$ is analogous. 
        
        It remains to prove the statement about $\gamma_4.\qoKL$. The matrix 
		\[ \BToK{\L}[[\Adj_{\gamma_1}]]^{-1}\BToK{\L_\gamma}^{-1}\in \SL_3(\QQi) \]
		maps $f_2$ and $f_3$, which span $\oKLoZ$, again to elements in $\oKLoZ$, hence
		\begin{align}\label{eq:BT-matrix}
    		\BToK{\L}[[\Adj_{\gamma_1}]]^{-1}\BToK{\L_\gamma}^{-1} \in \left\{\begin{pmatrix}
    			\begin{array}{ccc}
    				* & * & * \\
    				0 & * & * \\
    				0 & * & * \\
    			\end{array}
    		\end{pmatrix}\right\}.
        \end{align}
		As before, setting $B \defeq \pi_{\slowerright}(\BToK{\L}[[\Adj_{\gamma_1}]]^{-1}\BToK{\L_\gamma}^{-1})\in\GL_2(\QQi)$ we obtain that $\gamma_4 .\qoKL(x,y) = \gamma_4B.\q_{\Lambda_{\L_\gamma}}(x,y)$.
		We show that $\gamma_4B\in\SL_2(\ZZi)$, which implies that $\gamma_4.\qoKL$ is $\ZZi$-integral and equivalent to $q_{\Lambda_{\L_\gamma}}$. Using $\gamma = gt^{-1} = g\Theta_\L(h)^{-1}$ we get
		\begin{align}
            \begin{split}\label{eq:integral-matrix-2}
    			\GL_2(\QQi) \ni \gamma_4B &= \gamma_4\pi_{\slowerright}(\BToK{\L}[[\Adj_{\gamma_1}]]^{-1}\BToK{\L_\gamma}^{-1})\\
    			&=g_4t_4^{-1}\pi_{\slowerright}(\BToK{\L}[[\Adj_{g_1 h^{-1}}]]^{-1}\BToK{\L_\gamma}^{-1})\\
    			&=g_4t_4^{-1}\pi_{\slowerright}(\BToK{\L}[[\Adj_{h}]]\BToK{\L}^{-1})\pi_{\slowerright}(\BToK{\L}[[\Adj_{g_1}]]^{-1}\BToK{\L_\gamma}^{-1})\\
    			&=g_4\pi_{\slowerright}(\BToK{\L}[[\Adj_{g_1}]]^{-1}\BToK{\L_\gamma}^{-1}) \in \GL_2(\CC\times(\ZZhat \otimes \ZZi)),
            \end{split}
		\end{align}
		where the last equality holds as $t_4 = \pi_{\slowerright}(\BToK{\L}[[\Adj_{h}]]\BToK{\L}^{-1})$. Therefore,
		\[ \gamma_4B \in \GL_2(\QQi)\cap \GL_2(\CC\times(\ZZhat \otimes \ZZi)) = \GL_2(\ZZi). \]
        Now recall that the second and third rows of $\BToK{\L}$ and $\BToK{\L_\gamma}$ form an oriented basis of the oriented lattices $\oKLZ$ and $\Lambda_{\L_\gamma}$, respectively. Since $\Adj_{\gamma_1}(\prKlein(\L)) = \Adj_{\gamma_1}(\Klein(\L)/2) = \Klein(\L_{\gamma})/2 = \prKlein(\L_{\gamma})$ as we assume that $D>0$ is square-free and odd, and $\Adj_{\gamma_1}$ preserves orientation, the second and third row of $\BToK{\L}[[\Adj_{\gamma_1}]]^{-1}$ give an oriented basis of $\orth{\Klein(\L_\gamma)}$ as well. Since two oriented bases of $\orth{\Klein(\L_\gamma)}$ differ by an element of $\GL_2(\QQi)$ with determinant in $\QQ_{>0}$ only, this implies that $B$ has determinant $1$. 
        One deduces that
        \begin{align}\label{eq:4th-comp}
            \gamma_{4}N_{\L} \in \SL_2(\ZZi)N_{\L_\gamma}\theta_{\oKLoZ}(k_{\L_\gamma}^{-1}\gamma_{1}k_\L),
        \end{align}
        by a straightforward calculation similar to \eqref{eq:2nd-comp}.
	\end{proof}

    We denote by $\J(\L) = \{\SpinV(\ZZ)\L_{\rho}\st\rho\in R_\L\}\subset \J_{\disc(\L)}$ and call it the \highlight{collection associated to the oriented plane $\L$}. 
    Note that the set $\{\L_{\rho}\st\rho\in R_\L\}$ depends on the choice of representatives $R_\L$, but the collection $\J(\L)$ does not. 
    Next, we make a consistent choice of rotations $k_{\L'}$ (see \S\ref{sec:geodesic-KL}) for all $L'$ with $\Spin_Q(\ZZ)\L'\in \J(\L)$ so that the choice agrees, in particular, with the choice of representatives $R_\L$. 
    For any $\rho \in R_\L$ choose $k_{\L_\rho}= \rho_{\infty,1} k_\L$. For $\gamma \in \Spin_Q(\ZZ)$ set $k_{\gamma.\L_\rho} = \gamma k_{\L_\rho}$.
    Recall the definition of $y_{L'},z_{L'}$ from \S\ref{sec:planes-to-equi} and of $\pr$ from \eqref{eq:pi}.
    
    \begin{lemma}\label{lem:choice-of-reps} 
        We have for all $\rho \in R_\L$
        \[ \pr(\GG(\QQ)\rho y_\L) = \GG(\ZZ)\Zshift\rho_\infty y_\L = \GG(\ZZ)\Zshift y_{\L_\rho} = z_{\L_{\rho}}. \]
    \end{lemma}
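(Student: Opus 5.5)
The plan is to verify the three equalities one at a time. Each one follows from the definitions together with Proposition~\ref{prop:integral-pts-in-adelic-torus-orbit}.

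\textbf{First equality.} Since $\rho\in\KG$ and $y_\L\in\GG(\RR)$, the element $g'=\rho y_\L$ lies in $\KG$, where $\GG(\RR)$ is embedded at the infinite place. We may therefore use $g'$ itself in the formula \eqref{eq:pi} for $\pr$. Its real component is $\rho_\infty y_\L$, and this gives $\pr(\GG(\QQ)\rho y_\L)=\GG(\ZZ)\Zshift\rho_\infty y_\L$ directly.

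\textbf{Last equality.} The identity $\GG(\ZZ)\Zshift y_{\L_\rho}=z_{\L_\rho}$ is simply the definition of $z_{\L_\rho}$ from \S\ref{sec:planes-to-equi}. Here $k_{\L_\rho}$ is the consistent choice $\rho_{\infty,1}k_\L$, and the remaining entries of $y_{\L_\rho}$ are built from it.

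\textbf{Middle equality.} This is the substantive step. We must show $\Zshift\rho_\infty y_\L\in\GG(\ZZ)\Zshift y_{\L_\rho}$ componentwise.

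1. Let $\gamma\in\GG(\QQ)$ be the element with $\gamma^{-1}\rho=t\in\TT_\L(\Adeles)$ used to define $\L_\rho=\L_\gamma$. Then $\rho_\infty=\gamma t_\infty$ with $t_\infty=\Theta_\L(h_\infty)$ for some $h_\infty\in\HH_\L(\RR)$.

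2. In the first factor, $k_{\L_\rho}=\rho_{\infty,1}k_\L$ by our choice. So the first entries already agree exactly: $\Xi_1\rho_{\infty,1}k_\L=\Xi_1 k_{\L_\rho}$.

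3. For the second factor, I write $\rho_{\infty,2}M_\L=\gamma_2\theta_\L(h_\infty)M_\L$. I would then use the identity $\theta_\L(h)M_\L=M_\L\,\theta_{\Lo}(k_\L^{-1}hk_\L)$, which follows from the definitions of $M_\L$ and $\theta$ exactly as in \eqref{eq:2nd-comp}. Applying Proposition~\ref{prop:integral-pts-in-adelic-torus-orbit}\eqref{en:shapes}, I get
\[
\gamma_2 M_\L\,\theta_{\Lo}(k_\L^{-1}h_\infty k_\L)\in \SL_2(\ZZ)M_{\L_\rho}\,\theta_{\Lo}(k_{\L_\rho}^{-1}\gamma_1 k_\L)\,\theta_{\Lo}(k_\L^{-1}h_\infty k_\L).
\]
The two $\theta_{\Lo}$ factors combine to $\theta_{\Lo}(k_{\L_\rho}^{-1}\gamma_1h_\infty k_\L)=\theta_{\Lo}(k_{\L_\rho}^{-1}\rho_{\infty,1}k_\L)$, which is trivial by the choice of $k_{\L_\rho}$. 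Hence $\rho_{\infty,2}M_\L\in\SL_2(\ZZ)M_{\L_\rho}$.

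4. The third factor (with $\delta_3$) and the fourth factor (with $\delta_4$ and $\SL_2(\ZZi)$) are handled identically. For these I use the analogous statements in Proposition~\ref{prop:integral-pts-in-adelic-torus-orbit}, namely \eqref{eq:4th-comp} and its $\oL$-counterpart. The $\delta_j$ sit on the right and are untouched.

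5. Since $\Zshift=(\Xi_1,e,e,e)$, the factors $2$--$4$ give left multiplication by $\SL_2(\ZZ)$, $\SL_2(\ZZ)$ and $\SL_2(\ZZi)$, and the first factor agrees exactly. This proves the middle equality.

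\textbf{Main obstacle.} The delicate point is the first factor, where $\Zshift$ is nontrivial. I would keep the choice $k_{\L_\rho}=\rho_{\infty,1}k_\L$ so that no lattice element is needed there. I would then check that in the other factors the $\GG(\ZZ)$-element appears on the left and the $\Zshift$-conjugation plays no role, because those components of $\Zshift$ are trivial. A secondary check is the intertwining identity $\theta_\L(h)M_\L=M_\L\theta_{\Lo}(k_\L^{-1}hk_\L)$ and its analogues for $N_\L\delta_4$ and $M_{\oL}\delta_3$; I expect these to be routine block-matrix computations.
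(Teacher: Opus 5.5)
Your proposal is correct and is essentially the paper's own argument. The outer equalities hold by definition. The first component agrees exactly because of the choice $k_{L_\rho}=\rho_{\infty,1}k_L$. The remaining components follow from Proposition~\ref{prop:integral-pts-in-adelic-torus-orbit}\eqref{en:shapes}, combined with the intertwining identity $M_L^{-1}\theta_L(h_\infty)M_L=\theta_{L_0}(k_L^{-1}h_\infty k_L)$ and the triviality of $k_{L_\rho}^{-1}\gamma_1 h_\infty k_L$, just as in the paper.
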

    \begin{proof}
    The first and last equalities are direct consequence of our definitions and we verify the second equality componentwise.
    In the first component we have equality by the choice of $k_{L_\rho}$.
    We prove equality in the second component, the third and fourth are analogous.
    Let $\gamma \in \GG(\QQ)$ with $ t= \gamma^{-1}\rho\in\TT_\L(\Adeles)$ and write $t = \Theta_L(h)$.
    Using Proposition~\ref{prop:integral-pts-in-adelic-torus-orbit}\eqref{en:shapes}, we have
    \begin{align*}
    \SL_2(\ZZ)\rho_{\infty,2}M_{L}
    &= \SL_2(\ZZ) \gamma_2 \theta_L(h_\infty) M_L
    = \SL_2(\ZZ) \gamma_2  M_L \theta_{L_0}(k_L^{-1}h_\infty k_L)\\
    &= \SL_2(\ZZ) M_{L_\gamma} \theta_{L_0}(k_{L_\gamma}^{-1}\gamma_1k_L) \theta_{L_0}(k_L^{-1}h_\infty k_L) = \SL_2(\ZZ) M_{L_\gamma}
    \end{align*}
    where we used that $M_L^{-1}\theta_L(h_\infty) M_L= \theta_{L_0}(k_L^{-1}h_\infty k_L)$ and that $k_{L_\gamma}^{-1}\gamma_1h_\infty k_L$ is trivial by our choices.
    \end{proof}

    We define the map $\prK\colon\XX_\Adeles\to \tquot{\XX}{\KR}$ by postcomposing $\pr$ with the natural projection $\XX\to\tquot{\XX}{\KR}$ where $\KR$ is defined before \Cref{lem:torus-real-compact}. Recall the definition of $\orb_{\L}$ and $\orb_{\rho}$ for $\rho\in R_L$ in \eqref{eq:dec-of-TT}.
    
    \begin{lemma}\label{lem:disjoint-proj}
        Suppose for $\rho,\rho'\in R_\L$ that $\prK(\orb_{\rho})_1 \cap \prK(\orb_{\rho'})_1 \neq \emptyset$. Then $\rho = \rho'$. In particular, $\prK(\orb_{\L}) = \bigsqcup_{\rho\in R_\L} \prK(\orb_{\rho})$ is a disjoint union.
    \end{lemma}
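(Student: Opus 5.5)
Here is the plan. Suppose the first components of $\prK(\orb_\rho)$ and $\prK(\orb_{\rho'})$ meet in $\Tan(\Y_{\QQi})=\X_{\QQi}/A^1_\CC$. Every point of $\orb_\rho$ has the form $\GG(\QQ)\rho\, t\, y_\L$ with $t\in\TT_\L(\RR)$, since $\TT_\L(\Adeles_f)\cap\KGf$ is absorbed by $\KGf$ under $\pr$. By Lemma~\ref{lem:choice-of-reps} and its proof, $\pr$ of this point is $z_{\L_\rho}\,(y_\L^{-1} t y_\L)$. Its first component is therefore $\SL_2(\ZZi)\Xi_1 k_{\L_\rho} a$ with $a\in A_\CC$, and after projecting to $\Tan(\Y_{\QQi})$ we may take $a\in A$. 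So the hypothesis gives
\[ \SL_2(\ZZi)\Xi_1 k_{\L_\rho} a = \SL_2(\ZZi)\Xi_1 k_{\L_{\rho'}} a' c \]
with $a,a'\in A$ and $c\in A_\CC^1$. Rewriting this with Proposition~\ref{prop:iso-cliff}, I obtain $\delta\in\SpinV(\ZZ)$ such that $k_{\L_\rho}=\delta k_{\L_{\rho'}} a'' c$ with $a''c\in A_\CC=\HH_{\Lo}(\RR)$.

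Next I would apply both sides to $\Lo(\RR)$. Because $A_\CC$ stabilizes the oriented plane $\Lo$, this gives $\L_\rho(\RR)=\delta.\L_{\rho'}(\RR)$ as oriented real planes, and since both are rational, $\L_\rho=\delta.\L_{\rho'}$. Hence the planes $\L_\rho$ and $\L_{\rho'}$ lie in the same $\SpinV(\ZZ)$-class.

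The remaining step, which I expect to be the main obstacle, is to show that $\SpinV(\ZZ)\L_\rho=\SpinV(\ZZ)\L_{\rho'}$ forces $\rho=\rho'$. In other words, the map $R_\L\to\J(\L)$ must be injective, and this needs control of all four components, not just the first. I would write $\L_\rho=\gamma_1.\L$ and $\L_{\rho'}=\gamma'_1.\L$ with $\gamma^{-1}\rho,\gamma'^{-1}\rho'\in\TT_\L(\Adeles)$. Then $h:=\gamma_1'^{-1}\delta^{-1}\gamma_1$ lies in $\HH_\L(\QQ)$, so $\Theta_\L(h)\in\TT_\L(\QQ)$.

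I would then compare $\rho$ with $\gamma'\Theta_\L(h)\gamma^{-1}\rho$ in the double coset $\TT_\L(\QQ)\backslash\TT_\L(\Adeles)/\KT$. The goal is to show that $\gamma'^{-1}\rho'$ and $\Theta_\L(h)\gamma^{-1}\rho$ differ by $\TT_\L(\RR)\KT$. This amounts to checking that $(\delta,\cdot,\cdot,\cdot)$ can be completed to an element of $\GG(\ZZ)$ conjugated by $\Zshift$. For the other components I would use the integrality statements in Proposition~\ref{prop:integral-pts-in-adelic-torus-orbit}\eqref{en:shapes}: the matrices $\gamma_2A$ and $\gamma_4B$ are integral, so conjugation by $\delta$ induces integral changes of basis on $\L(\ZZ)$, $\oL(\ZZ)$ and $\Lambda_\L$. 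The key point is that $\Theta_\L$ is defined through the action of $\HH_\L$ on these lattices, so an element of $\HH_\L(\QQ)$ whose image under $\Adj$ preserves $\VZ$ locally everywhere has all components of $\Theta_\L$ in $\KGf$. From there the distinctness of the orbits $\orb_\rho$ in \eqref{eq:dec-of-TT} gives $\rho=\rho'$, and the disjoint-union statement follows at once.
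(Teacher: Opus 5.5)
Your proof is correct. It uses the same mechanism as the paper: an element of $\HH_L(\QQ)$ relating the two torus classes is extracted from the first factor alone, and integrality of the first component of $\Theta_L$ forces integrality of the other three. The packaging is different.

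The paper stays adelic throughout. Equality of first components gives $\delta\in\GG_1(\QQ)$ with $\delta\tau_1t_1\in\tau_1't_1'(\text{compact}\times(\KGf)_1)$. The archimedean place then forces $\delta\in\HH_L(\QQ)$. The two classes in $\TT_L$ are identified via the local observation $\Theta_L(\HH_L(\Adeles_f)\cap(\KGf)_1)\subseteq\KTf$.

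You instead pass through the real quotient. You use $\Xi_1^{-1}\SL_2(\ZZi)\Xi_1=\SpinV(\ZZ)$ to get $L_\rho=\delta.L_{\rho'}$ with $\delta\in\SpinV(\ZZ)$, and then prove that $\rho\mapsto\SpinV(\ZZ)L_\rho$ is injective on $R_L$. Your $h=(\gamma_1')^{-1}\delta^{-1}\gamma_1$ plays the role of the paper's $\delta$. Your route makes this injectivity explicit; it is exactly what lets one rewrite the sum over $R_L$ in Proposition~\ref{prop:Projection} as a sum over $\J(L)$. The cost is an extra step and a call to Proposition~\ref{prop:integral-pts-in-adelic-torus-orbit}. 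The paper's route is shorter.

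Two points in your last paragraph need sharpening.

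First, control of the other three components is not an additional obstacle; it comes for free from the first.

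Second, the integrality principle must be applied to the right element. In general $h\in\HH_L(\QQ)$ does not preserve $\VZ$, and ``completing $(\delta,\cdot,\cdot,\cdot)$'' is not a free choice. What you need is that the forced element $g:=\gamma'\Theta_L(h)\gamma^{-1}\in\GG(\QQ)$, whose first component is $\delta^{-1}$, lies in $\Zshift^{-1}\GG(\ZZ)\Zshift$. This follows from three facts:
\begin{enumerate}
\item $\gamma_2A$, its analogue for $L^\perp$, and $\gamma_4B$ are integral;
\item the same holds for $\gamma'$;
\item $\delta$ maps $L_{\rho'}(\ZZ)$, $L_{\rho'}^\perp(\ZZ)$ and $\Lambda_{L_{\rho'}}$ onto the corresponding lattices of $L_\rho$, preserving orientation.
\end{enumerate}
Equivalently, apply the paper's observation to the finite part of $\rho'^{-1}g\rho$. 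Its first component $(\rho'_{1,f})^{-1}\delta^{-1}\rho_{1,f}$ lies in $\HH_L(\Adeles_f)\cap(\KGf)_1$. Then
\[
((\gamma')^{-1}\rho')^{-1}\,\Theta_L(h)\,(\gamma^{-1}\rho)=\rho'^{-1}g\rho\in\TT_L(\RR)\times\KTf,
\]
so $\orb_\rho=\orb_{\rho'}$ and hence $\rho=\rho'$.
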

    \begin{proof}
        By definition of $R_\L$ there exist $\gamma,\gamma'\in\GG(\QQ)$ such that $\tau \defeq \gamma\rho\in\TT_\L(\Adeles_f)$ and $\tau' \defeq \gamma'\rho'\in\TT_\L(\Adeles_f)$. By assumption, there exist $t,t'\in\KT$ such that
        \[ \prK(\GG(\QQ)\tau t y_\L)_1 = \prK(\GG(\QQ)\tau' t' y_\L)_1. \]
        Thus, by \Cref{lem:torus-real-compact} and the definition of $\prK$ there exists $\delta\in\GG_1(\QQ)$ such that
        \[ \delta \tau_1 t_1 k_\L \in \tau'_1 t'_1 k_\L (\HH^{pt}_{\Lo}(\RR)\times (\KGf)_1) \]
        which is equivalent to $ \delta \tau_1 t_1 \in \tau'_1 t'_1 (\HH^{pt}_{\L}(\RR)\times (\KGf)_1)$  since $\HH^{pt}_{\L}(\RR) = k_\L\HH^{pt}_{\Lo}(\RR) k_\L^{-1}$. Looking at the infinite place we obtain that $\delta\in\HH_\L(\RR)\cap\GG(\QQ) = \HH_\L(\QQ)$ and so it follows that $\delta \tau_1 t_1\in\HH_\L(\Adeles)$. This implies that
        \[ \delta \tau_1 t_1 \in \tau'_1 t'_1(\HH^{pt}_{\L}(\RR)\times (\KTf)_1). \]
        Recall that the map $\Theta_\L\colon\HH_\L\to\TT_\L$ is defined over $\QQ$ so $\Theta_L(\delta)\in\GG(\QQ)$ and observe further that $\Theta_\L\colon(\KTf)_1\to\KTf$. 
        In particular,
        \[ \Theta_\L(\delta \tau_1 t_1) \in \Theta_\L(\tau'_1 t'_1(\HH^{pt}_{\L}(\RR)\times (\KTf)_1)) \subseteq  \Theta_\L(\tau'_1 t'_1)(\TT_{\L}(\RR)\times\KTf). \]
        Summing up, we get
        \begin{align*}
            \orb_{\rho} &= \GG(\QQ)\tau(\KT)y_\L
            = \GG(\QQ)\Theta_\L(\delta)\tau t(\KT)y_\L \\
            & = \GG(\QQ)\Theta_{\L}(\delta\tau_1 t_1)(\KT)y_\L = \GG(\QQ)\Theta_{\L}(\tau'_1 t'_1)(\KT)y_\L\\
            & = \GG(\QQ)\tau'(\KT)y_\L = \orb_{\rho'},
        \end{align*}
        which implies that $\rho = \rho'$ and concludes the proof.
    \end{proof}
    	
	Now that we have discussed how to obtain a collection $\J(\L)$ from a single shifted adelic torus orbit $\GG(\QQ)\TT_\L(\Adeles)y_\L$ we can describe the measure $\mu_\L$ (defined in \S \ref{subsec:planes->measures}) on the $p$-adic extension $\XX_p$ in detail.

    \begin{proposition}\label{prop:Projection}
        The pushforward of the measure $\mu_{\L}$ under $\prK$ is equal to
        \begin{align*}
            \frac{1}{\abs{R_\L}}\sum_{\SpinV(\ZZ)\L'\in \J(\L)}\nu^{joint}_{\L'}.
        \end{align*}
    \end{proposition}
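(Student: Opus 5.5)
The plan is to compute the pushforward under $\prK$ orbit piece by orbit piece, using the decomposition \eqref{eq:dec-of-TT}. The Haar probability measure on $\orb_\L$ is invariant under $y_\L^{-1}\TT_\L(\Adeles)y_\L$. Each piece $\orb_\rho = \GG(\QQ)\rho(\KT)y_\L$ is a translate of the same $y_\L^{-1}(\KT)y_\L$-orbit, so each has mass exactly $1/\abs{R_\L}$. On each piece the restricted measure is the pushforward of Haar measure on $\TT_\L(\RR)\times\KTf$, normalized by the stabilizer. Since $\prK$ factors through $\XX_p$ (quotienting by more of $\KGf$ and by $\KR$), it suffices to compute $\prK_*(m_{\orb_\rho})$ for each $\rho$ and sum with weights $1/\abs{R_\L}$.

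For a fixed $\rho$ I would first quotient by $\KTf\subseteq\KGf$, on which $\pr$ is invariant. Hence $\pr(\GG(\QQ)\rho t_\infty t_f y_\L) = \pr(\GG(\QQ)\rho y_\L)\, y_\L^{-1}t_\infty y_\L$, using that $y_\L$ is archimedean and commutes past the finite part. By Lemma~\ref{lem:choice-of-reps} this equals $z_{\L_\rho}\, y_\L^{-1}t_\infty y_\L$. Thus $\pr_*(m_{\orb_\rho})$ is the Haar probability measure on the compact orbit $z_{\L_\rho}\, y_\L^{-1}\TT_\L(\RR)y_\L$. Next I apply Lemma~\ref{lem:torus-real-compact}. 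The group $y_\L^{-1}\TT_\L(\RR)y_\L$ splits as the one-parameter group $\{(a_s,1,a_{2s},a_{2s})\}$ times a subgroup of $\KR$. After projecting to $\XX/\KR$, the measure becomes the normalized length measure along $s\mapsto z_{\L_\rho}(a_s,1,a_{2s},a_{2s})\KR$, taken over one period.

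I then identify this with $\nu^{joint}_{\L_\rho}$ as defined in \eqref{eq:def-of-msr}. Here one must check two things. First, the $y_\L$ conjugate equals the $y_{\L_\rho}$ conjugate, which holds because both equal $A_\CC\times\SO_2(\RR)\times A\times A_\CC$ intersected appropriately via the reference plane $\Lo$. Second, the period of the $s$-flow on this orbit in $\XX/\KR$ is $\ell_D$, i.e.\ exactly the length of the first-factor geodesic. By Lemma~\ref{lem:lengths} the other factors have period dividing this, so the joint orbit closes up after time $\ell_D$ and the uniform measure over $[0,\ell_D]$ is the invariant probability measure. Finally, $\nu^{joint}_{\L_\rho}$ depends only on the class $\SpinV(\ZZ)\L_\rho$. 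By Lemma~\ref{lem:disjoint-proj} distinct $\rho$ give distinct classes, since their first projections are already disjoint. So the sum over $\rho\in R_\L$ is the sum over $\J(\L)$, each term with weight $1/\abs{R_\L}$.

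The main obstacle is the periodicity bookkeeping. One has to show the joint orbit of $(a_s,1,a_{2s},a_{2s})$ has period exactly $\ell_D$. Equivalently, the lattice $\TT_\L(\RR)\cap y$-conjugate of the stabilizer, modulo $\KR$, is generated by the time-$\ell_D$ element. I would handle this by working in the first coordinate: the stabilizer of the point $\GG(\QQ)\rho(\KT)$ in $\TT_\L(\RR)$ is $\Theta_\L$ of the unit group $\HH_\L(\QQ)\cap\rho\KTf\rho^{-1}$. Since $\Theta_\L$ is injective on $\HH_\L$ modulo $\pm1$, the first coordinate determines the rest, and its period modulo $A^1_\CC$ is $\ell_D$ by definition.
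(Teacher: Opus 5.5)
Your proposal is correct and follows essentially the same route as the paper. The common steps are: equal mass on each piece $\orb_\rho$ by commutativity of the torus; Lemma~\ref{lem:choice-of-reps} to identify the base point $z_{\L_\rho}$; Lemma~\ref{lem:torus-real-compact} to reduce modulo $\KR$ to the $(a_s,1,a_{2s},a_{2s})$-orbit; and Lemma~\ref{lem:disjoint-proj} to see that distinct $\rho$ give distinct classes in $\J(\L)$.

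Your additional check goes slightly beyond the paper. You verify that the period of this orbit modulo $\KR$ is exactly $\ell_D$, either via Lemma~\ref{lem:lengths} (taking the speed-$2s$ flow into account) or via the graph structure of the stabilizer $\Theta_\L(\cdot)$. The paper leaves this step implicit.
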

    \begin{proof}
        The pushforward of $\mu_L$ is equal to the pushforward of the normalized Haar measure on $\GG(\QQ)\TT_\L(\Adeles)y_\L$ and we may as well study the latter.
        Using \Cref{lem:choice-of-reps}, we observe that for all $\rho\in R_\L$,  $\GG(\QQ)\rho y_\L$ is mapped to the starting point $z_{\L_{\rho}}$. 
        Further, the Haar probability measure on the orbit $\orb_\L = \bigsqcup_{\rho\in R_\L}\orb_{\rho}$ gives equal weights to each subset $\orb_{\rho}$ by commutativity of $\TT_\L(\Adeles)$. By \Cref{lem:disjoint-proj}, $\prK(\orb_{\L}) = \bigsqcup_{\rho\in R_\L}\prK(\orb_{\rho})$ is a disjoint union and by \Cref{lem:torus-real-compact} we have
        \begin{align*}
             \widetilde{\pr}(\orb_\rho) &= \widetilde{\pr}(\GG(\QQ)\rho(\KT)y_\L) = \widetilde{\pr}(\GG(\QQ)\rho y_\L (y_\L^{-1}\TT_\L(\RR)y_\L\times\KTf)) \\
             &= z_{\L_{\rho}} y_\L^{-1}\TT_\L(\RR)y_\L\KR= z_{\L_{\rho}} y_\L^{-1}\TT^{pt}_\L(\RR)y_\L\KR\\
             &= \{z_{\L_{\rho}}(a_s,1,a_{2s},a_{2s})\KR\st s\in\RR\}.
        \end{align*}
        It follows that the pushforward of the Haar measure restricted to $\orb_\rho$ is equal to $\tfrac{1}{\abs{R_\L}}\nu^{joint}_{\L_{\rho}}$. Summing up, we obtain that the pushforward of $\mu_\L$ is given by 
        \begin{align*}
            \frac{1}{\abs{R_\L}}\sum_{\SpinV(\ZZ)\L'\in \J(\L)}\nu^{joint}_{\L'}
        \end{align*} as claimed.
    \end{proof}
    
\begin{proof}[Proof of \Cref{thm:equi} assuming \Cref{thm:dynamical-version}]
        By \Cref{prop:Projection} and \Cref{thm:dynamical-version}, we know that for a $p$-admissible sequence $(\L_n)_n$ of oriented planes the pushforward measures of $\mu_{\L_n}$ given by
		\begin{align*}
			\frac{1}{\abs{R_{\L_n}}}\sum_{\SpinV(\ZZ)\L\in \J(\L_n)}\nu^{joint}_\L
		\end{align*}
        converge to the Haar measure $\msr$ of $\Tan(\Y_{\QQi})\times\Y\times\X\times\Tan(\Y_{\QQi})$. 
        Recall that $\J_D$ as defined in \eqref{eq:Def-of-JD} is a finite set and we can decompose $\J_\D$ into a finite disjoint union of collections,
		$\J_\D = \bigsqcup_{j=1}^{M_D} \J(\L_j)$.
        Thus, $\nu^{joint}_D$ is a finite convex combination of the above measures which individually equidistribute to the Haar measure $\msr$ as $D \to \infty$. The theorem follows.
	\end{proof}


\section{Proof of the Dynamical Theorem \texorpdfstring{\labelcref{thm:dynamical-version}}{thm}}\label{sec:proof-of-dyn-thm}

Let $(\L_n)_n$ be a $p$-admissible sequence of oriented rational planes.
Let $D_n = \disc(L_n)$ and, by passing to a subsequence, assume either that $D_n$ is a non-zero square mod $p$ for all $n$ or that $-D_n$ is a non-zero square mod $p$ for all $n$.
It suffices to show that any convergent subsequence of the sequence of associated measures $\mu_{\L_n}$ converges to the Haar probability measure on $\XX_p$.
To simplify notation, assume that $\mu_{\L_n}$ converges to some measure $\mu$.
We will show that $\mu$ is the Haar probability measure on $\XX_p$.
	
\subsection{Equidistribution in each factor}\label{subsec:EquiEachFac}

    In this section, we discuss the following first step towards \Cref{thm:dynamical-version} mentioned above.

    \begin{proposition}\label{prop:indiv-equi}
    The measure $\mu$ on $\XX_p$ is a joining of the Haar measures on the quotients $\XX_{p,j}$ for the action given by the subgroup $\{(a_s,e,a_{2s},a_{2s})\colon s\in\RR\}$.
	In particular, $\mu$ is a probability measure.  
	\end{proposition}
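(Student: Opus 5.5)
Invariance is the easy part. Each $\mu_{\L_n}$ is the pushforward of the Haar measure on $\GG(\QQ)\TT_{\L_n}(\Adeles)y_{\L_n}$, and this orbit is invariant under $y_{\L_n}^{-1}\TT_{\L_n}(\RR)y_{\L_n}$. By \Cref{lem:torus-real-compact}\eqref{eq:torus-real-compact-3}, that group contains $\{(a_s,e,a_{2s},a_{2s})\}$ for every $n$. Hence every $\mu_{\L_n}$, and therefore the weak-* limit $\mu$, is invariant under this one-parameter subgroup. What remains is to show that the pushforward of $\mu$ to each factor $\XX_{p,j}$ is the Haar probability measure. In particular this rules out escape of mass, since each factor's marginal would then be a probability measure. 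So the plan reduces to an equidistribution statement for each projected sequence $(\pi_j)_*\mu_{\L_n}$ separately.

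Fix $j$. The projection of $\GG(\QQ)\TT_{\L_n}(\Adeles)y_{\L_n}$ to $\XX_{\Adeles,j}$ is a shifted adelic torus orbit of the torus $\pi_j(\TT_{\L_n})$ in $\GG_j$. Concretely:
\begin{enumerate}
\item For $j=1$ it is $\HH_{\L_n}$, a maximal torus of $\Res{\QQi}(\SL_2)$ split by $\QQ(i,\sqrt{D_n})$ (via \Cref{rem:special-ClGr} and \Cref{cor:stabilizer-subgroup}).
\item For $j=2,3$ it is $\SO_{\q_{\L_n}}$ and $\SO_{\q_{\oL_n}}$, the tori attached to $\QQ(\sqrt{-D_n})$ and $\QQ(\sqrt{D_n})$.
\item For $j=4$ it is $\HH_{q_{\Lambda_{\L_n}}}$, the norm-one torus of $\QQ(i,\sqrt{D_n})/\QQi$.
\end{enumerate}
For $j=2,3$ I would invoke Duke's theorem in its adelic form. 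A technical point is that $\theta_{\L_n}$ might not surject onto $\SO_{\q_{\L_n}}(\Adeles)$, so the projected orbit may be a union of translates of a finite-index subgroup orbit (a genus-type subset). Equidistribution should still follow from Duke's subconvexity input, twisted by class group characters of bounded order (genus characters), or from the Linnik condition at $p$ together with Einsiedler--Lindenstrauss--Michel--Venkatesh. Since we have the splitting at $p$, the simplest route for $j=2$ or $j=3$ is this: $\Leg{\pm D_n}{p}=1$ makes one of the two tori split at $p$, so the corresponding factor is handled by Linnik's ergodic method / Einsiedler--Lindenstrauss--Michel--Venkatesh with the $\QQp$-diagonal action. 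The other factor, and both Bianchi factors, I would handle via the subconvexity version, i.e.\ the variant of Duke's theorem referred to as \Cref{thm:Duke}.

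For $j=1,4$ the tori live over $\QQi$. Here the measure is a packet of periodic geodesics on the Bianchi orbifold for the relative quadratic extension $\QQ(i,\sqrt{D_n})/\QQi$, whose relative discriminant tends to infinity by \Cref{rem:sqfree}. I would apply the number-field version of Duke's theorem: via Waldspurger's formula, the Weyl sums against Hecke–Maass forms on $\SL_2(\ZZi)\backslash\SL_2(\CC)$ are controlled by central values $L(1/2,\pi\otimes\chi)$ with $\chi$ a class group character of the quadratic extension. Subconvexity over $\QQi$ (Michel--Venkatesh) then gives a power saving against the volume of the packet, which is roughly $D_n^{1/2\pm\epsilon}$ by Siegel's bound. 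The constant function and Eisenstein series contributions are handled separately: the latter via Hecke $L$-functions of $\QQ(i,\sqrt{D_n})$, the former trivially. Finally, I would pass from $\XX_{p,j}$ to the real quotients. By strong approximation and the class-number-one property, equidistribution in $\XX_{\Adeles,j}$ is equivalent to equidistribution of the $\KGf$-invariant measures, and the $p$-adic extension is an intermediate quotient with the same invariance, so the real statements suffice.

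I expect the main obstacle to be this last analytic step over $\QQi$. One has to match the geometric packets exactly with the adelic torus orbits, including the image of $\TT_{\L_n}(\Adeles)$ under the projection being only of finite index and containing the extra unit/orientation twists. One also has to check that the relevant Waldspurger period formula and subconvexity bounds apply uniformly to all twisting characters that occur. I would try first to reduce to the full adelic torus orbit by noting that the index is bounded by a power of $2$ (coming from $\KK^\times/(\KK^\times)^2$ in \eqref{eq:spin-exact-seq}), and then average over the finitely many genus characters.
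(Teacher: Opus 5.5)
Your overall plan is the paper's (invariance from Lemma~\ref{lem:torus-real-compact}, then each marginal via the subconvexity variant of Duke's theorem, Theorem~\ref{thm:Duke}), and your invariance argument is correct. However, the step that carries the arithmetic content is missing, and the justification you give for it is wrong. The index you need to control is not bounded by a fixed power of $2$ coming from $\KK^\times/(\KK^\times)^2$. That quotient only bounds the index at a single place. What enters the Weyl-sum argument is the index in the torus class group, including the passage from the norm-one torus in $\SL_2$ to $K^\times/F^\times$ in $\mathbf{PB}^\times$, where Waldspurger's formula lives. The images of $\theta_{L}$, $\theta_{L^\perp}$, $\theta_{\Lambda_L}$ are only guaranteed to contain the squares. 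Moreover, even the full $\HH_{L_n}$-orbit in the first factor sees only the squares of $\mathfrak{cl}_{\QQ(i,\sqrt{D_n})}$ (Lemma~\ref{lem:relationtoclassgroups}). So this index is governed by the $2$- and $4$-torsion of $\mathfrak{cl}_{\QQ(\sqrt{\pm D_n})}$ and $\mathfrak{cl}_{\QQ(i,\sqrt{D_n})}$. That torsion is not bounded independently of $D_n$; it can grow with the number of prime factors of $D_n$. Averaging over twisting characters therefore only closes the argument once you show their number is $D_n^{o(1)}$, i.e.\ below the fixed power saving from subconvexity. This is where genus theory over $\QQ$ and Halter-Koch's $2$-rank bound for the biquadratic field $\QQ(i,\sqrt{D_n})$ enter. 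That is exactly what Lemmas~\ref{lem:relationtoclassgroups}, \ref{lem:large-packets} and~\ref{lem:image-of-third-morphism} supply in the paper, to verify the volume hypothesis~\eqref{eq:voldisc}. Your appeal to Siegel's bound alone does not give this hypothesis for the $\SL_2$-packets.

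Second, your final reduction to the real quotients is wrong and would lose the statement. The measures $\mu_{L_n}$ are invariant under $\TT_{L_n}(\QQ_p)$ but not under $\GG_j(\ZZ_p)$. Hence their marginals on $\XX_{p,j}$ are not determined by their images in $\XX_j$. Equidistribution at level one in $\XX_j$ does not imply equidistribution in $\XX_{p,j}$, which requires all $p$-power levels. Simply drop this step. Theorem~\ref{thm:Duke} is an adelic statement (the Weyl sums are taken against automorphic forms of arbitrary level), so apply it in $\GG_j(\QQ)\backslash\GG_j(\Adeles)$ and push forward to $\XX_{p,j}$, as the paper does. Your Linnik-type alternative for whichever of $j=2,3$ splits at $p$ does work directly on $\XX_{p,j}$. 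But it likewise needs the $D_n^{o(1)}$ index bound to force maximal entropy.
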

	
	This \namecref{prop:indiv-equi} should be seen as multiple instances of Duke's theorem \cite{duke_hyperbolic_1988} and its generalizations in various aspects \textemdash~see for instance \cite[\S4]{einsiedler_distribution_2011} for a general discussion and references.
    However, note that the above proposition in each factor is an equidistribution result for subcollections of the usual collections (of CM points, periodic geodesics, etc.) considered e.g.~by Linnik and Duke; see for example the works by Harcos, Michel~\cite{harcos-michel} or M.A.~and Einsiedler~\cite{AE-subcollections}.
    Below we show that these subcollections are large; this is a crucial ingredient for Proposition~\ref{prop:indiv-equi}.

    Let us make the connection to class groups explicit. 

    \begin{lemma}\label{lem:relationtoclassgroups}
    Let $\L\in \P_D$ for $D>0$ square-free and let $K =\QQ(i,\sqrt{D})$.
    Then the finite group 
    \begin{align}\label{eq:class-group}
        \dquot{\HH_\L(\QQ)}{\HH_\L(\Adeles_f)}{\HH_\L(\ZZhat)}
    \end{align}
    factors onto the group of squares in the class group $\mathfrak{cl}_K$ of~$K$. Moreover, the kernel of the factor map is bounded in size independently of $D$.
    \end{lemma}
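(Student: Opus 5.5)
The plan is to identify $\HH_\L$ explicitly with a norm-one torus for the quadratic extension $K/\QQi$. We then compare its class set with $\mathfrak{cl}_K$ using Hilbert's Theorem~90.

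\emph{Step 1: identify the torus.} Under \eqref{eq:spin-iso-sl}, $\HH_\L$ is the centralizer in $\Res{\QQi}(\SL_2)$ of the Klein vector $\Klein(\L)$ (Proposition~\ref{prop:Klein-map} and the discussion after it). Since $D$ is odd and square-free, $\prKlein(\L)=\Klein(\L)/2$, and it satisfies $\prKlein(\L)^2=-D$ (up to the sign conventions of Lemma~\ref{lem:length-of-Klein-vector}). Hence the centralizer of $\prKlein(\L)$ in $\Mat[2](\QQi)$ is the commutative subalgebra $\QQi[\prKlein(\L)]\simeq K=\QQ(i,\sqrt{D})$. We obtain
\[
\HH_\L\simeq \Res{\QQi}\big(\Res^{1}_{K/\QQi}\mathbb{G}_m\big),\qquad \HH_\L(\QQ)=K^1:=\{x\in K^\times:\Norm_{K/\QQi}(x)=1\}.
\]
For the integral structure, $\HH_\L(\ZZhat)$ is the group of norm-one units of the local completions of the order $\mathcal O:=\Xi$-conjugate of $\Mat[2]$-order intersected with $K$ (Proposition~\ref{prop:iso-cliff}). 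This order contains $\ZZi[\prKlein(\L)]\simeq\ZZi[\sqrt{-D}]$. Because $D$ is odd and square-free, this order is maximal away from the primes above $2$. Therefore $\HH_\L(\ZZhat)$ and $\widehat{\mathcal O}_K^{1}$ agree at all odd places and are commensurable at $2$ with index bounded independently of $D$. Consequently it suffices to prove the lemma with $\HH_\L(\ZZhat)$ replaced by $\widehat{\mathcal O}_K^{1}$, at the cost of a bounded change in the kernel.

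\emph{Step 2: construct the map.} Send a finite idele $x\in \Adeles_{K,f}^1$ to the ideal class $[x]\in\mathfrak{cl}_K$ of the ideal it generates. This is trivial on $K^1$ and on $\widehat{\mathcal O}_K^1$, so it descends to the double quotient. By Hilbert 90, applied locally and then assembled using that almost all components are units, we can write $x=y/\sigma(y)$ with $y\in\Adeles_{K,f}^\times$, where $\sigma$ generates $\mathrm{Gal}(K/\QQi)$. Then
\[
[x]=[y][\sigma y]^{-1}=[y]^2[y\sigma(y)]^{-1}=[y]^2\big[\Norm_{K/\QQi}(y)\mathcal O_K\big]^{-1}=[y]^2,
\]
since $\ZZi$ is a PID, so ideals extended from $\QQi$ are principal. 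Thus the image consists of squares, and every square $[y]^2$ is attained by taking $x=y/\sigma(y)$.

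\emph{Step 3: bound the kernel.} If $[x]$ is trivial, write $x=ku$ with $k\in K^\times$ and $u\in\widehat{\mathcal O}_K^\times$. Taking norms gives $\Norm(k)\Norm(u)=1$. Hence $\varepsilon:=\Norm(k)\in\QQi^\times\cap\widehat{\ZZi}^\times=\ZZi^\times=\{\pm1,\pm i\}$. Two such decompositions with the same $\varepsilon$ differ, for $k$, by an element of $K^1$ and, for $u$, by an element of $\widehat{\mathcal O}_K^1$. So the class of $x$ is determined by $\varepsilon$, and the kernel has at most $4$ elements. Combined with the bounded-index comparison at $2$ from Step 1, this gives a uniform bound on the kernel.

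The main obstacle is Step 1. One must carefully match $\HH_\L(\ZZhat)$, defined via $\SpinV(\ZZ_p)$ and the conjugation by $\Xi_1$, with the norm-one units of $\mathcal O_K$, and in particular control the local situation at $2$ uniformly in $D$. I would do this concretely in the $\SL_2(\QQi)$ model. The plan is to show that the stabilizer of $\prKlein(\L)$ in $\Xi^{-1}\SL_2(\ZZ_2[i])\Xi$ contains, with bounded index, the norm-one units of $\ZZ_2[i][\prKlein(\L)]$, and to bound $[\mathcal O_{K,2}^\times:(\ZZ_2[i][\sqrt{-D}])^\times]$ by a constant using $D$ odd.
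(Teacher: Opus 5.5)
Your proposal is correct and follows essentially the same route as the paper. Like the paper, you identify $\HH_\L$ with the norm-one torus of $K/\QQ(i)$ via $\prKlein(\L)$ and compare the order $\mathcal{O}\supseteq\ZZ[i][\sqrt{-D}]$ with $\mathcal{O}_K$ (bounded index, maximal at odd primes). You then map to $\mathfrak{cl}_K$, using Hilbert~90 and class number one of $\QQ(i)$ for the image and the relative norm into $\ZZ[i]^\times$ for the kernel. The step you flag as the main obstacle is handled in the paper in two lines. It observes that $\prKlein(\L)$ lies in the order \eqref{eq:basis-order}, which is contained in $\Xi_1^{-1}\Mat[2](\ZZ[i])\Xi_1$. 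Hence $\HH_\L(\ZZ_2)$ contains the norm-one units of $\ZZ_2[i][\sqrt{-D}]$ outright, not just up to bounded index, and it lies in $\mathcal{O}_{K,2}^1$ automatically because $\mathcal{O}\subseteq\mathcal{O}_K$; so the relation is containment rather than mere commensurability.
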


    \begin{proof}
    We first claim that
    \begin{align}\label{eq:stab-as-alg}
        \HH_\L \simeq \{\alpha\in\Res{K}(\GG_{m,K}) \st \Norm_{K/\QQi}(\alpha) = 1 \}.
    \end{align}
    For the claim, consider the $\QQi$-linear embedding $\iota\colon K=\QQ(i,\sqrt{-D}) \to \Mat[2](\QQi)$ given by $\iota(\sqrt{-D}) = \prKlein(\L)\in W\subseteq \Mat[2](\QQi)$. 
    The embedding $\iota$ realizes the algebraic torus $\Res{K}(\GG_{m,K})$ as a subgroup  of $\Res{\QQi}(\GL_2)$. 
    Notice that under $\iota$ the relative norm $\Norm_{K/\QQi}$ corresponds to $\QF$, that is, the determinant in $\Mat[2](\QQi)$. 
    Since $\HH_\L \subseteq \Res{\QQi}(\SL_2)$ is the centralizer of $\prKlein(\L)$, or equivalently the centralizer of $\iota(K)$, this proves \eqref{eq:stab-as-alg}.

    Recall that the integral structure on $\HH_\L$ is inherited from the integral structure on $\SpinV$ (the latter is given through the standard lattice in $\V$).
    Similarly to \Cref{prop:iso-cliff}, one shows that $\HH_\L(\ZZ_p) = \HH_\L(\QQ_p) \cap \Xi_1^{-1}\Mat[2](\ZZ[i]\otimes \ZZ_p)\Xi_1$ for every prime $p$.
    Define the order $\mathcal{O} = \iota^{-1}(\Xi_1^{-1}\Mat[2](\ZZ[i])\Xi_1)$ in $K$. By definition, $\ZZ[i] \subset \mathcal{O}\subset \mathcal{O}_K$ where $\mathcal{O}_K$ is the ring of integers of $K$. 
    
    As was shown e.g.~at the beginning of the proof of \Cref{prop:iso-cliff} we have
    \begin{align*}
        \prKlein(\L) \in
            \Big\{ \begin{pmatrix}
            x_1 & x_2 \\ x_3 & x_4
            \end{pmatrix}: x_1,x_2,x_3,x_4 \in \ZZ[i],\ x_1+x_4,x_2+x_3 \in 2\ZZ[i]\Big\}.
    \end{align*}
    The latter lattice is contained in $\Xi_1^{-1}\Mat[2](\ZZ[i])\Xi_1$ and therefore $\ZZ[i,\sqrt{D}] \subset \mathcal{O}$. In particular, the index $[\mathcal{O}_K:\mathcal{O}]$ is absolutely bounded and for any prime $p \neq 2$ the local order $\mathcal{O}\otimes \ZZ_p$ is maximal i.e.~$\mathcal{O}\otimes \ZZ_p = \mathcal{O}_K \otimes \ZZ_p$. 
    We let $\widehat{\mathcal{O}} = \mathcal{O}\otimes \widehat{\ZZ}$ and denote by $K^1, \Adeles_K^1, \widehat{\mathcal{O}}^1$ the respective kernels under the relative norm $\Norm_{K/\QQ(i)}$.
    By the definition of $\iota$, it gives an isomorphism
    \begin{align*}
     \dquot{K^{1}}{\Adeles_{K,f}^{1}}{\widehat{\mathcal{O}}^{1}}
     \xlongrightarrow[]{\iota}
     \dquot{\HH_\L(\QQ)}{\HH_\L(\Adeles_f)}{\HH_\L(\ZZhat)},
    \end{align*}
    so it is enough to prove the lemma for the finite group $K^{1} \backslash\Adeles_{K,f}^{1}/\widehat{\mathcal{O}}^{1}$.
    Moreover, as $\mathcal{O} \subset \mathcal{O}_K$ (with absolutely bounded index), the kernel of the natural map
    \begin{align*}
    \dquot{K^{1}}{\Adeles_{K,f}^{1}}{\widehat{\mathcal{O}}^{1}} \to \dquot{K^{1}}{\Adeles_{K,f}^{1}}{\widehat{\mathcal{O}_K}^{1}}
    \end{align*}
    is absolutely bounded in size. Thus, it is enough to consider $K^{1} \backslash\Adeles_{K,f}^{1}/\widehat{\mathcal{O}_K}^{1}$.

    It is a standard consequence of $\mathcal{O}_K$-ideals being locally principal that the quotient $K^{\times} \backslash\Adeles_{K,f}^{\times}/\widehat{\mathcal{O}_K}^{\times}$ of the idele class group of $K$ can be identified with the class group $\mathfrak{cl}_K$ of $K$.
    Note that there is a natural map
    \begin{align*}
    \phi: K^{1} \backslash\Adeles_{K,f}^{1}/\widehat{\mathcal{O}_K}^{1} \to K^{\times} \backslash\Adeles_{K,f}^{\times}/\widehat{\mathcal{O}_K}^{\times}.
    \end{align*}
    The kernel $\mathcal{F}$ is isomorphic to the set of pairs $(t_1,t_2) \in K^\times \times \widehat{\mathcal{O}_K}^{\times}$ with the property $\Norm_{K/\QQ(i)}(t_1t_2)=1$ and up to the action of $K^1 \times \widehat{\mathcal{O}_K}^{1}$.
    For any such $(t_1,t_2)$ we have $\Norm_{K/\QQ(i)}(t_1) = \Norm_{K/\QQ(i)}(t_2)^{-1} \in \QQ(i) \cap (\ZZ[i] \otimes \widehat{\ZZ})^{\times} = \ZZ[i]^\times$.
    Thus, the relative norm injects $\mathcal{F}$ into the group $\ZZ[i]^\times$ which is of order four. 
    
    It remains to show that the image of $\phi$ is the group of squares.
    First, if $s \in \Adeles_{K,f}^{\times}$ then $\frac{1}{\Norm_{K/\QQ(i)}(s)}s^2$ has relative norm $1$ and represents the same class as $s^2$ since $\QQ(i)$ has class number one.
    In particular, the image of $\phi$ contains the squares.
    Conversely, let $t \in \Adeles_{K,f}^{1}$ and for any finite place $v$ of $F=\QQ(i)$ denote the $v$-component of $t$ by $t_v$.
    We may assume $t_v$ is trivial at all but finitely many places $v$.
    If $K_v = K \otimes F_v$ is a field, then Hilbert's Theorem 90 implies that $t_v = \frac{s_v}{\sigma(s_v)}$ for some $s_v \in K_v$ where $\sigma$ is the non-trivial Galois automorphism of $K/F$.
    If $K_v \simeq F_v \times F_v$, then $t_v\simeq (t_{v,1},t_{v,2})$ where $t_{v,1}t_{v,2}=1$ and, thus, $t_v = \frac{s_v}{\sigma(s_v)}$ for $s_v \simeq (t_{v,1},1)$.
    We have thus written $t = \frac{s}{\sigma(s)}$ where $s = (s_v)_v$.
    Using again that $\QQ(i)$ has class number one, we obtain that $s^2 = \Norm_{K/\QQ(i)}(s)t$ represents the same class as $t$.
    This proves the lemma.
    \end{proof}  

    \begin{lemma}\label{lem:large-packets}
        Let $\L \in \P_D$ where $D>0$ is square-free. 
		The images of the maps $\theta_{\L}\colon\HH_\L(\KK)\to\HH_{\qL}(\KK)$ and $\theta_{\oL}\colon\HH_\L(\KK)\to\HH_{\qoL}(\KK)$ contain the set of squares for any field $\KK$ of characteristic $0$.
		In particular, the image of the induced homomorphism
		\begin{align*}
			\dquot{\HH_\L(\QQ)}{\HH_\L(\Adeles_f)}{\HH_\L(\ZZhat)} 
            &\longrightarrow \dquot{\HH_{\qL}(\QQ)}{\HH_{\qL}(\Adeles_f)}{\HH_{\qL}(\ZZhat)}
		\end{align*}
		has index at most $\D^{o(1)}$ and similarly for $q_{\L^\perp}$.
	\end{lemma}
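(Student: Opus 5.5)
Both claims come from the description of $\HH_\L(\KK)$ in \Cref{cor:stabilizer-subgroup}, and then a count of $2$-torsion in a class group.

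\textbf{Squares in the image.} By \Cref{cor:stabilizer-subgroup} every element of $\HH_\L(\KK)$ has the form $\beta\beta'$ with $\beta\in\sClGr_{\L}(\KK)$, $\beta'\in\sClGr_{\oL}(\KK)$ and $\ClNrm(\beta)\ClNrm(\beta')=1$. It acts on $\L$ by $x\mapsto \ClNrm(\beta')\beta^2x$, where $\ClNrm(\beta')\beta^2\in\Spin_\L(\KK)$. By \Cref{rem:special-ClGr}, $\sClGr_\L(\KK)=\Cle[(\L,\QL)]^\times$ is the multiplicative group of a quadratic $\KK$-algebra, and $\Spin_\L(\KK)$ is its norm-one subgroup. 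This group maps onto $\SO_{\qL}(\KK)$ via $\Adj$, with $u\in\Spin_\L(\KK)$ acting on $\L$ by multiplication by $u^2$. So $\theta_\L(\beta\beta')$ corresponds to $\ClNrm(\beta')\beta^2$.

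Now take any $u\in\Spin_\L(\KK)$ and set $\beta=u$, $\beta'=1$. Then $\ClNrm(\beta)\ClNrm(\beta')=1$, so $u\in\HH_\L(\KK)$, and it acts on $\L$ by $u^2$. Hence $\theta_\L(u)=\Adj_u|_\L$. The image of $\theta_\L$ therefore contains $\SO_{\qL}(\KK)$ restricted to the image of $\Spin_\L(\KK)$. Since $\Spin_\L\to\SO_{\qL}$ has cokernel in $\KK^\times/(\KK^\times)^2$ by \eqref{eq:spin-exact-seq}, this image contains all squares: if $g=\Adj_v|_\L$ with $v\in\sClGr_\L$, then $g^2=\Adj_{v^2/\ClNrm(v)}$ and $v^2/\ClNrm(v)\in\Spin_\L$. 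The same argument applies verbatim to $\oL$, using $\beta=1$ and $\beta'\in\Spin_{\oL}$.

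\textbf{Index bound.} The homomorphism on double cosets is induced by $\theta_\L\colon\HH_\L(\Adeles_f)\to\HH_{\qL}(\Adeles_f)$, which maps $\HH_\L(\ZZhat)$ into $\HH_{\qL}(\ZZhat)$ by the block form of $\BT{\L}$ over $\ZZhat$. Applying the squares statement locally at every $p$, the image contains the subgroup of squares of $C\defeq\HH_{\qL}(\QQ)\backslash\HH_{\qL}(\Adeles_f)/\HH_{\qL}(\ZZhat)$. For $g\in\HH_{\qL}(\ZZ_p)$ the preimage can be chosen in $\HH_\L(\ZZ_p)$ up to a bounded correction, which only matters at $p=2$ and $p\mid D$. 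It therefore suffices to bound $|C/C^2|=|C[2]|$.

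The group $C$ is the norm-one torus of the quadratic order of discriminant $-4D$ in $\QQ(\sqrt{-D})$, with the analogous statement for $\QQ(\sqrt{D})$ in the case of $\oL$. Arguing as in \Cref{lem:relationtoclassgroups}, $C$ maps to a (narrow) class group of a quadratic order with kernel and cokernel of bounded size. Genus theory bounds the $2$-torsion of such a class group by $2^{\omega(4D)}$. The divisor bound $2^{\omega(D)}=D^{o(1)}$ then gives index at most $D^{o(1)}$.

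\textbf{Main obstacle.} The delicate step is the integral bookkeeping: checking that local squares of integral points lift to integral points of $\HH_\L$, and identifying $C$ with a class group with bounded discrepancy. I would handle this exactly as in \Cref{lem:relationtoclassgroups}, accepting bounded factors from primes dividing $2D$. Since $\omega(2D)$ also contributes only $D^{o(1)}$, these factors are harmless.
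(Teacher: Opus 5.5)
Your proof is correct and follows the paper's route: squares are hit because $\Spin_{\L}\subseteq\HH_{\L}$ (take $\beta'=1$) and $\theta_{\L}$ restricted to it is the squaring map, so the image contains $C^2$; the index is therefore at most $|C/C^2|=|C[2]|$, which genus theory and the divisor bound make $D^{o(1)}$.

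One slip: arguing as in Lemma~\ref{lem:relationtoclassgroups}, the map from $C$ to the class group has bounded kernel but its image is the subgroup of squares, so its cokernel is \emph{not} bounded. This is harmless, because $|C[2]|\le|\ker|\cdot|\mathfrak{cl}[2]|$ only uses the kernel bound. Likewise, your ``bounded correction'' at $2$ and $p\mid D$ is unnecessary: $\Spin_{\L}\hookrightarrow\HH_{\L}$ is a $\QQ$-morphism, so the squaring argument works directly on $\Adeles_f$-points.
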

    
	\begin{proof}
        By definition of $\theta_\L$ in \eqref{eq:def-thetaL} together with \Cref{cor:stabilizer-subgroup}, we obtain that $\theta_\L$ is given by the map $\HH_\L(\KK) \to \HH_{\qL}(\KK) = \SO_{\qL}(\KK) \simeq \Spin_{\Q_{|L}}(\KK),~\beta\beta'\mapsto \ClNrm(\beta')\beta^2$. Clearly, if $u=v^2$ for $v\in\Spin_{\Q_{|L}}(\KK)$, then $v\in\HH_\L(\KK)$ maps to $u$, and thus $\theta_\L$ contains the set of squares for any field $\KK$ of characteristic $0$. The argument that the image of $\theta_{\oL}$ contains the set of squares is the same.

        As $D$ is square-free, the group $\HH_{\qL}(\QQ)\backslash\HH_{\qL}(\Adeles_f)/\HH_{\qL}(\ZZhat)$ is isomorphic to the set of squares in the class group $\mathfrak{cl}_{\QQ(\sqrt{-D})}$ \textemdash~see e.g.~\cite[\S7]{wieser_linniks_2019} which is similar to the proof of Lemma~\ref{lem:relationtoclassgroups} above.
        We also note that the $2$-torsion (and hence also the $4$-torsion) of $\mathfrak{cl}_{\QQ(\sqrt{-D})}$ is of size $\D^{o(1)}$ by standard divisor function estimates (see \cite[p.~342]{cassels2008rational}). 
        The $2$-torsion subgroup of the group $\HH_{\qL}(\QQ)\backslash\HH_{\qL}(\Adeles_f)/\HH_{\qL}(\ZZhat)$ is the $4$-torsion subgroup of $\mathfrak{cl}_{\QQ(\sqrt{-D})}$ and, in particular, of size $D^{o(1)}$.
        The argument for $\L^\perp$ is analogous and the lemma follows.
	\end{proof}

	\begin{lemma}\label{lem:image-of-third-morphism}
        Let $\L \in \P_D$ where $D>0$ is square-free and odd.
		The image of the map $\theta_{\oKLZ}\colon\HH_\L(\KK)\to\HH_{\qoKL}(\KK)$ is equal to the set of squares in $\HH_{\qoKL}(\KK)$ for any field $\KK$ of characteristic~$0$.
		In particular, the image of the induced homomorphism
        \begin{align*}
        \dquot{\HH_\L(\QQ)}{\HH_\L(\Adeles_f)}{\HH_\L(\ZZhat)} 
            &\longrightarrow \dquot{\HH_{\qoKL}(\QQ)}{\HH_{\qoKL}(\Adeles_f)}{\HH_{\qoKL}(\ZZhat)}
        \end{align*}
        has index at most $D^{o(1)}$.
	\end{lemma}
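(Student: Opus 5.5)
The plan is to compute $\theta_{\oKLZ}$ explicitly on $\HH_\L(\KK)$ through the isomorphism \eqref{eq:stab-as-alg} from the proof of \Cref{lem:relationtoclassgroups}. Write $E = \KK\otimes_\QQ\QQi$, which is a quadratic étale $\KK$-algebra, and view the quaternion algebra $\Cle\otimes\KK\simeq\Mat[2](E)$. The group $\HH_\L(\KK)$ is the set of norm-one elements $h$ of the commutative algebra $E[\prKlein(\L)]\simeq E[x]/(x^2+D)$. The group $\HH_{\qoKL}(\KK)$ is the $E$-points of $\SO$ of the binary form $\QF|_{\oKL}$. This plane $\oKL$ is exactly the set of traceless elements anticommuting with $\prKlein(\L)$. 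Indeed, \Cref{lem:compl of Klein} and the proof of \Cref{lem:disc-of-orth} give $\oKL=\{w\in\W : \prKlein(\L)w=-w\prKlein(\L)\}$.

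First I describe the action. Let $\sigma$ be the nontrivial automorphism of $E[\prKlein(\L)]$ over $E$. For $w\in\oKL$ and $h$ in that commutative algebra, anticommutation gives $hw=w\sigma(h)$. So if $\ClNrm(h)=h\sigma(h)=1$, then
\[ \Adj_h(w)=hwh^{-1}=hw\sigma(h)=h^2w. \]
Thus $\oKL$ is a free rank-one module over $E[\prKlein(\L)]$ via left multiplication, and $h$ acts by multiplication by $h^2$. Next I identify $\HH_{\qoKL}(\KK)$ with the norm-one group of $E[\prKlein(\L)]$ acting by left multiplication on $\oKL$. To do this, fix $w_0\in\oKL$ with $\QF(w_0)\neq0$. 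Then $\QF(uw_0)=\ClNrm(u)\QF(w_0)$, because $\ClNrm$ is multiplicative on $\Cle$. So the map $u\mapsto uw_0$ intertwines $\QF$ with a multiple of the norm form of the rank-two $E$-algebra. The special orthogonal group of a binary norm form is exactly the norm-one torus acting by multiplication. This is the standard fact, and it also covers the split case, where one uses $E\times E$ coordinates. Putting these together, $\theta_{\oKLZ}$ is identified with the squaring map $h\mapsto h^2$ on the same torus. Its image is therefore exactly the set of squares in $\HH_{\qoKL}(\KK)$, which proves the first claim.

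For the adelic statement, I apply this with $\KK=\QQ_q$ and with $\KK=\QQ$. Both source and target double quotients are then identified, via \Cref{lem:relationtoclassgroups}, with quotients of $K^1\backslash\Adeles^1_{K,f}/\widehat{\mathcal O}^1$ for $K=\QQ(i,\sqrt{-D})$. The integral structures must match. Here I use that $\prKlein(\L)$ is primitive, by \Cref{lem:prim-KL}, since $D$ is odd and square-free. The relevant orders, $\iota^{-1}$ of the conjugated $\Mat[2](\ZZi)$ on the source side and the stabilizer of $\Lambda_\L$ on the target side, both contain $\ZZ[i,\sqrt{-D}]$. Hence they differ from $\mathcal O_K$ only at $2$, with bounded index. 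Under these identifications the induced map becomes squaring, up to kernels and cokernels of size bounded independently of $D$. The index of the image of squaring in a finite abelian group equals the size of its $2$-torsion. By \Cref{lem:relationtoclassgroups}, the $2$-torsion of the relevant group is controlled by the $4$-torsion of $\mathfrak{cl}_K$, up to bounded factors. Genus theory over $\QQi$ gives $\#\mathfrak{cl}_K[2]\ll 2^{\omega(D)}$. The $4$-torsion is bounded by the $2$-rank as well. All of this is $D^{o(1)}$, by the divisor bound exactly as in \Cref{lem:large-packets}.

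I expect the main obstacle to be the integral bookkeeping in this last step. Specifically, I need to check that $\theta_{\oKLZ}$ maps $\HH_\L(\ZZ_q)$ into $\HH_{\qoKL}(\ZZ_q)$. I also need the two orders to agree with $\mathcal O_K\otimes\ZZ_q$ for odd $q$, so that only the prime $2$ contributes a bounded discrepancy. For odd $q$, the order $\ZZ_q[i,\sqrt{-D}]$ is maximal because $D$ is square-free and odd, which should settle it.
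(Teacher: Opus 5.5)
Your proposal is correct and is essentially the paper's argument. The paper likewise shows that $h\in\HH_\L(\KK)$ acts on $\oKL$ by left multiplication with $h^2$, which it phrases via the decomposition $\beta\beta'$ of Corollary~\ref{cor:stabilizer-subgroup} and $\oKL=\L\otimes\oL$ from Lemma~\ref{lem:compl of Klein} where you use the commutative algebra generated by $\prKlein(\L)$ and anticommutation. It then identifies $\HH_{\qoKL}$ with the norm-one torus acting by multiplication, and bounds the index of the squares by the $2$-power torsion of $\mathfrak{cl}_K$, $K=\QQ(i,\sqrt{D})$, citing Halter-Koch/Gerth where you use genus theory.

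One small slip: each prime $p\equiv 1 \bmod 4$ dividing $D$ gives two ramified primes of $\QQi$ in $K$. So genus theory yields $\#\mathfrak{cl}_K[2]\le 4^{\omega(D)}$ rather than $\ll 2^{\omega(D)}$, which is still $D^{o(1)}$.
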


	\begin{proof}
	By Lemma~\ref{lem:compl of Klein}, any element of $\orth{\Klein(\L)}$ is a finite sum of elements of the form $xx'$ for $x\in\L(\KK)$ and $x'\in\oL(\KK)$.
    Moreover, any element of $\HH_\L(\KK)$ is of the form $\beta\beta'$, with $\beta\in\sClGr_\L(\KK)$ and $\beta'\in\sClGr_{\oL}(\KK)$, and its conjugation action on $\orth{\Klein(\L)}$ is given by $\beta\beta'xx'(\beta\beta')^{-1} = (\beta\beta')^2xx'$, by \Cref{cor:stabilizer-subgroup}. 
    Now notice that $\Spin_{{\QF}_{|\orth{\Klein(\L)}}}(\KK) \simeq \HH_{\qoKL}(\KK) = \SO_{\qoKL}(\KK) ,~u\mapsto (x \mapsto ux)$, is an isomorphism.
    Since the map $\theta_{\oKLZ}$ is precisely the action of $\HH_\L$ on $\orth{\Klein(\L)}$ in a choice of basis, the above shows that the image of $\theta_{\oKLZ}\colon\HH_\L(\KK)\to\HH_{\qoKL}(\KK)$ is the set of squares.

    Since $D$ is square-free, one may show as in Lemma~\ref{lem:relationtoclassgroups} that the finite abelian group $\tdquot{\HH_{\qoKL}(\QQ)}{\HH_{\qoKL}(\Adeles_f)}{\HH_{\qoKL}(\ZZhat)}$ factors onto the set of squares in the class group $\mathfrak{cl}_K$ of the biquadratic field $K =\QQ(i,\sqrt{D})$. 
    Also, the kernel is bounded in size independently of $D$.
    The $2$-torsion (and hence also the $4$-torsion) of $\mathfrak{cl}_{K}$ is bounded by $\D^{o(1)}$ by \cite[\S1]{halter-koch_satz_1972} \textemdash~see also \cite[(3.5)]{Gerth}. 
    This proves the lemma.
	\end{proof}

\begin{theorem}\label{thm:Duke}
Let $F$ be a number field and let $\mathbf{S}$ be a form of $\SL_2$ over $F$.
Then there exists $\delta>0$ with the following property.

For every $n \geq 1$ suppose that $\mathbf{T}_n$ is a non-trivial $F$-torus, $\psi_n\colon \mathbf{T}_n \to \mathbf{S}$ is a non-trivial homomorphism, and $g_n \in \mathbf{S}(\mathbb{A})$.
Assume that 
\begin{align}\label{eq:voldisc}
\vol\big(\mathbf{S}(F)\psi_n(\mathbf{T}_n(\mathbb{A}))g_n\big) 
\geq \disc\big(\mathbf{S}(F)\psi_n(\mathbf{T}_n)(\mathbb{A})g_n\big)^{\frac{1}{2}-\delta}.
\end{align}
and that $\disc(\mathbf{S}(F)\psi_n(\mathbf{T}_n)(\mathbb{A})g_n)\to \infty$.
Then $\mathbf{S}(F)\psi_n(\mathbf{T}_n(\mathbb{A}))g_n$ is equidistributed in $\mathbf{S}(F)\backslash \mathbf{S}(\mathbb{A})$ as $n\to \infty$.
\end{theorem}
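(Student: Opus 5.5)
The plan is to prove this through Weyl sums and periods: Waldspurger's formula together with subconvexity for $\mathrm{GL}_2$ over number fields (Michel--Venkatesh), in the form used by Einsiedler--Lindenstrauss--Michel--Venkatesh for Duke's theorem. I have not checked the exponent bookkeeping in the third step; I would only pin it down after doing the local computations.

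\emph{Reduction to a maximal torus.} A non-trivial homomorphism $\psi_n$ from an $F$-torus into the form $\mathbf{S}$ of $\SL_2$ has a one-dimensional image $\mathbf{T}'_n=\psi_n(\mathbf{T}_n)$. This image is a maximal $F$-torus of $\mathbf{S}$, so it is the norm-one torus $E_n^1$ of a quadratic étale algebra $E_n/F$ embedded in the quaternion algebra underlying $\mathbf{S}$. The orbit $\mathbf{S}(F)\psi_n(\mathbf{T}_n(\mathbb{A}))g_n$ is a union of translates of the subgroup $\psi_n(\mathbf{T}_n(\mathbb{A}))$ inside the full homogeneous toral orbit $\mathbf{S}(F)\mathbf{T}'_n(\mathbb{A})g_n$. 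Its normalized Haar measure is the average of the full toral measure twisted by the characters $\chi$ of the compact abelian group $\mathbf{T}'_n(F)\backslash\mathbf{T}'_n(\mathbb{A})$ that are trivial on $\psi_n(\mathbf{T}_n(\mathbb{A}))$. The number of such characters is the index, which is at most $\vol(\text{full orbit})/\vol(\text{sub-orbit})$. The full orbit has volume $\disc^{1/2+o(1)}$ by Siegel's lower bound (ineffective, or with the effective substitutes the paper allows). Hypothesis \eqref{eq:voldisc} therefore bounds the number of relevant characters by $\disc^{\delta+o(1)}$.

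\emph{Spectral expansion.} Fix a smooth test function $f$ on $\mathbf{S}(F)\backslash\mathbf{S}(\mathbb{A})$ that is right-invariant under a fixed compact open subgroup. I would expand $f$ into its constant part, its one-dimensional (character) part if $\mathbf{S}$ is split, its cuspidal part and its Eisenstein part.
\begin{itemize}
\item The constant part gives the main term.
\item One-dimensional automorphic characters factor through the reduced norm. They are controlled by a finite-index argument and become trivial on the orbit once $\disc\to\infty$.
\item For each cuspidal $\pi$ and each relevant $\chi$, Waldspurger's formula gives $|P_\chi(\pi(g_n)\varphi)|^2/\vol^2$ as the ratio $L(\tfrac12,\pi_{E_n}\otimes\chi)/L(1,\eta_{E_n})^2$ times local factors, all divided by the volume.
\item If $\mathbf{S}$ is split, the Eisenstein part is handled by Hecke's formula, with the product $L(\tfrac12,\chi)L(\tfrac12,\chi\eta)$ of Hecke $L$-functions of $E_n$ in place of the Rankin--Selberg value.
\end{itemize}

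\emph{Estimation.} Convexity gives $L(\tfrac12,\pi_{E_n}\otimes\chi)\ll\disc^{1/2+o(1)}$, with $\disc$ the discriminant of the shifted orbit including the contribution of $g_n$. This bound alone only makes each normalized period $O(\disc^{o(1)})$. Subconvexity, with some fixed saving $\disc^{-\eta}$ uniform in $\pi$ in a fixed compact range of the spectrum and in $\chi$, makes each twisted normalized period $O(\disc^{-\eta/2+o(1)})$. Summing over the at most $\disc^{\delta+o(1)}$ characters gives a total error of $\disc^{\delta-\eta/2+o(1)}$. So any $\delta<\eta/2$ works; this choice of $\delta$ depends only on $F$ and $\mathbf{S}$, as required. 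Uniformity in $\pi$ and decay of the spectral coefficients of $f$ (via Sobolev norms) then let me sum the spectral expansion.

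\emph{Main obstacle.} I expect the hardest part to be the local factors in Waldspurger's formula. They must be bounded uniformly at the ramified places of $E_n$ and at the places where the shift $g_n$ is non-trivial, including the archimedean place where $g_n$ may be unbounded. Concretely, I need the local periods of vectors fixed by a fixed level to be $\ll\disc_v^{-1/2+o(1)}$ relative to the local discriminant of the shifted orbit; this is what makes the notion of discriminant in \eqref{eq:voldisc} the right one. I would first try to import these bounds from Einsiedler--Lindenstrauss--Michel--Venkatesh (Duke's theorem for subcollections, via local volume computations) together with Michel--Venkatesh's subconvexity, rather than redo them.
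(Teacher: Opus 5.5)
Your outline follows the paper's argument in its main lines: Weyl's criterion, then expanding the sub-orbit measure in $O(D^{\delta+o(1)})$ characters of the full toral orbit via \eqref{eq:voldisc}, then Waldspurger's formula, Michel--Venkatesh subconvexity, and uniform bounds for the local toric integrals (the paper takes the last from Clozel--Ullmo). Your condition $\delta<\eta/2$, with $\eta$ the saving for the $L$-value, is the paper's $\delta<\eta$, with $\eta$ the saving for the period. However, the central step does not work as you stated it. Waldspurger's formula concerns forms on $\mathrm{GL}_2$ or $\mathbf{B}^\times$, integrated over $E^\times/F^\times$ against characters of $\mathbb{A}_E^\times/E^\times\mathbb{A}_F^\times$. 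It says nothing directly about a form on the $\SL_2$-form $\mathbf{S}$ integrated over $E_n^1$ against a character of $[E_n^1]$. For such $\pi$ and $\chi$, your ``$L(\tfrac12,\pi_{E_n}\otimes\chi)$'' is not what the formula computes.

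The paper supplies the missing step as follows.
\begin{itemize}
\item By Jacquet--Langlands and Labesse--Langlands, $\varphi_1$ is the restriction of a form $\varphi$ on $\mathbf{PB}^\times$. There the torus becomes $\mathbf{T}'_n\simeq E_n^\times/F^\times$.
\item Under the isomorphism $E^\times/F^\times\to E^1$, $x\mapsto x/\bar x$, the inclusion $E^1\to E^\times/F^\times$ becomes $t\mapsto t/\bar t=t^2$. So the $\mathbf{S}$-period equals $\int\varphi(t^2g_n)\,dt$, a period over the image of the squaring map.
\item This image is again a proper sub-orbit: at fixed level it is essentially the subgroup of squares in a class group. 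This happens even when $\psi_n$ is just the inclusion $E_n^1\hookrightarrow\mathbf{S}$; compare Lemma~\ref{lem:relationtoclassgroups}, where the norm-one class group maps onto the squares of $\mathfrak{cl}_K$.
\item You therefore have to expand its indicator in genus characters. These have order at most $2$, and there are $O_\varepsilon(D^\varepsilon)$ of them at a fixed level.
\item Only then do you apply Waldspurger, to every $\chi'$ in the group generated by the genus characters and your $D^{\delta+o(1)}$ characters, as in \eqref{decay2}.
\end{itemize}
This costs only $D^{\varepsilon}$, so your exponent bookkeeping survives. Without it, though, you have no formula to apply.

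Two smaller corrections. First, bounding the number of characters uses the elementary \emph{upper} bound $\mathrm{vol}\ll D^{1/2+o(1)}$ for the full orbit, not Siegel's bound. Siegel's lower bound is what you need to make the $L(1,\eta)^{-1}$ factors in Waldspurger's formula $D^{o(1)}$. Second, the ``one-dimensional part through the reduced norm'' is empty on $\mathbf{S}$: the reduced norm is $1$ there, and strong approximation leaves only constants. This is another sign that the computation belongs on $\mathbf{B}^\times$.
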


\begin{proof} 
    We sketch the principles of the proof which is certainly well known to the experts.
    
    By Weyl's equistribution criterion, it suffices to show that for any automorphic form $\varphi_1\in\pi_1\subset L^2([\mathbf{S}])$ which is not the constant function, one has
    \[ \int_{[\psi_n(\mathbf{T}_n)]}\varphi_1(tg_n)dt\to 0,\ n\to\infty. \]
    Here, $[\psi_n(\mathbf{T}_n)]$ denotes the quotient \[ [\psi_n(\mathbf{T}_n)]=\mathbf{S}(F)\psi_n(\mathbf{T}_n(\mathbb{A}))\subset \mathbf{S}(F)\backslash \mathbf{S}(\mathbb{A}). \]
    
    Let $\mathbf{PB}^\times$ be the group of projective units of the quaternion algebra associated with $\mathbf{S}$ and let $\mathbf{T}_n'\subset \mathbf{PB}^\times$ be the torus corresponding to $\psi_n(\mathbf{T}_n)$. 
    It follows from the work of Jacquet-Langlands and Labesse-Langlands \cite{JL,LL,Lab} that there exists an automorphic form 
    \[ \varphi\in \pi\subset L^2(\mathbf{PB}^\times(F)\backslash \mathbf{PB}^\times(\mathbb{A})) \] whose restriction (or pull-back) to $\mathbf{S}(\mathbb{A})$ is $\varphi_1$. 
    
    The above integral then equals
    \begin{equation}
    	\label{period1}
    	\int_{[\psi_n(\mathbf{T}_n)]}\varphi(t^2g_n)dt
    \end{equation}
    where (abusing notation)
   \begin{align*}
    [\psi_n(\mathbf{T}_n)]=\mathbf{PB}^\times(F)\psi_n(\mathbf{T}_n(\mathbb{A}))\subset [\TT_n']=\mathbf{PB}^\times(F)\mathbf{T}_n'(\mathbb{A})\subset \mathbf{PB}^\times(F)\backslash \mathbf{PB}^\times(\mathbb{A}).
   \end{align*}
    
    Let $D_n$ be the absolute value of the $F/\QQ$-norm of the discriminant of the quadratic order associated to the torus orbit $[\mathbf{T}_n']g_n$. Because of assumption \eqref{eq:voldisc}, the characteristic function of $[\psi_n(\mathbf{T}_n)]$ inside $[\mathbf{T}_n']$ can be expressed as a linear combination of a group of $O_{\delta'}(D_n^{\delta'})$ finite order characters  of $\mathbf{T}_n'(F)\backslash \mathbf{T}_n'(\mathbb{A})$ (whose coefficients are bounded by~$1$) for any $\delta'>\delta$; we are then reduced to proving that
    \begin{equation}\label{decay}
    	D_n^{\delta'}\max_{\chi} \Big|\int_{[\mathbf{T}_n']}\varphi(t^2g_n)\chi(t)dt \Big|\to 0
    \end{equation}
    where $\chi$ is varying over that group of characters.

    Likewise, the characteristic image of the morphism $t\mapsto t^2$ in $[\mathbf{T}_n']$ is a linear combination of characters of the genus (order at most $2$) of $[\mathbf{T}_n']$ which is of size $O_\varepsilon(D_n^\varepsilon)$ for any $\varepsilon>0$, so it suffices to prove that
    \begin{equation}\label{decay2}
    	D_n^{\delta'+\varepsilon}\max_{\chi'} \Big|\int_{[\mathbf{T}_n']}\varphi(tg_n)\chi'(t)dt \Big|\to 0
    \end{equation} 
    for $\chi'$ varying over the group generated by the characters appearing in \eqref{decay} and the genus 
    characters.
    
    The integral in \eqref{decay2} can be evaluated using Waldspurger's formula \cite{Walds}: for $\varphi$ a factorable vector in $\pi$ (which we may assume) one has
    \[ \Big|\int_{[\mathbf{T}_n']}\varphi(tg_n)\chi'(t)dt\Big|^2=D_n^{o(1)}\frac{L(1/2,\pi\otimes \theta_{\chi'})}{D_n^{1/2}}\prod_{v}I_v(\mathbf{T}_n',g_v,\varphi_v), \]
    where $L(s,\pi\otimes \theta_{\chi'})$ is the Rankin-Selberg $L$-function of $\pi$ and the theta series representation associated with $\chi'$ and  the factors $I_v(\mathbf{T}_n',(g_n)_v,\varphi_v)$ are local toric integrals:
    \[ I_v(\mathbf{T}_n',(g_n)_v,\varphi_v)=\int_{\mathbf{T}_n'(F_v)}\frac{\langle t.g_v.\varphi_v,g_v.\varphi_v\rangle}{\langle\varphi_v,\varphi_v\rangle}dt. \]
    
    Subconvex bounds for $L(1/2,\pi\otimes \theta_{\chi'})$ from  \cite{MV} and bounds for the local integrals $I_v(\mathbf{T}_n',(g_n)_v,\varphi_v)$ from \cite{CU} give that
    \[ \Big|\int_{[\mathbf{T}_n']}\varphi(tg_n)\chi'(t)dt\Big|\ll_{\varphi}D_n^{-\eta} \] for some absolute constant $\eta>0$. It follows that \eqref{decay} holds as soon as $\eta\geq \delta'+2\varepsilon$ which can be achieved for $\delta<\eta$ by taking $\varepsilon$ sufficiently small.

\end{proof}

    \begin{proof}[Proof of Proposition~\ref{prop:indiv-equi}]
        For $1\leq j \leq 4$ the pushforward of $\mu_{\L_n}$ to $\XX_{p,j}$ is equal to the pushforward of the normalized Haar measure on $\GG(\QQ)\TT_{\L_n}(\Adeles)y_{\L_n}$ to $\XX_{p,j}$ and so we may as well study the latter. Further, the diagram equipped with natural projections
        \[
            \begin{tikzcd}
                \XX_\Adeles \arrow[r] \arrow[d] & \XX_{\Adeles,j} \arrow[d] \\
                \XX_p \arrow[r]  &\XX_{p,j}
            \end{tikzcd}
        \]
        commutes and thus we may consider the projection $\XX_\Adeles \to \XX_{\Adeles,j}$ to the individual components first. For $j=1$, the projection of $\GG(\QQ)\TT_{\L_n}(\Adeles)y_{\L_n}$ to $\XX_{\Adeles,1}$ is equal to $\GG_1(\QQ)\HH_{\L_n}(\Adeles)k_{\L_n}$. By \Cref{lem:relationtoclassgroups}, this projection is given by the finitely many $k_{\L_n}^{-1}\HH_{\L_n}(\RR\times\ZZhat)k_{\L_n}$-orbits corresponding to the squares in the class group of $\QQ(i,\sqrt{D_n})$, where $D_n =\disc(\L_n)$. Since the set of squares has index $D_n^{o(1)}$, the pushforward of the normalized Haar measure on $\GG(\QQ)\TT_{\L_n}(\Adeles)y_{\L_n}$ to $\XX_{\Adeles,1}$ equidistributes as $n\to\infty$ by Theorem~\ref{thm:Duke}. 
        Thus, the projection of $\mu$ to $\XX_{p,1}$ is equal to the Haar measure. For $j=2$, the projection of the shifted adelic torus orbit $\GG(\QQ)\TT_{\L_n}(\Adeles)y_{\L_n}$ to $\XX_{\Adeles,2}$ is given by $\GG_2(\QQ)\theta_{\L_n}(\HH_{\L_n}(\Adeles))M_{\L_n}$. By \Cref{lem:large-packets}, this projection is given by the finitely many $M_{\L_n}^{-1}\theta_{\L_n}(\HH_{\L_n}(\RR\times\ZZhat))M_{\L_n}$-orbits containing the ones corresponding to the squares in the class group $\mathfrak{cl}_{\q_{\L_n}}$. The set of squares has index $D_n^{o(1)}$ and so we may argue as above. The analogous argument holds for $j=3$. For $j=4$ we use \Cref{lem:image-of-third-morphism} and argue as in the case $j=1$.
        By definition, the measures $\mu_{\L_n}$ are invariant under the action of the subgroup $\{(a_s,e,a_{2s},a_{2s})\colon s\in\RR\}$ and thus the limit $\mu$ is invariant under this action as well. 
    \end{proof}

\subsection{Proof of \texorpdfstring{\Cref{thm:dynamical-version}}{thm}}\label{sec:proof-dyn}

We first \enquote{decouple} the second factor $\XX_{p,2}$ from the other factors.
Let $\GG' = \GG_1\times\GG_3\times\GG_4$ and define the quotient $\XX_{p}'$ similarly.
Let $\mu'$ be the pushforward of $\mu$ to $\XX_p'$.

\begin{lemma}\label{lem:forget-second}
If $\mu'$ is the Haar probability measure on $\XX_p'$, then $\mu$ is the Haar probability measure on $\XX_p$.
\end{lemma}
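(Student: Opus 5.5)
The plan is a disintegration argument that uses only that the flow $a_s' \defeq (a_s,e,a_{2s},a_{2s})$ acts trivially on the second factor and ergodically on $\XX_p'$. Write $\XX_p = \XX_{p,2}\times\XX_p'$, with $\pi_2$ and $\pi'$ the two projections. By Proposition~\ref{prop:indiv-equi}, $\mu$ is an $a_s'$-invariant probability measure, and $(\pi_2)_*\mu = m_{\XX_{p,2}}$. Disintegrate $\mu$ over $\pi_2$:
\[ \mu = \int_{\XX_{p,2}} \delta_x\otimes\mu_x \,\mathrm{d}m_{\XX_{p,2}}(x), \]
where the $\mu_x$ are probability measures on $\XX_p'$. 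Since $a_s'$ acts trivially on $\XX_{p,2}$, invariance of $\mu$ and uniqueness of the disintegration give that, for every $s$, $\mu_x$ is $a_s'$-invariant for almost every $x$. Running this over a countable dense set of times and using continuity of the action, almost every $\mu_x$ is invariant under the whole one-parameter group. By hypothesis,
\[ \int \mu_x \,\mathrm{d}m_{\XX_{p,2}}(x) = \mu' = m_{\XX_p'} . \]

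The key input is that $m_{\XX_p'}$ is ergodic for the flow $\{(a_s,a_{2s},a_{2s})\}$ on $\XX_p'$. Granting this, the Haar measure is an extreme point of the convex set of $a'$-invariant probability measures. Therefore $\mu_x = m_{\XX_p'}$ for almost every $x$, and hence $\mu = m_{\XX_{p,2}}\otimes m_{\XX_p'} = m_{\XX_p}$, which is the claim.

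The ergodicity is the main obstacle, and I would prove it with Howe--Moore together with strong approximation. Let $f\in L^2_0(\XX_p')$ be $a'$-invariant. Each factor $\GG_j$ with $j\in\{1,3,4\}$ is simply connected, and its real points are non-compact, so by strong approximation $\GG_j(\ZZ[\tfrac1p])$ is an irreducible lattice in $\GG_j(\RR\times\QQ_p)$. The group $\GG'(\RR\times\QQ_p)$ is a product of almost simple real and $p$-adic groups, and the projection of $a'$ to each real simple factor $\GG_j(\RR)$ is unbounded. By Howe--Moore (equivalently the Mautner phenomenon), any $a'$-invariant vector is invariant under a non-compact simple factor, say $\GG_j(\RR)$. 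By density of $\GG_j(\ZZ[\tfrac1p])\GG_j(\RR)$ in $\GG_j(\RR\times\QQ_p)$, a function invariant under $\GG_j(\RR)$ is then invariant under all of $\GG_j(\RR\times\QQ_p)$, so $f$ factors through the remaining factors. Iterating over the remaining factors, on each of which the projected flow is still unbounded, shows that $f$ is constant, hence $f=0$.

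A little care is needed because Howe--Moore yields invariance under some simple factor rather than a prescribed one. I would handle this by applying it inductively to the subrepresentation orthogonal to the functions pulled back from proper subproducts. An alternative is to apply Mautner directly factor by factor, using that $a_s'$ is a product of diagonal flows.
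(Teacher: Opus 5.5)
Your proposal is correct and follows the paper's route. The paper views $\mu$ as a joining of the ergodic system $(\XX_p', m_{\XX_p'})$ under $\{(a_s,a_{2s},a_{2s})\}$ with the trivial action on $\XX_{p,2}$, and cites Lemma~\ref{lem:joinings-for-ergodic-and-trivial}; your disintegration-plus-extremality argument is a proof of that lemma. Of your two ergodicity arguments, the factor-by-factor Mautner one is the clean one (it gives invariance under all of $\GG'(\RR)$ at once, then strong approximation finishes), whereas ``invariant under some simple factor'' is not literally what Howe--Moore yields and needs the tensor-product decomposition of $L^2_0(\XX_p')$ you allude to.
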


    We use the following elementary and standard ergodic-theoretic lemma; for a proof see for example \cite[Lemma 7.2]{aka_equidistribution_2021}.

    \begin{lemma}\label{lem:joinings-for-ergodic-and-trivial}
        Let $\mathsf{Y}_1 = (Y_1,\mathcal{B}_1,T_1,\mu_1)$ and $\mathsf{Y}_2 = (Y_2,\mathcal{B}_2,T_2,\mu_2)$ be two measure-preserving dynamical systems. Suppose that $\mathsf{Y}_1$ is ergodic and that $\mathsf{Y}_2$ is trivial. Then the only joining of $\mathsf{Y}_1$ and $\mathsf{Y}_2$ is the trivial joining.
    \end{lemma}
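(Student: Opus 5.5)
The statement to prove is Lemma~\ref{lem:joinings-for-ergodic-and-trivial}: if $\mathsf{Y}_1$ is ergodic and $\mathsf{Y}_2$ is trivial ($T_2$ acts as the identity, or more generally every invariant set is null or conull), then the only joining is the product $\mu_1\times\mu_2$. The plan is to test a joining $\lambda$ on $Y_1\times Y_2$ against products $f\otimes g$ of bounded measurable functions. We then show that $\int f\otimes g\,d\lambda=\int f\,d\mu_1\int g\,d\mu_2$. Since finite linear combinations of such products generate the product $\sigma$-algebra, this identifies $\lambda$ with $\mu_1\times\mu_2$.

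First we fix $g\in L^\infty(\mu_2)$ with $g\geq 0$ and $\int g\,d\mu_2>0$. We define a measure $\lambda_g$ on $Y_1$ by $\lambda_g(B)=\int \mathbf 1_B\otimes g\,d\lambda$. By the marginal condition, $\lambda_g\leq \|g\|_\infty\mu_1$, so $\lambda_g$ is absolutely continuous with respect to $\mu_1$, with density $h_g\in L^\infty(\mu_1)$.

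The key step is $T_1$-invariance of $\lambda_g$. Because $\lambda$ is $T_1\times T_2$-invariant and $g\circ T_2=g$ $\mu_2$-a.e. (triviality), we get
\[
\lambda_g(T_1^{-1}B)=\int (\mathbf 1_B\circ T_1)\otimes (g\circ T_2)\,d\lambda=\lambda_g(B).
\]
Here the replacement of $g\circ T_2$ by $g$ is justified since $\lambda$ projects to $\mu_2$, so $\mu_2$-null sets are $\lambda$-null. Thus $h_g$ is a $T_1$-invariant function in $L^1(\mu_1)$, and ergodicity forces $h_g$ to be a.e. constant. Taking total mass, this constant equals $\lambda_g(Y_1)=\int g\,d\mu_2$. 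Hence $\int f\otimes g\,d\lambda=\int f\,d\mu_1\int g\,d\mu_2$, which extends by linearity to all bounded $g$.

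If ``trivial'' is meant only in the measure-theoretic sense (every invariant set has measure $0$ or $1$), the main obstacle is the claim $g\circ T_2=g$. I would handle it by observing that in that case the invariant $\sigma$-algebra, and indeed all of $\mathcal B_2$ modulo null sets, is trivial. Then every $g$ is a.e. constant and the conclusion is immediate. For the intended use, $\mathsf Y_2$ carries the identity action, so the argument above applies directly.
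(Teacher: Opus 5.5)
The paper gives no proof of its own here; it refers to \cite[Lemma 7.2]{aka_equidistribution_2021}. Your main argument is a correct, self-contained proof of the intended statement, where ``trivial'' means $T_2=\mathrm{id}$, or more generally $g\circ T_2=g$ $\mu_2$-a.e.\ for every bounded $g$. That is how the lemma is used for the factor $\XX_{p,2}$ in Lemma~\ref{lem:forget-second}.

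Testing $\lambda$ against $\mathbf{1}_B\otimes g$ and observing that a $T_1$-invariant measure dominated by $\|g\|_\infty\mu_1$ must be a multiple of $\mu_1$ is the standard argument. The other common route disintegrates $\lambda$ over $Y_2$ and uses extremality of $\mu_1$; your version avoids that disintegration, and with it any standard Borel assumption.

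One step deserves a line. Passing from $T_1$-invariance of $\lambda_g$ to $h_g\circ T_1=h_g$ is immediate if $T_1$ is invertible, as it is in the application. In general, note that $h_g\in L^2(\mu_1)$ is fixed by the adjoint $U^*$ of the isometry $Uf=f\circ T_1$. Hence $\|Uh_g-h_g\|_2^2=2\|h_g\|_2^2-2\langle h_g,U^*h_g\rangle=0$.

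Your last paragraph, however, is wrong and should be dropped. ``Every invariant set has measure $0$ or $1$'' is the definition of ergodicity, not triviality. It does not make $\mathcal B_2$ trivial modulo null sets, and under that reading the lemma is false. For example, the diagonal self-joining of an irrational circle rotation is a non-product joining of two ergodic systems.

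The correct measure-theoretic version of ``trivial'' is $\mu_2(T_2^{-1}C\,\triangle\,C)=0$ for every $C\in\mathcal B_2$. Your main argument already covers this case verbatim, since it only uses $g\circ T_2=g$ $\mu_2$-a.e.
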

    
    \begin{proof}[Proof of Lemma~\ref{lem:forget-second}]
    By assumption $\mu$ is a joining of the Haar measures for the action of $\{(a_s,a_{2s},a_{2s}):s\in \RR\}$ on $\XX_p'$ and the trivial action on $\XX_{p,2}$. The former action is ergodic by a theorem of Howe--Moore and, thus, the lemma follows from Lemma~\ref{lem:joinings-for-ergodic-and-trivial}.
    \end{proof}

    In the remainder of the proof of Theorem~\ref{thm:dynamical-version} we show that $\mu'$ is the Haar probability measure on $\XX_p'$.
    We begin by exhibiting an additional diagonalizable invariance at the prime $p$ (so that $\mu$ is invariant under a higher-rank diagonalizable action).

    \begin{lemma}\label{prop:split-torus}
    Let $q$ be a binary quadratic form over $\ZZ_p$ so that $-\disc(q)$ is a non-zero square modulo $p$. Then $\Spin_q$ is split over $\QQ_p$. 
    \end{lemma}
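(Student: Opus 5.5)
The plan is to use the explicit description of the spin group of a binary quadratic space from Remark~\ref{rem:special-ClGr}. Write $q(x,y)=ax^2+bxy+cy^2$ on $\ZZ_p^2$ with basis $v_1,v_2$, and set $\disc(q)=ac-b^2/4$ (the convention used throughout, so $\disc(q)=\det$ of the Gram matrix with off-diagonal entries $\bQ/2$). The even Clifford algebra $\Cle[(\QQ_p^2,q)]$ is two-dimensional, spanned by $1$ and $\kappa=[v_1,v_2]$. By the computation \eqref{eq:norm-Klein-vector} we have $\kappa^2=-4\disc(q)$, and $\stdinv(\kappa)=-\kappa$. So for $\KK$ a field containing $\QQ_p$,
\begin{align*}
\Cle[(\KK^2,q)]\simeq\KK[t]/(t^2+4\disc(q)),\qquad \ClNrm(a+b\kappa)=a^2+4\disc(q)b^2 .
\end{align*}
It follows that $\Spin_q$ is the norm-one torus of the quadratic $\QQ_p$-algebra $E=\QQ_p[t]/(t^2+4\disc(q))$. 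Note that in dimension two every norm-one element of the even Clifford algebra lies in the special Clifford group, since every even element normalizes $\V$; this is the analogue of Lemma~\ref{lem:norm-implies-ClGr}.

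The key step is to show that $E$ splits. The hypothesis says that $-\disc(q)$ is a non-zero square modulo $p$ with $p$ odd. By Hensel's lemma, $-\disc(q)=u^2$ for some $u\in\ZZ_p^\times$, and then $-4\disc(q)=(2u)^2$. Hence $t^2+4\disc(q)=(t-2u)(t+2u)$ with distinct roots, since $4u\neq0$. So $E\simeq\QQ_p\times\QQ_p$ via $a+b\kappa\mapsto(a+2ub,\,a-2ub)$. Under this isomorphism the norm becomes $(x,y)\mapsto xy$, so the norm-one torus is $\{(x,x^{-1})\}\simeq\GG_m$ over $\QQ_p$, which is split.

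There is no serious obstacle here. The only points needing care are the normalization of $\disc$ (so that the relevant square is $-\disc(q)$ and not $\disc(q)$) and the identification of $\Spin_q$ with the norm-one torus as algebraic groups, not merely on $\QQ_p$-points. For the latter I would write both sides as subschemes of $\mathbf{R}_{E/\QQ_p}(\GG_m)$ cut out by the same equation $\ClNrm=1$, and check that the isomorphism $E\otimes A\simeq A\times A$ is functorial in the $\QQ_p$-algebra $A$.
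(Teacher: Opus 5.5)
Your proof is correct and is essentially the paper's argument. The paper also only applies Hensel's lemma: it phrases the conclusion as isotropy of $q$ over $\QQ_p$, which for a binary form is equivalent to $-\disc(q)\in(\QQ_p^\times)^2$, and leaves the step from isotropy to splitness of $\Spin_q$ implicit. Your write-up makes that step explicit by identifying $\Spin_q$ with the norm-one torus of $\QQ_p[t]/(t^2+4\disc(q))$, as in Remark~\ref{rem:special-ClGr}.
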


    \begin{proof}
    By an application of Hensel's lemma (see e.g.~\cite[Ch.~4]{cassels2008rational}), $q$ is isotropic over $\QQ_p$.
    \end{proof}
    
    Recall the definition of \highlight{class-$\mathcal{A}'$} actions from~\cite[Def.~1.3]{einsiedler_joinings_2019}.

    \begin{lemma}\label{lem:class-A-action}
        If $D_n$ is a non-zero square mod $p$ for all $n$, there is a homomorphism $\iota_p:\ZZ \to \GG'(\QQp)$ of class-$\mathcal{A}'$ whose image acts ergodically with respect to the Haar measure on $\XX_p'$ and preserves $\mu'$.

        If $-D_n$ is a non-zero square mod $p$ for all $n$, there is a homomorphism $\iota_p:\ZZ \to \GG'(\QQp)$ so that the projection of the image to $(\GG_1 \times \GG_4)(\QQ_p)$ is of class-$\mathcal{A}'$ and acts ergodically with respect to the Haar measure on $\XX_{p,1}\times\XX_{p,4}$, the projection of the image to $\GG_3(\QQ_p)$ is trivial, and $\iota_p(\ZZ)$ preserves $\mu'$.
	\end{lemma}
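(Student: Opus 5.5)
The plan is to find, inside the $p$-adic points of the stabilizer torus, a single element that generates the required $\ZZ$-action, and to push it through $\Theta_{\L_n}$. Since $\mu_{\L_n}$ is the pushforward of the Haar measure on $\GG(\QQ)\TT_{\L_n}(\Adeles)y_{\L_n}$, it is invariant under right multiplication by $y_{\L_n}^{-1}\TT_{\L_n}(\QQ_p)y_{\L_n}$. Here $y_{\L_n}$ is only real, so in the $p$-adic component this is simply $\TT_{\L_n}(\QQ_p)$. The difficulty is that $\TT_{\L_n}$ varies with $n$. So I first conjugate into a fixed torus by a bounded $p$-adic element, and choose $\iota_p(1)$ fixed.

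\textbf{Step 1: splitting.} Take the case $D_n$ a nonzero square mod $p$. Then $-\disc(q_{\oL_n}) = D_n$ is a square, so by Lemma~\ref{prop:split-torus} the torus $\HH_{q_{\oL_n}}$ is $\QQ_p$-split. Likewise $\sqrt{-D}\in \QQ_p(i)$ (or $\sqrt{D}$, since $-1$ is a square or not; in both cases $\QQ_p(i,\sqrt{D})\otimes$ splits over each place above $p$ of $\QQ(i)$, as $D$ is a unit square mod $p$ and Hensel applies). Using \eqref{eq:stab-as-alg}, $\HH_{\L_n}$ and $\HH_{q_{\Lambda_{\L_n}}}$ are then split over $\QQ_p(i)$ at every place, that is, $\QQ_p$-split of rank $2$ in $\Res{\QQi}(\SL_2)$.

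\textbf{Step 2: uniform conjugation.} Since $D_n$ is a $p$-adic unit, the plane $\L_n\otimes\ZZ_p$ and its complement are unimodular and split (hyperbolic). By Witt's theorem over $\ZZ_p$ for unimodular lattices, there is $k_{n,p}\in\SpinV(\ZZ_p)$ carrying a fixed split reference plane to $\L_n\otimes\QQ_p$. For each factor I similarly obtain elements $c_{n}=(c_{n,1},c_{n,3},c_{n,4})\in\GG'(\ZZ_p)$ such that $c_n^{-1}\Theta_{\L_n}(\HH_{\L_n})(\QQ_p)c_n$ equals a fixed split torus $\TT_p$, and $c_n^{-1}\Theta_{\L_n}c_n$ is a fixed homomorphism up to finitely many choices. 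Passing to a subsequence, $c_n\to c\in \GG'(\ZZ_p)$. Then choose $a\in \HH_{\L_0,p}$ diagonal with $p$-adic eigenvalues $p^{\pm1}$ at each place. Its image under the fixed map $(h,\theta_{\oL},\theta_{\Lambda})$ has components $\operatorname{diag}(p,p^{-1})$-type, up to squares by Lemmas~\ref{lem:large-packets} and \ref{lem:image-of-third-morphism}, so I take $a^2$ if needed. Set $\iota_p(1)= c\,\Theta(a)\,c^{-1}$. The invariance of $\mu'$ then follows from the invariance of $\mu'_{\L_n}$ under $c_n\Theta(a)c_n^{-1}$ and the continuity of the action as $c_n\to c$.

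\textbf{Step 3: class-$\mathcal{A}'$ and ergodicity.} An element in each factor that is $\QQ_p$-diagonalizable with eigenvalues integral powers of $p$ is class-$\mathcal{A}'$ in the sense of \cite[Def.~1.3]{einsiedler_joinings_2019}. Each component is non-trivial, since the components are nonzero powers of $p$. Ergodicity on $\XX_p'$ holds because every factor group is simply connected with a noncompact $\QQ_p$-simple component, so Howe–Moore plus strong approximation give mixing for unbounded elements. In the case $-D_n$ a square, $\HH_{q_{\oL}}$ is $\QQ_p$-anisotropic (compact), so I project $\iota_p$ trivially to $\GG_3$. Concretely, I choose $a$ in the pointwise stabilizer of $\oL$, using Corollary~\ref{cor:stabilizer-subgroup}, and only the first and fourth components need to be split. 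This works because $\QQ_p(i,\sqrt{D})$ still splits over $\QQ_p(i)$ when $-D$ is a square and $-1$ is a square, or when the combination yields a square. I expect the main obstacle to be this case analysis of splitting for $\GG_1,\GG_4$ when $p\equiv 3 \bmod 4$, together with the uniform $\ZZ_p$-conjugation of Step 2.
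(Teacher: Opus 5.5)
Your architecture is the paper's. Invariance of $\mu_{L_n}$ under $\TT_{L_n}(\QQ_p)$ is passed to $\mu'$ by a $p$-adic compactness and continuity argument, and one then takes an unbounded cyclic subgroup of a $\QQ_p$-split rank-one subtorus. Your Step~2 is a valid but heavier substitute for the paper's device. You conjugate every $L_n\otimes\ZZ_p$ to a fixed reference lattice by Witt's theorem over $\ZZ_p$, up to a spinor-norm adjustment to land in $\SpinV(\ZZ_p)$. This works because $p\nmid D_n$ and the square class of $D_n$ in $\ZZ_p^\times$ is constant in each case. The paper instead passes to a subsequence along which the integral basis matrices converge in $\SL_4(\ZZ_p)$ and $\SL_3(\ZZ_p[i])$, and uses the limit plane $E_p$.

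\textbf{The gap: the choice of $a$ in the first case.} ``Diagonal with eigenvalues $p^{\pm1}$ at each place'' does not specify which subtorus of $\HH_L$ the element lies in. Your claim that every component is non-trivial ``since the components are nonzero powers of $p$'' does not follow. By Corollary~\ref{cor:stabilizer-subgroup}, $\beta\beta'$ acts on $L^\perp$ through $\ClNrm(\beta)\beta'^2$. So if $a\in\HH^{pt}_{L^\perp}$, its $\GG_3$-component $\theta_{L^\perp}(a)$ is trivial, and the action on $\XX_{p,3}$, hence on $\XX'_p$, is not ergodic.

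This really happens. For $p\equiv 1\bmod 4$ one has $\GG_1(\QQ_p)\cong\SL_2(\QQ_p)^2$, acting on $V\otimes\QQ_p\cong\mathrm{M}_2(\QQ_p)$ by $x\mapsto g_1xg_2^{-1}$, with $L_0\otimes\QQ_p$ the diagonal matrices. The element $a=(\mathrm{diag}(p,p^{-1}),\mathrm{diag}(p^{-1},p))$ fits your description but fixes $L_0^\perp$ (the off-diagonal matrices) pointwise.

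\textbf{The fix.} Take $a$ in the pointwise stabilizer $\HH^{pt}_{L}\cong\Spin_{q_{L^\perp}}$. This torus is $\QQ_p$-split by Lemma~\ref{prop:split-torus}, because $-\disc(q_{L^\perp})=D_n$. For unbounded $a=\beta'\in\Spin_{q_{L^\perp}}(\QQ_p)$, the three components are $\beta'$ itself and the actions of $\beta'^2$ on $L^\perp$ and on $\Lambda_L$, all unbounded. This is exactly the paper's choice. Your appeal to Lemmas~\ref{lem:large-packets} and~\ref{lem:image-of-third-morphism} (``up to squares, take $a^2$'') does not address this point. Those lemmas say the images of the $\theta$'s contain (or are) the squares; they say nothing about the eigenvalues of the image of a given element.

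\textbf{Incorrect splitting statements.} Several of your splitting claims are false and should be corrected.
\begin{itemize}
\item In the first case with $p\equiv 3\bmod 4$, $-D_n$ is a non-square. So $L_n\otimes\QQ_p$ is anisotropic, not hyperbolic, and $\HH^{pt}_{L_n^\perp}(\QQ_p)$ is compact. Also $\HH_{L_n}$ has $\QQ_p$-rank $1$, not $2$: $p$ is inert in $\QQ(i)$ and $\mathrm{Res}_{\QQ_p(i)/\QQ_p}\mathbb{G}_m$ has $\QQ_p$-rank one.
\item In the second case with $p\equiv 1\bmod 4$, $\HH_{q_{L^\perp}}$ is split, not anisotropic.
\end{itemize}
These errors do not damage Step~2, since Witt's theorem only needs a fixed reference lattice in the right isometry class. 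In the second case your choice $a\in\HH^{pt}_{L^\perp}$ is the correct one.

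The ``main obstacle'' you anticipate there does not arise. We have $\HH^{pt}_{L^\perp}\cong\Spin_{q_L}$, and $-\disc(q_L)=-D_n$ is a nonzero square mod $p$. So Lemma~\ref{prop:split-torus} gives splitness directly, with no case analysis of $\QQ_p(i,\sqrt{D})$ according to $p\bmod 4$.
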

	
	\begin{proof}
    We show invariance under a diagonally embedded stabilizer group of a $p$-adic plane $E_p$, where in the second case of the lemma this group acts trivially on the third factor.
    
	Recall the choices $\BT{\L_n},\BTo{\oL_n}\in\SL_4(\ZZ)$ and $\BToK{\L_n}\in\SL_3(\ZZ{[i]})$ from \S\ref{sec:shapes}. 
    Since $\SL_4(\ZZp)$ and $\SL_3(\ZZp{[i]})$ are compact, we may assume that $\BT{\L_n}\!\to\!\operatorname{M}\in\SL_4(\ZZp)$, $\BTo{\oL_n}\!\to\orth{\operatorname{M}}\in\SL_4(\ZZp)$ and $\BToK{\L_n}\!\to\operatorname{N}\in\SL_3(\ZZp{[i]})$ after passing to a subsequence.
    Let $E_p$ the oriented $\QQp$-plane spanned by the first two rows of $\operatorname{M}$.
    In the first case of the lemma, let $q$ be the quadratic form on the $\ZZ_p$-span of the first two rows of $\orth{\operatorname{M}}$ (these span the complement of $E_p$) and in the second case, let $q$ be the quadratic form on the $\ZZ_p$-span of the first two rows of $\operatorname{M}$. 
    In either case, $-\disc(q)$ is a non-zero square modulo $p$ by assumption.
    By \Cref{prop:split-torus}, we have that $\HH^{pt}_{E_p}\simeq \Spin_q$, respectively $\HH^{pt}_{E_p^\perp}\simeq \Spin_q$ is split over $\QQ_p$.

    Set
    \begin{align*}
    \theta_{E_p^\perp}(h)= \pi_{\lowerright}(\operatorname{M}^\perp[\Adj_h](\operatorname{M}^\perp)^{-1}),\quad
    \theta_{\Lambda_{E_p}}(h) = \pi_{\slowerright}(\operatorname{N}[[\Adj_h]]\operatorname{N}^{-1}).
    \end{align*}
    By construction, $\mu'$ is invariant under $\Theta_{E_p}'(h) \defeq (h,\theta_{E_p^\perp}(h), \theta_{\Lambda_{E_p}}(h))$ for every $h \in \HH_{E_p}(\QQp)$.
    In the first, respectively second, case of the lemma, we take $\Theta_{E_p}'$ restricted to an unbounded cyclic subgroup of $\HH^{pt}_{E_p}(\QQ_p)$, respectively $\HH^{pt}_{E_p^\perp}(\QQ_p)$.
    With these choices, the lemma follows.
	\end{proof}
        
	\begin{proof}[Proof of \Cref{thm:dynamical-version}]
        If $-D_n$ is a non-zero square modulo $p$ for all $n$, we may apply Lemma~\ref{lem:joinings-for-ergodic-and-trivial} one more time and reduce the theorem to proving that the pushforward of $\mu$ to the product of the first two factors is the corresponding Haar measure.
        The following proof considers the case where $D_n$ is a non-zero square modulo $p$ for all $n$; the complementary case is completely analogous.
    
		We want to apply the joining classification \cite[Thm.~1.4]{einsiedler_joinings_2019} to $\mu'$. 
        As $\GG'$ is semisimple and simply connected, $\XX'_p$ is saturated by unipotents in the sense of \cite[Def.~1.1]{einsiedler_joinings_2019}. 
        That is, the group generated by all unipotent elements of $\GG'(\RR\times\QQ_p)$ acts ergodically on $\XX'_p$.
		
		We decompose $\mu'$ into ergodic components for the $\ZZ^2$-action given by $\iota:(m,n)\mapsto ((a_m,a_{2m},a_{2m}),\iota_p(n))$ (see~\Cref{lem:class-A-action}) and show that almost all of these ergodic components are the trivial joining for this action. Let $\nu$ be an ergodic component of $\mu'$. 
        By \cite[Cor.~1.5]{einsiedler_joinings_2019}, if the projection $\nu_{j,k}$ of $\nu$ to the $j$-th and $k$-th coordinate is equal to the Haar measure $m_{X_j}\times m_{X_k}$ for all $j<k$ with $j,k\in\{1,3,4\}$, then $\nu = m_{\XX'_p}$.
		Define $\GG_{jk} \defeq \GG_j\times\GG_k$ for $j,k\in\{1,3,4\}$ and set $G_{jk} \defeq \GG_{jk}(\RR\times\QQ_p)$ and $\Gamma \defeq \GG_{jk}(\ZZ[\tfrac{1}{p}])$.
        Applying \cite[Thm.~1.4]{einsiedler_joinings_2019} to $\nu_{j,k}$ yields that $\nu_{j,k}$ is algebraic and defined over $\QQ$, that is, there exists a $\QQ$-algebraic group $\mathbf{M}\leq \GG_{jk}$ and a finite-index subgroup $N\leq\mathbf{M}(\RR\times\QQ_p)$ such that $\nu_{j,k}$ is the normalized Haar measure on a single orbit $\Gamma Ng$ for some $g\in G_{jk}$. 

		In case that $\mathbf{M} = \GG_{jk}$, $G_{jk}$ does not contain any proper finite-index subgroup since $\Res{\QQi}(\SL_2)$ and $\SL_2$ are simply connected.
        Thus, $N = G_{jk}$ and $\nu_{j,k}$ is the trivial joining.
        
		Assume now that $\mathbf{M}$ is a proper subgroup of $\GG_{jk}$. Since $\nu_{j,k}$ is a joining and $\GG_j$ and $\GG_k$ are simply connected, $\mathbf{M}$ projects surjectively to $\GG_j$ and $\GG_k$, that is, $\mathbf{M}$ is the graph of some $\QQ$-isomorphism $\psi:\GG_j\to \GG_k$ by a Goursat-type lemma. Thus, the cases $(j,k) \in\{(1,3),(3,4)\}$ immediately lead to a contradiction.
		Assume $(j,k) = (1,4)$. Then $\GG_j = \GG_k = \Res{\QQi}(\SL_2)$ and $N = \psi(\GG_{j}(\RR\times \QQ_p))$.
        We know that $N_\infty$ contains $\{(a_s,a_{2s})\colon s\in\RR\}$. 
        However, $(a_s,a_{2s})$ cannot lie on the graph of the isomorphism $\psi$, since the eigenvalues of $a_s$ and $a_{2s}$ do not coincide. Hence, such an isomorphism $\psi$ cannot exist. This concludes the proof of \Cref{thm:dynamical-version}.
	\end{proof}
    
    \bibliographystyle{amsplain}
    \bibliography{database}
\end{document}